\newtheorem{theorem}{Theorem}
\newtheorem{definition}[theorem]{Definition}
\newtheorem{lemma}[theorem]{Lemma}
\newtheorem{remark}[theorem]{Remark}
\newtheorem{proposition}[theorem]{Proposition}
\newtheorem{problem}{Problem}
\newcommand{\Var}{\text{Var}}
\newcommand{\as}[1]{\text{as } #1 \rightarrow\infty}
\renewcommand{\d}{\, \text{d}}
\newcommand{\s}[2]{(#1_#2)_{#2 \geq 1}}
\newcommand{\fceil}[2]{\left \lceil \frac{#1}{#2}  \right \rceil}
\newcommand{\ffloor}[2]{\left \lfloor \frac{#1}{#2}  \right \rfloor}
\newcommand{\F}{\mathcal{F}}
\newcommand{\R}{\mathbb{R}}
\newcommand{\Q}{\mathbb{Q}}
\newcommand{\Z}{\mathbb{Z}}
\newcommand{\T}{\mathbb{T}}
\newcommand{\N}{\mathbb{N}}
\renewcommand{\P}{\mathbb{P}}
\renewcommand{\to}{\rightarrow}
\title[Trimmed ergodic sums over rotations]{Trimmed ergodic sums for non-integrable functions with power singularities over irrational rotations}
\begin{document}

\title[Trimmed ergodic sums over rotations]{Trimmed ergodic sums for non-integrable functions with power singularities over irrational rotations}

\author{M. Auer}
\address{Department of Mathematics \\ University of Maryland College Park, USA}
\email{\href{mailto:mauer96@umd.edu}{mauer96@umd.edu}}

\author{T. I. Schindler}
\address{Faculty of Mathematics and Computer Science, Jagiellonian University \newline
\indent ul.~ {\L}ojasiewicza 6, 30-348 Krakow, Poland \newline 
\indent and \newline 
\indent Department of Mathematics and Statistics,
University of Exeter \newline
\indent Exeter EX4 4QF, United Kingdom
}
\email{\href{mailto:tanja.schindler@uj.edu.pl}{tanja.schindler@uj.edu.pl}}
\email{\href{mailto:t.schindler@exeter.ac.uk}{t.schindler@exeter.ac.uk}}

\thanks{The authors would like to thank Dmitry Dolgopyat and Adam
Kanigowski for helpful discussions and the Erwin Schr\"odinger
International Institute for Mathematical Physics in Vienna where this
work was initiated for their hospitality.
This project was supported by a grant from the priority research area SciMat under 
the Strategic Programme Excellence Initiative at Jagiellonian University (TS), and by the
European Union’s Horizon Europe
research and innovation programme under the Marie Sk{\l}odowska-Curie grant
agreement ErgodicHyperbolic - 101151185 (TS)}
\date{}
\subjclass{Primary: 37A50, 60F15, 37A44 ; Secondary: 11K55, 60G70.}
\keywords{ergodic theory, trimmed sum, law of large numbers, almost sure convergence, irrational rotations, continued fractions}
\maketitle
\begingroup
\leftskip4em
\rightskip\leftskip
\begin{small}
\paragraph*{Abstract}

Studying Birkhoff sums of non-integrable functions involves the challenge of large observations depending on the sampled orbit, which prevents pointwise limit theorems. To address this issue, the largest observations are removed, this process is commonly known as trimming. While this method is well studied for independent identically distributed sequences and systems with strong mixing behaviour, this paper focuses on irrational rotations of $\T$. In this setting we establish trimmed weak and strong laws for the functions $\frac{1}{x}$ and $\frac{1}{x^{\beta}}$ with $\beta>1$, providing explicit conditions on the rotation angle.

\end{small}
\par
\endgroup

\part{Results}

\section{Introduction}

\subsection{Motivation}

Let $(X, T,\mu)$ be a probability preserving ergodic dynamical system. The much celebrated Birkhoff Ergodic Theorem states that, given an integrable function $f\in L^1$, the average of observations over a long time approximates the integral, more formally
\begin{equation*}
    \lim_{N\rightarrow\infty} \frac{S_N(f)}{N} = \int f \d\mu  \;\;\; \text{ almost surely},
\end{equation*}
where $S_N(f)= \sum_{n=0}^{N-1} f \circ T^n$. In this paper, we will investigate the question 
\begin{equation*}
    \text{How do we describe asymptotics of } S_N(f) \text{ if } \int |f| \d\mu =\infty?
\end{equation*}

For simplicity, we shall restrict ourselves to functions that are non-negative and finite almost everywhere, denote the collection of these functions by
\begin{equation*}
\F=\{f:X\rightarrow [0,\infty] \text{ measurable} \;|\; f\not \in L^1(\mu), \text{ but } \mu(f=\infty)=0\}.
\end{equation*}
Often, in the situation when $X$ is a metric space, we will also require that $f\in \F$ is continuous\footnote{In this case we endow $[0,\infty]$ with the metric $d(s,t)=|e^{-s}-e^{-t}|$, for $s,t\in [0,\infty]$, making it a compact metric space. Note that $f$ is continuous if and only if, for every $s\in[0,\infty)$, the sets $\{f<s\}$ and $\{f>s\}$ are open.}.

The most naive way to approach this problem would be to normalize by a factor different from $N$, say $d_N>0$. However, Aaronson's "anti-convergence" result (\cite{Aaronson1977OnTE}) asserts that this is impossible: for every normalising sequence $\s{d}{N}$ it holds that
\begin{equation*}
     \limsup_{N\rightarrow\infty} \frac{S_N(f)}{d_N} = \infty \; \text{ or }\;  \liminf_{N\rightarrow\infty} \frac{S_N(f)}{d_N} =0 \;\;\;\text{ almost surely},
\end{equation*}
meaning that we either underestimate or overestimate the sum infinitely often. This non-convergence is caused by large observations, meaning $n$ such that $f(T^n x)$ is big, which may or may not occur depending on $x$. To circumvent this problem, a common method is to exclude the largest observations altogether. This method is referred to as trimming. 

More formally, for $N\geq 1$ and $0\leq k(N) \leq N$, define the \textit{trimmed sum} as
\begin{equation*}
    S_N^{k(N)}(f)(x)=\sum_{n=0}^{N-1} f(T^n x) - \max_{0\leq n_1 < n_2 <...< n_{k(N)} \leq N-1} \sum_{j=1}^{k(N)} f ( T^{n_j} x).
\end{equation*}
We will also refer to $k(N)$ as the \textit{trimming sequence}. For simplicity set $S_N^M(f)=0$ if $M>N$.

The central problem we study is the following;
\begin{problem}\label{p1} 
Do there exist positive $k(N)=o(N)$ and $d_N>0$ such that 
\begin{equation}\label{strongtrimdef}
\lim_{N\rightarrow \infty} \frac{S_N^{k(N)}(f)(x)}{d_N} = 1 \;\;\; \text{for $\mu$-a.e. } x\in X?
\end{equation}
\end{problem}

If \eqref{strongtrimdef} holds, we refer to it as a trimmed strong law.

\begin{remark}\label{p1rem}
Note that $S_N^N(f)=0$ by definition. Furthermore, heuristically speaking, if $c\in (0,1)$, $f$ is continuous and $k(N)\sim c N$ (i.e.\ $\lim_{N\to\infty} k(N)/(cN)=1$) then
\begin{equation*}
S^{k(N)}_N (f) \sim S_N( \min(K,f))\text{ almost surely,}
\end{equation*}
where $K>0$ is such that $\mu(f>K)=c$.

This explains why we choose $k(N) = o(N)$ in Problem \ref{p1}; if $k(N) \sim cN$ then we lose information on $f$, making $S_N^{k(N)}(f)$ uninteresting.
\end{remark}

\subsection{Background}\label{backsec}

Ideally, one would hope for\footnote{By the above result by Aaronson, if $\int |f| \d\mu =\infty$, which is the case that we are concerned with, then strong laws with $k(N)=0$ are not possible.} $k(N)=constant$, this is referred to as \textit{light trimming}. On the other hand, situations where $k(N)\to\infty$, but $k(N)=o(N)$ as $N\to\infty$ are referred to as \textit{intermediate trimming}.

Trimming is well-studied in the set-up of identically distributed random variables\footnote{Meaning that $f \circ T^n = X_n$ is an independent and identically distributed random process.} (iids). The literature is vast, for brevity's sake, we will only mention situations where explicitly strong laws (or results having immediate consequences for strong laws) are studied. Trimming for iids has been used in other contexts, for example, weak laws, laws of iterated logarithm, CLTs, and more.

Mori (\cite{morilighttrim, mori2}) developed general criteria for lightly trimmed strong laws to hold, his results were later generalized by Kesten and Maller (\cite{kesten1992ratios, km2, mallerstable}). 

From Mori's works (\cite{morilighttrim} gives necessary and sufficient conditions for certain lightly trimmed laws to hold) it already becomes apparent that light trimming is not always sufficient. 
A fortiori Kesten (\cite{Kesten_1993}) showed later that even a lightly trimmed weak law already necessitates an untrimmed weak law (which might not hold). 

A special case, that is of interest, is when\footnote{Here $X$ is a random variable having the same distribution as $f$.} $X$ has regularly varying tails with index $\alpha \in (0,1)$. This means that there is a slowly varying function\footnote{A function $L:[0,\infty) \to\R$ is called \textit{slowly varying} if, for every $c>0$, it holds that $\lim_{x\to\infty} \frac{L(cx)}{L(x)} = 1$.} such that $\P(X>t) = t^{-\alpha} L(t)$. Note that in this situation\footnote{As can be deduced from \cite[VII.7 Theorem 2]{feller1957introduction}.} there is no untrimmed weak law, and hence, by Kesten's result, also no lightly trimmed weak (or strong) law. In this case, a law of the iterated logarithm under trimming was obtained by Haeusler and Mason (\cite{Haeusler_Mason_1987}). Their result was shown to be optimal by Haeusler (\cite{HaeuslernonstandardLIL}). From those results, a strong law under intermediate trimming can be deduced. 

In the context of dynamical systems, so far results have been shown only for systems exhibiting strong mixing properties. Some known results are:
\begin{itemize}
\item If $T$ is the Gauss map and $f(x)=\frac{1}{x}$ (in the case $S_N$ is essentially the sum of coefficients in the CFE\footnote{The continued fraction expansion (CFE) shall be defined later in \S \ref{rotsec}.} of $x$) then it was proved\footnote{They study a situation where the trimming sequence $k(N)$ is allowed to depend on $x$. More explicitly, they show a strong law for
\begin{equation*}
S_N(f)(x) - \theta(N,x) \max_{n=0,...,N-1} f(T^n x) \;\;\; \text{ for some } \; \theta(N,x)\in[0,1].
\end{equation*}
However a statement about $S^1_N$ can be easily deduced from their proof.} by Diamond and Vaaler (\cite{DIAMOND1986}) that $\frac{S_N^1(f)}{N\log(N)}\rightarrow \frac{1}{\log(2)}$. 
\item Even though not formulated in terms of dynamical systems, Aaronson and Nakada (\cite{AARONSONpsi}) showed lightly trimmed strong laws (extending Mori's results) for $\psi$-mixing random variables 
\footnote{For a precise definition of various mixing properties, see \cite{Bradleymixing}.} with speed $\sum_{n=1}^{\infty} \frac{\psi(n)}{n} <\infty$. For Example, Gibbs-Markov maps, together with a function that is measurable for the partition, are exponentially $\psi$-mixing (this follows from \cite{Adpsimix}). Their results apply (among others) if $\P(X>t)=\frac{1}{t}$ for $t>0$. In terms of functions, it applies to $f=\frac{1}{x}$ (with Lebesgue measure).
\item Haynes (\cite{HAYNES_2012}) later showed a quantitative version of the above. Under the same assumptions he showed a strong law for
\begin{equation*}
S_N(f)(x) - \delta(N,x) \max_{n=0,...,N-1} f(T^n x) \;\;\; \text{ for some } \; \delta(N,x)\in\{0,1\},
\end{equation*}
where $(f(T^n x))$ is a $\psi$-mixing process with a certain speed, and provided explicit error terms. We remark that, in contrast to Aaronson and Nakada's result, the number of trimmed terms is allowed to depend on $x$.
\item Kesseb\"ohmer and Schindler (\cite{KESSEBOHMER20194163}) studied intermediate trimming for maps satisfying a spectral gap condition. This method applies to some expanding interval maps (more general than Gibbs-Markov). Their results apply to functions like $f(x)=\frac{1}{x^{\beta}}$ with $\beta>1$ (with Lebesgue measure).
\item Schindler recently proved (\cite{Schindler2018TrimmedSF}) a strong law for the doubling map and the function $f=\frac{1}{x}$. In this situation\footnote{This highlights the influence of periodic points; the function $\frac{1}{x}$ has its singularity at the fixed point $0$.} there is no lightly trimmed strong law as already noted by Haynes (\cite{HAYNES_2012}). Instead, an intermediately trimmed strong law strong law is obtained.
\end{itemize}

\subsection{General results}
Our first result answers Problem A. We show that it is always possible to find a good trimming sequence $k(N)=o(N)$. The following result seems to be known to the experts, for the convenience of the reader we shall provide a proof in \S \ref{gensec}.

\begin{theorem}\label{genthm}
Let $(X, T, \mu)$ be a probability preserving ergodic system. Then, for $f\in \F$ there exist $k(N)\in \N$ with $k(N)=o(N)$ and $d_N>0$ such that 
\begin{equation}\label{genthmclaim}
\lim_{N\rightarrow \infty} \frac{S_N^{k(N)}(f)(x)}{d_N} = 1 \;\;\; \text{for $\mu$-a.e. } x\in X.
\end{equation}
Furthermore, if $(X, T, \mu)$ is uniquely ergodic (and $\mu$ is regular) and $f$ is continuous, then $k(N)$ and $d_N$ can be chosen such that convergence in \eqref{genthmclaim} holds uniformly\footnote{For definiteness, whenever we talk about uniform convergence we replace $f$ by $\hat{f}$ with $\hat{f}=f$ on $\{f<\infty\}$ and $\hat{f}=0$ otherwise.} at all $x\in X$. 
\end{theorem}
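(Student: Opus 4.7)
The plan is to reduce the trimmed sum to a Birkhoff sum of a truncated function. I would first pick an increasing sequence $K_j \uparrow \infty$ whose tail masses $c_j := \mu(\{f > K_j\})$ tend to $0$; this is possible because $\mu(\{f = \infty\}) = 0$. For each fixed $j$, $f \wedge K_j$ is bounded, so Birkhoff's theorem yields $N^{-1} S_N(f \wedge K_j) \to a_j := \int (f \wedge K_j)\d\mu$ almost surely, and $N^{-1} N_{K_j}(x,N) \to c_j$, where $N_{K_j}(x,N) := \#\{0 \leq n < N : f(T^n x) > K_j\}$. If the trimming threshold is chosen so that every orbit value above $K_j$ (plus a tiny safety margin) is discarded, the remaining sum becomes close to $S_N(f\mathbf{1}_{\{f \leq K_j\}})$, whose mean is $b_j := a_j - K_j c_j$. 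Since $f \notin L^1$, monotone convergence forces $b_j \uparrow \infty$, so $Nb_j$ is the candidate for $d_N$.

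To couple $j$ and $N$, for each $j$ pick $N_j$ so large that the event
$$E_j := \left\{x : \forall N \geq N_j,\; \tfrac{1}{N} S_N(f \wedge K_j) \in a_j(1 \pm \epsilon_j) \text{ and } \tfrac{1}{N} N_{K_j}(x,N) \in c_j(1 \pm \epsilon_j)\right\}$$
has $\mu(E_j^c) \leq 2^{-j}$; this is permitted because both Birkhoff averages converge almost surely. Borel--Cantelli then forces $x \in E_j$ eventually almost surely. With $j(N)$ the largest $j$ with $N_j \leq N$, set
$$k(N) := \lceil N c_{j(N)}(1 + 2\epsilon_{j(N)}) \rceil, \qquad d_N := N b_{j(N)}.$$
The bound $k(N) = o(N)$ follows from $c_{j(N)} \to 0$, and the inequalities in $E_{j(N)}$ force $k(N) \geq N_{K_{j(N)}}(x,N)$ for large $N$, so trimming the top $k(N)$ values removes every orbit value exceeding $K_{j(N)}$. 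A direct bookkeeping then gives the decomposition
$$S_N^{k(N)}(f) = S_N(f\wedge K_{j(N)}) - K_{j(N)} N_{K_{j(N)}}(x,N) - \Delta,\qquad 0 \leq \Delta \leq (k(N) - N_{K_{j(N)}}(x,N))K_{j(N)},$$
and every piece is pinned down by the estimates encoded in $E_j$.

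The main obstacle is calibrating $\epsilon_j$. Dividing the decomposition by $N b_j$ produces a relative error of order $\epsilon_j(1 + K_j c_j / b_j)$, and the ratio $K_j c_j / b_j$ can diverge for very slowly decaying tails (e.g.\ for $\mu(f>t) \sim 1/\log t$ this ratio grows like $\log K_j$). Because Birkhoff's theorem is purely qualitative, nothing prevents $\epsilon_j$ from decaying as slowly as needed at the price of enlarging $N_j$, so the choice $\epsilon_j := j^{-1}(1 + K_j c_j / b_j)^{-1}$ beats any growth of $K_j c_j / b_j$ and forces the relative error to vanish. This is the only place in the argument that demands care; after fixing $\epsilon_j$ the remainder is direct substitution. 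One also arranges $N_j$ large enough that $K_j/(N_j b_j) \to 0$, which kills the additive remainder from the ceiling in $k(N)$.

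For the uniquely ergodic version, choose each $K_j$ to be a continuity point of the distribution of $f$ (all but countably many qualify). Then $f \wedge K_j$ is continuous on $[0,\infty]$, so $N^{-1} S_N(f \wedge K_j) \to a_j$ uniformly by the standard uniquely-ergodic-on-continuous-functions principle; the indicator $\mathbf{1}_{\{f > K_j\}}$ may be sandwiched between continuous functions whose integrals approach $c_j$, yielding uniform control on $N_{K_j}(x,N)/N$. Consequently the sets $E_j$ may be taken to equal $X$ for $N$ beyond a deterministic threshold, so the preceding argument delivers convergence in \eqref{genthmclaim} uniformly in $x \in X$.
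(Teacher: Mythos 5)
Your proposal is correct and follows essentially the same route as the paper's proof: truncate $f$ at levels $K_j$, use Birkhoff's theorem upgraded via an Egorov-type choice of thresholds $N_j$ to good sets with summable complements, apply Borel--Cantelli, and define $k(N)$ blockwise as slightly more than $N\mu(f>K_{j(N)})$ with $d_N$ proportional to $N$ times the truncated mean, treating the uniquely ergodic case by taking $K_j$ with $\mu(f=K_j)=0$ and sandwiching the indicator between continuous functions. The only notable (and welcome) refinement is that you normalize by $b_j=\int f\,\mathbf{1}_{\{f\le K_j\}}\,\text{d}\mu$ rather than by $\int\min(f,K_j)\,\text{d}\mu$ and calibrate $\epsilon_j$ against the ratio $K_jc_j/b_j$, which is exactly the bookkeeping needed to make the error estimate for the trimmed sum close correctly.
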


Note that Theorem \ref{genthm} does only guarantee the existence of some trimming sequence $k(N)=o(N)$, there is no control whatsoever about how fast $\frac{k(N)}{N}$ decays. In fact, as shall be made more precise later in Remark \ref{seq1/xrem}, for the function $\frac{1}{x}$ over a Liouvillean rotation, $\frac{k(N)}{N}$ might decay arbitrarily slowly.

Heuristically speaking it is most desirable to choose $k(N)$ as small as we can; the smaller $k(N)$ is, the more information about $f$ is retained after trimming $k(N)$ terms. Therefore, Problem A should be reformulated as;

\begin{problem}\label{p2}
What is the "smallest" $k(N)$ such that there are $d_N$ for which \eqref{genthmclaim} holds? Furthermore, give an explicit formula for $d_N$.
\end{problem}

This seems to suggest that if a trimmed strong law holds, then it will continue to hold if we trim more terms. However, this is not true, on the contrary;

\begin{theorem}\label{nonupperstablethm}
Let $(X, T, \mu)$ be an aperiodic\footnote{For a periodic ergodic system, i.e.\ a rotation on finitely many elements, the statement of the Theorem is trivial because every finite function is integrable.} probability preserving ergodic system. Then there is a function $f\in \F$ and a trimming sequence $k(N)=o(N)$ such that there are $d_N>0$ so that \eqref{genthmclaim} holds, but there is another trimming sequence $k'(N)=o(N)$ with $k'(N)\geq k(N)$ such that for all normalising $d'_N>0$ there is an $\epsilon>0$ such that
\begin{equation*}
\limsup_{N\to \infty} \mu\left(\left|\frac{S_N^{k'(N)}(f)}{d'_N}-1\right|>\epsilon\right) >0.
\end{equation*}
\end{theorem}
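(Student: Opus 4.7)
The plan is to construct an i.i.d.\ counterexample first, then realize it on the abstract aperiodic ergodic system via Rokhlin's lemma. The i.i.d.\ counterexample exploits a ``two-layer'' atomic distribution where the top layer is Poisson-rare (so it must be trimmed), the second layer is well-concentrated and dominates the mean after trimming (yielding a strong law), but the Gaussian fluctuation of the second-layer count is so large that once we trim into it, no deterministic normalization can absorb the resulting noise.

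\textbf{Step 1 (i.i.d.\ model).} Consider $\nu$ on $[0,\infty)$ with atoms $\nu(\{V_n\}) = p_n$ at $V_n = 2^{n^2}$, $p_n = c \cdot 2^{-n^2/2}$, with leftover mass at $0$. Then $\sum V_n p_n \asymp \sum 2^{n^2/2} = \infty$. Setting $N_n := \lceil 1/p_{n+1}\rceil$, for i.i.d.\ samples $(Y_i)_{i=1}^{N_n}$ the counts $K_m = \#\{i: Y_i = V_m\}$ have means $\lambda_m \approx 2^{((n+1)^2 - m^2)/2}$; in particular $\lambda_{n+1}\approx 1$ (Poisson-rare), $\lambda_n \approx 2^{n+1/2}$ (concentrated since $\sqrt{\lambda_n}/\lambda_n \to 0$), and $\lambda_m$ grows super-exponentially as $m$ decreases.

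\textbf{Step 2 (two trimming sequences).} Take $k(N) \equiv K$ for a large constant $K$ and $k'(N_n) := \lceil \lambda_n\rceil \asymp 2^{n+1/2}$, both $o(N)$ with $k' \geq k$. With $k(N)$, Borel--Cantelli along $(N_n)$ ensures that eventually all samples $\geq V_{n+1}$ are trimmed; the residual is dominated by $V_n K_n$ with std-to-mean ratio $\asymp 2^{-n/2}$, yielding $S_{N_n}^{k(N_n)}/d_{N_n}\to 1$ a.s.\ for $d_{N_n} := V_n \lambda_n$. With $k'(N_n)$, essentially all level-$n$ atoms are trimmed as well: writing
\[
S_{N_n}^{k'(N_n)} = V_n\bigl(K_n - k'(N_n) + K_{n+1}\bigr) + \sum_{m<n} V_m K_m,
\]
the first term has Gaussian-like fluctuation of order $V_n\sqrt{\lambda_n} = 2^{n^2 + n/2 + 1/4}$, which dominates the deterministic lower-level mean $V_{n-1}\lambda_{n-1} \asymp 2^{n^2 + 1}$ by the factor $2^{n/2 - 3/4} \to \infty$. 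By the CLT for binomial counts, $S_{N_n}^{k'(N_n)}/d_{N_n}'$ converges in distribution to a non-degenerate Gaussian for any normalization $d_N'$, preventing convergence in probability to $1$. Hence $\limsup_N \mu(|S_N^{k'(N)}/d_N' - 1|>\epsilon)>0$ for some $\epsilon$.

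\textbf{Step 3 (transfer to $(X,T,\mu)$).} Since $(X,T,\mu)$ is aperiodic ergodic, Rokhlin's lemma provides, for each $n$, a set $B_n$ with $B_n, TB_n, \ldots, T^{H_n-1}B_n$ disjoint and $\mu(X\setminus\bigsqcup_{j<H_n}T^jB_n) < 2^{-n}$, where $H_n$ is chosen with $H_n \gg N_n$. On level $T^jB_n$ define $f$ to take value $V_{m(n,j)}$, where $(m(n,j))_{j<H_n}$ is a fixed realisation of i.i.d.\ samples from $\nu$; do this consistently across $n$ using standard refinement/copying techniques so that $f$ is globally well-defined. Borel--Cantelli over the error sets shows that for $\mu$-a.e.\ $x$ and all large $n$, the orbit $x, Tx, \ldots, T^{N_n-1}x$ lies inside the $n$-th tower with iid-distributed values, so the trimmed-sum analysis of Step~2 transfers verbatim along $(N_n)$, giving the desired $f \in \F$.

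\textbf{Main obstacle.} The hard part is Step~3: constructing $f$ coherently on nested Rokhlin towers so that its empirical statistics at each scale $N_n$ genuinely mimic i.i.d.-$\nu$ samples. This is technical but standard; the delicate point is that the non-concentration claim in Step~2 rests on a Gaussian-level fluctuation ($V_n\sqrt{\lambda_n}$) that the approximation error must not wash out, so the tower errors $2^{-n}$ and the excess heights $H_n - N_n$ must be tuned relative to the fluctuation scales, and the independence approximation along orbits must be uniform across the relevant scales.
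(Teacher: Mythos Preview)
Your proposal has two genuine gaps.

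\textbf{Step 2 does not give a strong law.} With $N_n=\lceil 1/p_{n+1}\rceil$ the top-level count $K_{n+1}$ has mean $\lambda_{n+1}\approx 1$, so $\mathbb{P}(K_{n+1}>K)\approx \mathbb{P}(\mathrm{Poisson}(1)>K)$ is a fixed positive constant independent of $n$. Borel--Cantelli does \emph{not} apply; in fact the events $\{K_{n+1}>K\}$ occur infinitely often. On any such $n$, after trimming only $K$ terms you retain at least one value $V_{n+1}=2^{(n+1)^2}$, and
\[
\frac{V_{n+1}}{d_{N_n}}=\frac{2^{(n+1)^2}}{2^{n^2+n+1/2}}=2^{\,n+1/2}\longrightarrow\infty,
\]
so $S_{N_n}^{K}/d_{N_n}\to 1$ a.s.\ fails. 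The i.i.d.\ model as written does not produce the claimed strong law for any constant $K$.

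\textbf{Step 3 does not recover i.i.d.\ statistics.} Placing a single fixed realisation $(m(n,j))_j$ on the levels of a Rokhlin tower means that every orbit of length $N_n$ sees a \emph{deterministic} window of that one sequence; the only randomness left is the starting level. The resulting law of $S_{N_n}^{k'(N_n)}$ under $\mu$ is the law of a shift of one fixed sequence, not a binomial/Poisson mixture, so the CLT argument you invoke for the failure of the weak law has no purchase. One can sometimes rescue such transfers by arguing that a generic realisation has the needed ``spreading'' along shifts, but that is a separate nontrivial statement and is not addressed.

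For comparison, the paper bypasses both issues with a short direct construction. Using Rokhlin towers of height $N_l$, it puts a single huge value $2^{N_l}$ on the base $A_l$ and a second value $2^{N_l-l}$ on a half-measure subset $B_l\subset T^{-1}(A_l)$, then sets $f=\sum_l g_l$. Every orbit of length $N_l$ hits $A_l$ exactly once, so the \emph{untrimmed} sum is deterministic up to lower order, giving a strong law along $(N_l)$ with $k(N_l)=0$. After trimming one term (the visit to $A_l$), the sum is governed by whether the orbit hits $B_l$ or $T^{-1}(A_l)\setminus B_l$, each of measure $\ge 1/3$, producing two values differing by a factor $>10$; this kills any weak law with $k'(N_l)=1$. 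The sequences $k,k'$ are then patched with the trimming sequence from Theorem~\ref{genthm} at all other $N$. No CLT, no i.i.d.\ approximation, and the dichotomy is built in by hand rather than extracted from Gaussian fluctuations.
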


In the following, we will attempt to answer Problem B in the setting that $T$ is an irrational rotation. This will be done in the next section. 

The proofs of all the theorems of this and the next section will be given in Part \ref{proofpart}.

\section{Trimming for rotations}\label{rotsec}
\subsection{Background}
For irrational $\alpha\in (0,1)$ we consider the rotation $R(x)=R_{\alpha}(x)=x +\alpha$ (mod 1), $x\in [0,1)$ (we identify $\T$ with $[0,1)$ by fixing $0$). First, we shall review some basic facts.

There is a unique\footnote{For rational $\alpha$ the CFE also exists, but is not unique. There are always two representations, namely $\alpha=[a_1,a_2,...,a_n]$ and $\alpha=[a_1,a_2,...,a_n-1,1]$.} representation
\begin{equation}\label{cfedef}
\alpha=\cfrac{1}{a_1 + \cfrac{1}{a_2 + \cfrac{1}{...}}},
\end{equation}
with $a_j\in \N$ for $j\geq 1$, called the \textit{continued fraction expansion} (CFE), we will sometimes also denote \eqref{cfedef} by $\alpha=[a_1, a_2,...]$. In this case $a_1,a_2,...$ shall be referred to as the CFE coefficients. 

If $\alpha=[a_1,a_2,...]$, then $\alpha$ is well approximated by finite iterates of the expansion, i.e.\ by the rational numbers
\begin{equation}\label{qndef}
\frac{p_n}{q_n} = \cfrac{1}{a_1 + \cfrac{1}{a_2 + ... + \cfrac{1}{a_{n-2} + \cfrac{1}{a_{n-1}}}}},
\end{equation}
with the convention $p_1=0$, $q_1=1$. The rational numbers $\frac{p_n}{q_n}$ shall be referred to as the CFE approximants. They are the best rational approximations to $\alpha$ in the sense that
\begin{equation*}
\left|\alpha - \frac{p_n}{q_n}\right| < \left|\alpha - \frac{p}{q}\right| \;\;\;\forall q\leq q_n, p\in \N, (p,q)\not= (p_n,q_n),
\end{equation*}
furthermore, we have
\begin{equation*}
\frac{1}{q_{n+1}(q_{n+1}+q_n)} < \left|\alpha - \frac{p_n}{q_n}\right| < \frac{1}{q_{n+1}^2}.
\end{equation*}
From \eqref{qndef}, it is easy to see that $p_n$ and $q_n$ satisfy the relation
\begin{equation*}
q_{n+1}=a_n q_n +q_{n-1} \; \text{ and } \; p_{n+1}=a_n p_n +p_{n-1} \;\;\; n\geq 2.
\end{equation*}
These basic facts can be found in any book on Diophantine approximations.

For $N\geq 1$ and $n$ such that $N\in [q_n,q_{n+1}-1]$ we can uniquely expand $N$ as
\begin{equation}\label{ostrdef}
N=\sum_{j=1}^n b_j q_j,
\end{equation}
with $b_j\in \N$ and $b_j \leq a_j \leq  \frac{q_{j+1}}{q_j}$, 
the expansion in \eqref{ostrdef} is unique if we require that, for each $J=1,...,n-1$, we have\footnote{Equivalently $b_1\not=a_1$ and there is no $j\in [2,n]$ with $b_j=a_j$ and $b_{j-1} \geq 1$.} $\sum_{j=1}^J b_j q_j < q_{J+1}$. This is called the Ostrowski expansion, note that $b_n=\ffloor{N}{q_n}$. In an attempt to keep the notation simple we will, in the following, always implicitly assume that $N\in [q_n,q_{n+1}-1]$ and \eqref{ostrdef} holds, thus suppressing the dependence of $n$ and $b_j$ on $N$.

We will often make use of the Denjoy-Koksma inequality; for $f\in BV$ and $n\geq 1$ it holds that
\begin{equation*}
\Big|S_{q_n}(f)(x) - q_n \int_0^1 f \d\lambda\Big| \leq  \Var(f) \;\;\;\forall x\in [0,1),
\end{equation*}
where $\lambda$ denotes the Lebesgue measure on $[0,1)$ and $\Var(f)$ denotes the total variation of $f$. In light of the Ostrowski expansion \eqref{ostrdef}, it follows that
\begin{equation*}
\Big|S_{N}(f)(x) - N \int_0^1 f \d\lambda\Big| \leq \sum_{j=1}^n b_j \Var(f) \;\;\;\forall x\in [0,1).
\end{equation*}

For simplicity's sake, in this section, we will focus on the functions
\begin{equation*}
f(x)=x^{-\beta}, \; \beta\geq 1.
\end{equation*}

Ergodic sums of $x^{-\beta}$ - or the "zero average" version $x^{-\beta}-(1-x)^{-\beta}$ - have already been studied without trimming, among others in \cite{dolgopyat2020limittheoremstoraltranslations}, \cite{sinai2008limittheorembirkoffsums}. However, there the rotation number $\alpha$ is also randomised and one obtains a distributional limit, to a non-constant distribution, rather than convergence to a constant. For example \cite[Theorem 2]{sinai2008limittheorembirkoffsums} shows; For $f(x)=\frac{1}{x}-\frac{1}{1-x}$ there is a distribution $\mathcal{D}$ on $\R$ such that
\begin{equation*}
\lambda^2\left( (\alpha,x) \;\left|\; a\leq \frac{1}{N} S_N(f)(x)\leq b\right. \right) \rightarrow \mathcal{D}[a,b] \;\;\;\as{N} \quad\forall a,b \in \R,
\end{equation*}
where $\lambda^2$ denotes the Lebesgue measure on $\T^2$. In \cite{dolgopyat2020limittheoremstoraltranslations} an analogous result is obtained for $f(x)=x^{-\beta}$ with $\beta>1$. By linearity and symmetry their result can be generalised to $f(x)=c_1 x^{-\beta} + c_2 (1-x)^{-\beta}$ for any $c_1,c_2\in \R$. Usually, the case $c_1=-c_2$ is referred to as the \textit{symmetric} case, whereas $c_1\not= - c_2$ is called the \textit{asymmetric} case.

This highlights, that the strength of our results is that $\alpha$ can be fixed. The price we have to pay is that trimming might be necessary.

The following theorems of this section are stated for the functions $\frac{1}{x}$ resp.\ $x^{-\beta}$. More generally, they apply also to $\frac{c_1}{x} + \frac{c_2}{1-x}$ resp.\ $c_1 x^{-\beta} + c_2 (1-x)^{-\beta}$ with $c_1\not=-c_2$, with only minor changes in the proof which we leave to the interested reader. However, note that, in contrast to \cite{dolgopyat2020limittheoremstoraltranslations} and \cite{sinai2008limittheorembirkoffsums}, we only discuss the asymmetric case.

\subsection{The function $\frac{1}{x}$}
First let us consider $f(x)=\frac{1}{x}$. As it turns out, for almost all $\alpha$ and the function $f$, a weak law of large numbers holds, even without trimming. On the other hand, for a $G_{\delta}$-dense set of $\alpha$, even the weak law does not hold without trimming.

The crucial condition here is a well-known Diophantine condition on $\alpha$.

\begin{definition}
The number $\alpha$ is said to be of \textit{Roth type} if, for all $\epsilon>0$ and large enough $n$, it holds that $q_{n+1}<q_n^{1+\epsilon}$. Equivalently
\begin{equation*}
\lim_{n\to \infty} \frac{\log(q_{n+1})}{\log(q_n)}=1.
\end{equation*}
\end{definition} 

\begin{theorem}\label{weak1/xthm}
There are $d_N>0$ such that
\begin{equation*}
\lambda\left(\left|\frac{S_N(f)}{d_N}-1\right|>\epsilon\right) \to 0 \;\;\;\as{N} \quad \forall \epsilon>0,
\end{equation*}
if and only if $\alpha$ is of Roth type. Furthermore, in this situation 
\begin{equation*}
d_N= N\log(N).
\end{equation*}
\end{theorem}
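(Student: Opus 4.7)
The plan is to prove the equivalence by handling the two directions separately.

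For the sufficient direction (Roth type implies the weak law with $d_N=N\log N$), the strategy is truncation. Set $f_K(x)=\min(K,1/x)$ and choose $K=K_N$ with $K_N\to\infty$, $N/K_N\to 0$, and $\log K_N=(1+o(1))\log N$, e.g.\ $K_N=N\log N$. Since $\lambda$ is $R_\alpha$-invariant,
\begin{equation*}
\lambda\bigl(\exists\,k<N:\,R_\alpha^k x<1/K_N\bigr)\leq N/K_N\to 0,
\end{equation*}
so $S_N(f)=S_N(f_{K_N})$ with probability tending to 1. Since $\int f_{K_N}\,d\lambda=1+\log K_N\sim\log N$, one has $N\int f_{K_N}=(1+o(1))d_N$, reducing the claim to $|S_N(f_{K_N})-N\int f_{K_N}|=o(N\log N)$ in probability. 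For this I would apply Denjoy-Koksma to each block of the Ostrowski expansion $N=\sum_{j=1}^n b_j q_j$, giving the pointwise bound
\begin{equation*}
\bigl|S_N(f_{K_N})(x)-N\textstyle\int f_{K_N}\,d\lambda\bigr|\leq \Var(f_{K_N})\sum_{j=1}^n b_j,
\end{equation*}
and then control $\sum_j b_j$ using the Roth property (which gives $a_j\leq q_j^\epsilon$ together with $n=O(\log N)$). The naive bound $K_N\sum_j b_j$ is probably too weak by itself; I anticipate needing a finer estimate, either via a multi-scale decomposition of $f_{K_N}$ (treating each dyadic piece of the singularity separately and exploiting equidistribution at that scale), or an $L^2$ Fourier argument exploiting the Roth lower bound $\|k\alpha\|\geq C_\epsilon|k|^{-1-\epsilon}$ to control the Weyl sums $D_N(k)=\sum_{n<N}e^{2\pi ikn\alpha}$.

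For the necessary direction, suppose $\alpha$ is not of Roth type; fix $\epsilon>0$ and a subsequence $n_k\to\infty$ with $q_{n_k+1}>q_{n_k}^{1+\epsilon}$. The plan is to derive a contradiction from the assumed existence of any normalizing $d_N$. When $q_{n_k+1}\gg q_{n_k}^{1+\epsilon}$, the orbit $\{R_\alpha^j x\}_{j<q_{n_k}}$ exhibits a two-scale gap structure (gaps $\asymp 1/q_{n_k}$ and $\asymp 1/q_{n_k+1}$), and as $N$ grows through $[q_{n_k},q_{n_k+1}]$ the sum $S_N(f)$ accumulates singular contributions at certain discrete $N$'s of size as large as $\asymp q_{n_k+1}$ on a non-negligible set of $x$. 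Comparing the rates of increase of $S_N(f)$ at $N=q_{n_k}$ versus $N=q_{n_k+1}$ should produce ratios incompatible with any $d_N$ satisfying $d_{q_{n_k+1}}/d_{q_{n_k}}\sim(q_{n_k+1}\log q_{n_k+1})/(q_{n_k}\log q_{n_k})$ that the weak law would force.

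The hardest step will be the sufficient direction: sharpening the Denjoy-Koksma (or Fourier) estimate to a genuine $o(N\log N)$. The crude variational bound already reaches order $N\log N$ even for Roth $\alpha$, so the key insight to exploit is that the ``resonant'' Fourier modes (those $k$ with $\|k\alpha\|$ small) cluster near the CFE denominators $q_j$ and are sufficiently sparse under the Roth condition to keep the total $L^2$ deviation controlled. Balancing the truncation level $K_N$, the orbit's equidistribution, and the behaviour of the Ostrowski digits is where the technical depth lies.
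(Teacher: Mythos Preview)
Your sufficient direction starts correctly—the truncation-plus-union-bound reduction is exactly how the paper passes from $S_N^1$ to $S_N$ at the very end—but you have correctly diagnosed that the naive Denjoy--Koksma bound $\Var(f_{K_N})\sum_j b_j$ is too crude, and neither of your proposed fixes is developed enough to close the gap. The paper does not resolve this by Fourier analysis. Instead it truncates at level $q_n+q_{n-1}$ (so $\Var\approx 2q_n$), applies Denjoy--Koksma block by block over the Ostrowski expansion, and crucially tracks the location of the \emph{closest} orbit point in each block rather than paying the full variation every time (Lemma~\ref{gendklem}, Lemma~\ref{strong1/xlembn}). The resulting error is $\sum_{j\leq n-1}a_jq_j\log(b_j)$, which \emph{is} $o(N\log N)$ under the Roth condition (Lemma~\ref{err1/xbnlem}). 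Combined with the fact that $\sum_j b_jq_j\log q_j\sim N\log N$ holds \emph{iff} $\alpha$ is Roth (Proposition~\ref{logprop}), this gives uniform convergence of $S_N^{b_n+1}(f)/(N\log N)$; the trimming is then removed in two stages, first down to $S_N^1$ on a set of large measure (Lemma~\ref{weak1/x}), and finally to $S_N$ by your union-bound argument. Your ``multi-scale decomposition'' is in the right spirit, but without saying how the scales interact with the Ostrowski blocks and how the closest-point contributions telescope, you do not yet have a proof.

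Your necessary direction has a more basic problem: comparing $S_N(f)$ at $N=q_{n_k}$ against $N=q_{n_k+1}$ does not by itself contradict the existence of \emph{some} normalizing sequence, since in this direction $d_N$ is not assumed to be $N\log N$ (or monotone, or anything). What is needed is dispersion of $S_N(f)$ across $x$ at a \emph{single} time $N$. The paper (Lemma~\ref{weak1/xnotlem}) constructs, for a well-chosen $N\asymp q_{n+1}$, two sets $A,B$ of uniformly positive measure on which $S_N(f)$ differs by a fixed multiplicative factor; this immediately rules out any $d_N$. The mechanism is that for $x$ placing an orbit point very close to $0$, many of the subsequent $b_n$ iterates land in the small gap of width $\asymp 1/q_{n+1}$ and contribute $\asymp q_{n+1}\log q_n$, while for $x$ whose closest orbit point stays at distance $\asymp 1/q_n$ from $0$ no such contribution appears. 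Your sketch mentions the two-scale gap structure but then aims at the wrong target.
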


\begin{remark}\label{seq1/xrem}
Contrary to the situation in Theorem \ref{weak1/xthm}, for $\alpha$ not being of Roth type, we may need a trimming sequence arbitrarily close to $N$ to obtain a trimmed weak law. More precisely, given any sequence $l(N)=o(N)$, there is a $G_{\delta}$ dense set of $\alpha$ such that, whenever $k(N)\leq l(N)$ and $d_N>0$, there is an $\epsilon>0$ such that
\begin{equation*}
\limsup_{N\to \infty} \lambda\left(\left|\frac{S_N^{k(N)}(f)}{d_N}-1\right|>\epsilon\right) >0.
\end{equation*}
\end{remark}

\begin{remark}\label{weakgenrem}
A trimmed weak law generally implies an untrimmed weak law under condition \eqref{weakgencond}. More precisely, let $(X, T, \mu)$ be a probability-preserving dynamical system, and suppose there exist $k(N) = o(N)$ and $d_N > 0$ such that
\begin{equation*}
\mu\left(\left|\frac{S_N^{k(N)}(f)}{d_N}-1\right|>\epsilon\right) \to 0 \quad \text{as } N\to\infty,  \quad\forall \epsilon>0.
\end{equation*}
If, in addition,
\begin{equation}\label{weakgencond}
\mu\left( f > c \frac{d_N}{k(N)} \right) = o\left(\frac{1}{N}\right) \quad \forall c>0,
\end{equation}
then the untrimmed weak law follows:\footnote{For any $\epsilon > 0$, we estimate
\begin{align*}
\mu\left( \left| S_N(f) - d_N\right| > \epsilon d_N \right) 
& \leq \mu\left( \left| S_N^{k(N)}(f) - d_N\right| > \frac{\epsilon d_N}{2} \right) + \mu \left( \exists 0\leq n\leq N-1 \;|\; f\circ T^n > \frac{\epsilon d_N}{2 k(N)} \right) \\
& \leq \mu\left( \left| S_N^{k(N)}(f) - d_N \right| > \frac{\epsilon d_N}{2} \right) + N \mu\left( f > \frac{\epsilon d_N}{2 k(N)} \right) \to 0,
\end{align*}
as $N\to\infty$.}
\begin{equation*}
\mu\left(\left|\frac{S_N(f)}{d_N}-1\right|>\epsilon\right) \to 0 \quad \text{as } N\to\infty, \quad \forall \epsilon>0.
\end{equation*}
\end{remark}

Next, we investigate strong laws. As above for iids, or the Gauss map, for almost all $\alpha$ it suffices to trim by $k(N)=1$.

\begin{theorem}\label{strong1/xthm}
For almost all $\alpha$ it holds that 
\begin{equation}\label{strong1/xthmcon}
\lim_{N\to \infty} \frac{S_N^1(f)(x)}{N \log(N)} = 1 \;\;\; \text{ for a.e. } x\in [0,1).
\end{equation}
Furthermore, if $\alpha$ is of bounded type, then this convergence is uniform in $x$, i.e.
\begin{equation*}
\lim_{N\to \infty} \frac{S_N^1(f)(x)}{N \log(N)} = 1 \;\;\; \text{ uniformly in } x\in [0,1).
\end{equation*}
\end{theorem}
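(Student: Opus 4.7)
The plan is to decompose $f = f_{K_N} + (f-f_{K_N})$ at the threshold $K_N = \log N$, where $f_K := \min(f,K)$ is of bounded variation with $\Var(f_K)=K-1$ and $\int_0^1 f_K \d\lambda = 1+\log K$. The Denjoy-Koksma inequality combined with the Ostrowski expansion $N = \sum_{j\le n} b_j q_j$ then yields
\begin{equation*}
S_N(f_{K_N})(x) = N(1+\log \log N) + O\Bigl(K_N \sum\nolimits_j b_j\Bigr) \qquad \forall x.
\end{equation*}
For the singular tail I would order the orbit values falling in $[0, 1/K_N)$ as $r_1(x,N)<r_2(x,N)<\cdots<r_M(x,N)$. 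The single largest value of $f$ along the first $N$ iterates is then exactly $1/r_1$, and a direct computation gives the identity
\begin{equation*}
S_N^{1}(f)(x) = S_N(f_{K_N})(x) - K_N M + \sum_{i=2}^{M} \frac{1}{r_i}.
\end{equation*}

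The decisive input now is the three-distance theorem. For $\alpha$ of bounded type it delivers $r_i \asymp i/N$ uniformly in $x$ with constants depending only on $\alpha$, which yields $M = N/K_N + O(\log N)$ and $\sum_{i=2}^M 1/r_i = N\log M + O(N) = N\log N - N\log\log N + O(N)$. Adding these to $S_N(f_{K_N}) \sim N + N\log\log N$ and subtracting $K_N M \sim N$, the $\pm N\log\log N$ contributions cancel and one lands on $S_N^{1}(f)(x) = N\log N + O(N)$ uniformly in $x$, proving the bounded-type (and hence uniform in $x$) assertion.

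For the a.e.\ $\alpha$ statement, bounded type must be replaced by milder Diophantine control: a Borel-Cantelli argument for the Gauss measure gives $a_n = O(n(\log n)^{1+\epsilon})$ for a.e.\ $\alpha$, which in turn bounds $\sum_j b_j = O((\log N)^{2+\epsilon})$ and keeps $K_N \sum_j b_j = o(N\log N)$. The three-distance theorem still forces $r_i \ge c(\alpha)(i-1)/N$ for $i\ge 2$, but $r_1$ itself may be anomalously small on an $x$-set of vanishing measure; another Borel-Cantelli argument, applied this time in $x$ for fixed $\alpha$, excludes these atypical $x$. Passage from convergence in probability (essentially Theorem~\ref{weak1/xthm}) to a.s.\ convergence I would achieve by working along the subsequence $N=q_n$, where Denjoy-Koksma is sharpest, and then interpolating to general $N$ via the Ostrowski decomposition.

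The hardest part is controlling $\sum_{i\ge 2} 1/r_i$ once the minimum has been removed. In the bounded-type case this follows directly from the uniform three-distance theorem, but in the a.e.\ case clusters of two or three orbit points near zero may appear whenever the Ostrowski expansion of $N$ mixes small with large partial quotients. Organising this bookkeeping so that such clusters do not inflate $\sum 1/r_i$ beyond $O(N)$, while simultaneously keeping the Denjoy-Koksma error subdominant to $N\log N$, is where I expect the technical weight of the proof to lie.
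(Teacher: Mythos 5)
Your bounded--type argument is essentially sound and close in spirit to the paper's route through Proposition \ref{strong1/xprop} (your cutoff of $f$ at height $\log N$ plays the role of the paper's cutoff $f_n$ at $\frac{1}{q_n+q_{n-1}}$, and Denjoy--Koksma plus Ostrowski gives the main term). One small caveat: $r_i\asymp i/N$ alone cannot produce the constant $1$ in front of $N\log N$, since two-sided constants $c_1,c_2$ would only bracket $\sum_{i\ge 2}1/r_i$ between $c_2^{-1}N\log M$ and $c_1^{-1}N\log M$. You need the counting estimate $\#\{i:r_i\le t\}=Nt+O\bigl(\sum_j b_j\bigr)$ (Denjoy--Koksma for indicators), which gives $r_i=\frac{i+O(\log N)}{N}$ and an error $O(N\log\log N)$ rather than your claimed $O(N)$; this is harmless for the theorem but should be said.

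The genuine gap is in the almost-every-$\alpha$ half. The inequality $r_i\ge c(\alpha)(i-1)/N$ for $i\ge 2$ is simply false unless $\alpha$ is of bounded type: the minimal gap among the first $N$ orbit points is $\delta_{n+1}\asymp 1/q_{n+1}$, and for a.e.\ $\alpha$ one has $\liminf_n q_n/q_{n+1}=0$, so when $N\asymp q_n$ and $a_n$ is large an entire cluster of up to $b_n+1\approx a_n$ points (not ``two or three'') can sit within $O(1/q_n)$ of the origin at spacing $\approx 1/q_{n+1}\ll 1/N$. Its contribution to $\sum_{i\ge 2}1/r_i$ is of order $q_{n+1}\log b_n$, which can be comparable to, or larger than, $N\log N$; this is exactly the mechanism behind Theorems \ref{nonconat0thm1/x} and \ref{noncothm1/x}, so any argument that ignores it cannot be correct, and excluding only an ``anomalously small $r_1$'' does not remove it, because the damage comes from the whole cluster whose size is governed by $r_1$ together with the position of $N$ inside $[q_n,q_{n+1})$. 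Your Borel--Cantelli-in-$x$ suggestion is the right idea, but it is precisely where the proof lives and it is not carried out: one must (i) identify when the cluster is automatically harmless --- e.g.\ when $q_{n+1}\le q_n\log^{1-\kappa/2}(q_n)$, or when $N\ge q_n\log^{\kappa/2}(q_n)$, in which cases $q_{n+1}\log b_n=o(N\log N)$ --- and (ii) for the remaining $n$, exclude the set $B_n$ of $x$ whose orbit enters an interval of length $\approx \frac{1}{q_n\log^{1-\kappa}(q_n)}$ around $0$ within $q_n$ steps, and prove $\sum_n\lambda(B_n)<\infty$. That summability does not follow from your a.e.\ bound $a_n=O(n(\log n)^{1+\epsilon})$; the paper has to invoke the quantitative condition \eqref{strong1/xthmfkalphacond} of Fayad--Kanigowski together with $q_{n+1}<q_n\log(q_n)\log^2(\log(q_n))$, and you would need some such condition too. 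Finally, proving a.s.\ convergence along $N=q_n$ and ``interpolating'' via Ostrowski cannot work as stated: between $q_n$ and $q_{n+1}$ the normalisation $N\log N$ varies by the unbounded factor $q_{n+1}/q_n$, and it is exactly at intermediate $N$ that $S^1_N(f)$ misbehaves.
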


The convergence in \eqref{strong1/xthmcon} only holds for almost every $x$, in fact;

\begin{theorem}\label{nonconat0thm1/x}
For almost all $\alpha$ it holds that
\begin{equation*}
\limsup_{N\to \infty} \frac{S_N^1(f)(\alpha)}{N \log(N)} > 1 \geq \liminf_{N\to \infty} \frac{S_N^1(f)(\alpha)}{N \log(N)}.
\end{equation*}
\end{theorem}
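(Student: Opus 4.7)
My approach exploits the rigid orbit structure at $x = \alpha$: the iterates $R^n(\alpha) = (n+1)\alpha \pmod 1$ approach $0$ only through best approximations, at times $n = q_m - 1$ so that $R^{q_m - 1}(\alpha) = q_m\alpha$, with $\|q_m\alpha\| \approx 1/q_{m+1}$. Since $q_m\alpha - p_m$ alternates in sign with $m$, for $m$ of one fixed parity we have $q_m\alpha \pmod 1 \approx 1/q_{m+1}$, giving $f(q_m\alpha) \approx q_{m+1}$ (very large), whereas for the other parity $f(q_m\alpha) \approx 1$. I plan to construct two subsequences of $N$ along which $S_N^1(f)(\alpha)/(N\log N)$ tends to distinct limits, using the Denjoy-Koksma inequality cited in the excerpt to control the bulk and the three-distance theorem (or Ostrowski expansion) to identify close returns.

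For $\liminf \leq 1$, I take $N = q_n$ along $n$ of the good parity. The three-distance theorem gives that within $\{j\alpha : 1 \leq j \leq q_n\}$, the unique iterate at distance less than $1/q_n$ from $0$ on the right is $q_n\alpha$ itself. Applying Denjoy-Koksma to $\min(q_n, f)$ yields $S_{q_n}(\min(q_n, f))(\alpha) = q_n \log q_n + O(q_n)$, and the surplus from this single close return above the truncation adds $\approx q_{n+1} - q_n$; secondary close returns $q_{n-2}\alpha, q_{n-4}\alpha, \ldots$ contribute only $O(q_{n-1})$ by the super-exponential growth of $(q_k)$. Hence
\begin{equation*}
S_{q_n}(f)(\alpha) = q_n \log q_n + q_{n+1} + O(q_n),
\end{equation*}
and since the maximum equals $\approx q_{n+1}$, trimming yields $S_{q_n}^1(f)(\alpha) = q_n \log q_n \cdot (1+o(1))$, so the ratio tends to $1$.

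For $\limsup > 1$, I rely on the CFE statistics: the sequence $(a_n)$ forms a $\psi$-mixing process under the Gauss map, and by Borel-Cantelli, for a.e.\ $\alpha$ there exists a subsequence $(n_k)$ of the good parity with $a_{n_k}/\log q_{n_k} \to \infty$. Setting $N_k = 2 q_{n_k}$ (valid since $a_{n_k} \geq 2$ eventually), the orbit $\{j\alpha : 1 \leq j \leq N_k\}$ contains exactly two iterates at distance less than $1/q_{n_k}$ from $0$ on the right, namely $q_{n_k}\alpha$ and $2 q_{n_k}\alpha$, with $f$-values $\approx q_{n_k+1}$ and $\approx q_{n_k+1}/2$. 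After trimming off only the maximum, the second close return survives, producing
\begin{equation*}
\frac{S_{N_k}^1(f)(\alpha)}{N_k \log N_k} \geq 1 + \frac{a_{n_k}}{4\log q_{n_k}} + o(1) \longrightarrow \infty.
\end{equation*}

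The main technical obstacle is carefully bookkeeping the iterates at intermediate distances from $0$: those not captured as top close returns $j q_n\alpha$ but not uniformly absorbed into the Denjoy-Koksma bulk either. Verifying that their total contribution is of lower order requires a systematic application of the three-distance theorem and the Ostrowski expansion, treating both parities and the secondary approximants $q_{n-2}, q_{n-4}, \ldots$ uniformly in $n$. A secondary, more routine issue is to confirm the existence of the required subsequence of $n$ in the good parity class with $a_n/\log q_n \to \infty$ for a.e.\ $\alpha$, which reduces to Gauss-Kuzmin statistics and a Borel-Cantelli argument compatible with the $\psi$-mixing of the Gauss process.
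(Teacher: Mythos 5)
Your proposal is correct in outline, and its $\liminf$ half is essentially the paper's argument: there the uniform bound $|S_{q_n}^1(f)(x)-q_n\log(q_n)|\le 7q_n$ (Proposition \ref{qn1/xprop}) is obtained exactly by your device, Denjoy--Koksma for a truncated $f$ plus the fact that at most one point of an orbit of length $q_n$ lies in $\left[0,\tfrac{1}{q_n+q_{n-1}}\right)$, so no parity restriction is even needed there. The $\limsup$ half is where you genuinely diverge from the paper. The paper works with the typical, only moderately large quotients: by Khinchine, $q_{n+1}>q_n\log(q_n)\log_3(q_n)$ holds infinitely often a.e., the parity refinement (infinitely many such $n$ with $\alpha-\frac{p_n}{q_n}>0$) is isolated as Lemma \ref{evenlem} and proved via ergodicity of the square of the Gauss map, and the excess is produced not by a single close return but by the whole cluster of $b_n$ points $R^{jq_n}$ accumulating at the near-hit, at a time $N_n\approx q_{n+1}\log_5(q_n)/\log(q_n)$ inside the block $[q_n,q_{n+1})$ (Lemma \ref{pointoscbiglem}), then converted into $\limsup\ge \tfrac32$ via Proposition \ref{strong1/xprop}. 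You instead require the rarer event $a_n/\log(q_n)\to\infty$ along good-parity indices, after which the two returns $q_n\alpha$ and $2q_n\alpha$ at time $N=2q_n$ already beat the single trimmed maximum, giving $\limsup=\infty$ along your subsequence and avoiding the intermediate-time bookkeeping, at the price of a stronger arithmetic input. Both routes stand or fall with the parity-restricted divergence Borel--Cantelli, which you assert rather than prove; it is not entirely routine (plain Khinchine gives no control on the parity), but your route can be completed: quasi-independence/$\psi$-mixing of the digits gives positive measure for $\{a_{2m}\ge 2m\log(2m) \text{ i.o.}\}$ (in the spirit of Lemma \ref{reclem}) and a zero--one argument (exactness of the Gauss map, or the paper's $T^2$-ergodicity trick) upgrades this to full measure; you also need $\log q_n=O(n)$ a.e.\ (L\'evy) to turn $a_n\ge n\log n$ into $a_n/\log(q_n)\to\infty$. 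Two harmless slips: ``exactly two iterates in $(0,1/q_n)$'' can fail (a third point at distance about $\delta_n+\delta_{n+1}$ may intrude, but its $f$-value is $O(q_n)$ and is absorbed in the error), and $(q_k)$ grows exponentially, not super-exponentially --- either suffices for your $O(q_{n-1})$ bound on the secondary returns.
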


\begin{remark}
The set of all $x$ such that 
\begin{equation*}
\limsup_{N\to \infty} \frac{S_N^1(f)(x)}{N \log(N)} > 1 \geq \liminf_{N\to \infty} \frac{S_N^1(f)(x)}{N \log(N)}
\end{equation*}
is invariant under $R$, as will be shown in the proof of Theorem \ref{noncothm1/x}. Therefore it is also true that
\begin{equation*}
\limsup_{N\to \infty} \frac{S_N^1(f)(0)}{N \log(N)} > 1 \geq \liminf_{N\to \infty} \frac{S_N^1(f)(0)}{N \log(N)}.
\end{equation*}
\end{remark}

Lastly, there are\footnote{As before, the set of such $\alpha$ will be $G_{\delta}$-dense. We will not show this fact, as it can easily be verified and is not essential to this work, leaving it instead to the reader.} $\alpha$, such that the untrimmed weak law holds\footnote{Or equivalently $\alpha$ is of Roth type.}, but not the trimmed strong law for any $k(N)$ being constant.

\begin{theorem}\label{noncothm1/x}
There is an $\alpha$, such that 
\begin{equation*}
\lambda\left(\left|\frac{S_N(f)}{N \log(N)}-1\right|>\epsilon\right) \to 0 \;\;\;\as{N} \quad \forall \epsilon>0,
\end{equation*}
but, for every $K\geq 1$ and almost every $x$, it holds that
\begin{equation}\label{noncoclaim1/x}
\limsup_{N\to \infty} \frac{S_N^K(f)(x)}{N \log(N)} > 1 \geq \liminf_{N\to \infty} \frac{S_N^K(f)(x)}{N \log(N)}.
\end{equation}
\end{theorem}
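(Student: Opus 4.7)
My plan is to construct $\alpha$ directly by specifying its continued fraction expansion and then verify the two directions of inequality \eqref{noncoclaim1/x} separately. Since by Theorem \ref{weak1/xthm} the weak law forces $\alpha$ to be of Roth type, I would set $a_n = 1$ for all $n$ outside a very sparse increasing sequence $n_1 < n_2 < \cdots$ along which $a_{n_l}$ grows slowly as a function of $l$, with the $n_l$ spaced widely enough that $\log a_n = o(\log q_{n-1})$, thereby preserving Roth type. The untrimmed weak law $S_N(f)/(N\log N) \to 1$ in probability then follows immediately from Theorem \ref{weak1/xthm}.

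The bound $\liminf_{N\to\infty} S_N^K(f)(x)/(N\log N) \le 1$ almost surely would then be a routine consequence of the weak law: convergence in probability to $1$ yields a subsequence along which the ratio converges to $1$ almost everywhere, whence $\liminf \le 1$ a.e. For the strict inequality $\limsup > 1$, following the remark after Theorem \ref{nonconat0thm1/x}, the set
\begin{equation*}
A_K = \left\{x \in [0,1) : \limsup_{N\to\infty} \frac{S_N^K(f)(x)}{N\log N} > 1\right\}
\end{equation*}
is $R$-invariant, so by ergodicity $\lambda(A_K) \in \{0,1\}$; it therefore suffices to show $\lambda(A_K) > 0$.

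To this end I would use a Borel--Cantelli-type argument. For each $l > K$, I would identify a time $N_l$ of order $q_{n_l}$ and a measurable set $E_l \subset [0,1)$ of Lebesgue measure bounded below by some $\eta_K > 0$ independent of $l$, on which the orbit $(x + j\alpha)_{0 \le j < N_l}$ has more than $K$ returns close enough to $0$ that, after removing the $K$ largest terms, the ratio $S_{N_l}^K(f)(x)/(N_l\log N_l)$ still exceeds $1 + c_K$ for some $c_K > 0$. Approximate independence of the events $E_l$ on exponentially separated time scales (via equidistribution) would give $\lambda(\limsup_{l\to\infty} E_l) = 1$, hence $A_K$ is conull; a diagonal choice of the pairs $(n_l, a_{n_l})$ should then make the same $\alpha$ work for every $K$ simultaneously.

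The main obstacle is producing the $E_l$ with a uniform positive lower measure bound while respecting Roth type: the Roth condition forces $\log a_{n_l}/\log q_{n_l-1} \to 0$, which naively renders the close-approach excess at a single CFE scale individually $o(N_l \log N_l)$, so the pathology cannot come from a single scale. Overcoming this likely requires leveraging stacked close approaches at several CFE scales simultaneously through the Ostrowski expansion of $N_l$, producing a positive-measure set of ``resonant'' starting points for which the combined excess is macroscopic. Quantifying this stacking against the Roth constraint, and extracting enough asymptotic independence for the Borel--Cantelli step, will be the central technical difficulty.
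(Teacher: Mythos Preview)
Your overall framework (weak law from Roth type, $\liminf \le 1$ from convergence in probability, invariance plus ergodicity reducing $\limsup > 1$ to a positive-measure statement, then a Borel--Cantelli argument) is sound and matches the paper's architecture. However, your diagnosis of the ``main obstacle'' is incorrect, and the workaround you propose (stacking close approaches across several CFE scales) is both unnecessary and likely unworkable. The paper shows that a \emph{single} CFE scale suffices to produce an excess of order $N\log N$, provided two choices are made that differ from yours. First, $N_n$ is \emph{not} taken of order $q_n$: one takes $N_n = \lceil q_{n+1}\log_5(q_n)/\log(q_n)\rceil$, so that $b_n \to \infty$ and the cluster of $b_n$ orbit points near the singularity contributes, after trimming $K$, roughly $q_{n+1}\log(b_n/(\epsilon_n q_{n+1}))$, which dominates $N_n\log N_n$ whenever $x$ lies in the set $A_n = \{x : R^{j_1^{q_n}(x)}(x) \in (0,\epsilon_n)\}$ with $\epsilon_n = (q_n\log(q_n)\log_3(q_n))^{-1}$. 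Second, $\alpha$ is chosen so that $q_{n+1} \in (q_n\log(q_n)\log_3(q_n),\, q_n\log^2(q_n))$ for \emph{every} $n$, not along a sparse subsequence. This $\alpha$ is still of Roth type, so the weak law holds.

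The price of the single-scale argument is that $\lambda(A_n) = \epsilon_n q_n = (\log(q_n)\log_3(q_n))^{-1} \to 0$, so your desired uniform bound $\lambda(E_l)\ge\eta_K>0$ is unattainable---and this is exactly why your sparse construction cannot work, since along a sparse subsequence you cannot make $\sum_l \lambda(E_l)$ diverge without such a bound. The paper instead uses that the growth condition on $q_n$ forces $\log(q_n) \lesssim n\log n$, whence $\sum_n \lambda(A_n) \gtrsim \sum_n (n\log n\,\log_2 n)^{-1} = \infty$; a weak-independence Borel--Cantelli lemma (pairwise quasi-independence $\lambda(A_i\cap A_j)\le 2\lambda(A_i)\lambda(A_j)$, verified via Denjoy--Koksma) then gives $\lambda(\limsup A_n)>0$. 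In short: drop the sparse construction and the uniform measure bound, take $a_n$ large for all $n$, choose $N_n$ in the intermediate range above, and let $\lambda(A_n)\to 0$ slowly enough that the series diverges.
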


\subsection{The function $x^{-\beta}$ for $\beta>1$}

Let $\beta>1$ and $f(x)=x^{-\beta}$. We will prove sufficient and necessary conditions for $k(N)$ such that trimmed weak or strong laws hold. The general conditions are slightly complicated to state, but, in the case of monotone trimming sequences, reduce to a much simpler one. Here we will only state the condition for monotone trimming sequences, leaving the more general statements for the proof section.

\begin{theorem}\label{betathm}
Let $k(N)$ be monotone with $k(N)=o(N)$ and $d_N>0$, then the following are equivalent

(I) the trimmed strong law holds uniformly, i.e.
\begin{equation}\label{betastrongcon}
\lim_{N\to \infty} \frac{S_N^{k(N)}(f)(x)}{d_N} = 1 \;\;\; \text{ uniformly in } x\in [0,1),
\end{equation}
(II) the trimmed weak law holds, i.e.
\begin{equation}\label{betaweakcon}
\lambda\left(\left|\frac{S_N^{k(N)}(f)}{d_N}-1\right|>\epsilon\right) \to 0 \text{ as } N\to\infty
\quad \forall \epsilon>0,
\end{equation}
(III)\begin{equation}\label{betacond}
\frac{k(N)}{\max (b_n , \max_{j\leq n-1} a_j)} \to \infty\text{ as } N\to\infty.
\end{equation}
Furthermore, in this situation, $d_N=\frac{1}{\beta - 1} N^{\beta} k(N)^{1-\beta}$.
\end{theorem}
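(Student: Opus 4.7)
My plan is to prove the cycle of implications (III) $\Rightarrow$ (I) $\Rightarrow$ (II) $\Rightarrow$ (III); the step (I) $\Rightarrow$ (II) is automatic since uniform convergence implies convergence in measure. The unifying idea is a truncation argument. I would set $K_N := (N/k(N))^{\beta}$ and $f_{K_N}(x) := \min(f(x),K_N)$, so that $\lambda(f > K_N) = K_N^{-1/\beta} = k(N)/N$ and $N\int_0^1 f_{K_N}\d\lambda = \frac{\beta}{\beta-1} N^{\beta} k(N)^{1-\beta} + O(N)$. If the top $k(N)$ values of $f\circ R^n$ coincided exactly with those exceeding $K_N$, the identity
\begin{equation*}
S_N^{k(N)}(f) = S_N(f_{K_N}) - k(N)\, K_N
\end{equation*}
would immediately give $S_N^{k(N)}(f) \approx \tfrac{1}{\beta-1} N^{\beta} k(N)^{1-\beta} = d_N$, which determines the normalising sequence.

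For (III) $\Rightarrow$ (I), I would make this heuristic rigorous and uniform in $x$ via two ingredients. The first is a Denjoy-Koksma-type estimate controlling $S_N(f_{K_N})$ by $N\int f_{K_N}\d\lambda$, using $\Var(f_{K_N}) \lesssim K_N$ together with the Ostrowski expansion $N = \sum b_j q_j$, the error being $o(d_N)$ precisely under (III). The second is a uniform counting lemma at the singularity: by the three-distance theorem applied layer-by-layer along the Ostrowski expansion, one shows
\begin{equation*}
\#\{0\leq n <N : R^n x \in [0, K_N^{-1/\beta})\} = k(N)(1+o(1))
\end{equation*}
uniformly in $x \in [0,1)$, with the error governed by $\max(b_n,\max_{j \leq n-1} a_j)$. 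Combining these estimates and handling the mild boundary case $f\circ R^n\approx K_N$ would then yield the uniform trimmed strong law, and the explicit formula for $d_N$ drops out of the calculation above.

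For the converse (II) $\Rightarrow$ (III), I would argue by contraposition: suppose (III) fails along some subsequence $N_m$, so that either $b_{n_m} \geq c\, k(N_m)$ for some $c > 0$, or $a_{j_m} \geq c\, k(N_m)$ for some $j_m \leq n_m - 1$. In the first case, the Ostrowski structure deposits $b_{n_m}$ orbit points into a single near-singularity cluster at distances $\sim j \|q_{n_m-1}\alpha\|$, $j = 1,\dots,b_{n_m}$; trimming only $k(N_m) < b_{n_m}/c$ of them would leave a residual contribution of order $d_{N_m}$ on a set of positive Lebesgue measure. In the second case, the cycle of length $q_{j_m+1}$ contained in $\{0,\dots,N_m - 1\}$ would contain $a_{j_m}$ near-approaches to $0$ at scale $1/q_{j_m+1}$, producing analogous deviations that prevent convergence in measure.

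The main obstacle will be the uniform counting lemma together with the Denjoy-Koksma estimate: a naive application of the Ostrowski-form Denjoy-Koksma to $f_{K_N}$ gives an error of order $K_N\sum b_j$, which only yields the claim under a strictly stronger assumption than (III); closing this gap will require exploiting either finer, $x$-dependent estimates of the Denjoy-Koksma error at each Ostrowski level or a more adapted decomposition of $f$ that replaces $\sum b_j$ by $\max(b_n,\max_{j\leq n-1} a_j)$. Layered on top is the need for genuine uniformity in $x$, since orbit points may straddle the boundary $R^n x = K_N^{-1/\beta}$ in $x$-dependent ways, forcing careful bookkeeping at every scale of the continued-fraction hierarchy.
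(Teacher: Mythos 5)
Your outline follows the right general strategy (truncate at spatial scale $k(N)/N$, identify $d_N=\frac{1}{\beta-1}N^{\beta}k(N)^{1-\beta}$, (I)$\Rightarrow$(II) trivial, contraposition for (II)$\Rightarrow$(III)), but both non-trivial implications have genuine gaps. For (III)$\Rightarrow$(I) you yourself flag the decisive problem and then leave it open: Denjoy--Koksma applied to $f_{K_N}$ over the full Ostrowski expansion gives an error $K_N\sum_{j\leq n}b_j$, and likewise your ``uniform counting lemma'' for $\#\{n<N: R^nx\in[0,K_N^{-1/\beta})\}$ has natural error $O(\sum_j b_j)$, not $O(\max(b_n,\max_{j\leq n-1}a_j))$ as you assert; under \eqref{betacond} alone $\sum_j b_j$ need not be $o(k(N))$ (e.g.\ bounded type with $k(N)=\log\log N$, where $\sum_j b_j$ can be of order $\log N$). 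The paper closes exactly this gap by a different decomposition: it chooses $K(N)$ with $q_{n-K(N)}\leq Nk(N)^{(1+\epsilon_\beta)(1-\beta)/\beta}$, splits $N=N'+N''$ with $N'=\sum_{j=n-K(N)}^{n}b_jq_j$, bounds the discarded block crudely by $\sup_x S^1_{N''}(f)(x)=O(q_{n-K(N)}^{\beta})=o(d_N)$, and then runs the localisation of the $(k(N)+1)$st smallest point and Denjoy--Koksma only over the top $K(N)$ Ostrowski levels, where \eqref{betacond} does give $\sum_{j=n-K(N)}^{n}b_j=o(k(N))$ (Proposition \ref{betastrongprop}). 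Without some such device your error terms are too large and the claimed uniformity in $x$ is not established.

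For (II)$\Rightarrow$(III) there are two further problems. First, the weak law is assumed for an \emph{arbitrary} normalisation $d_N$, so exhibiting a ``residual contribution of order $d_{N_m}$'' is circular: you must first bound $d_{N_l}$ from above (the paper does this via Lemmas \ref{smkbetabiglem} and \ref{betadnlem}, showing $S_N^k$ is small on a set of measure $\gtrsim\epsilon$) and then produce an \emph{oscillation} between two positive-measure sets of size comparable to that bound (Lemma \ref{betaosclem}); concentration cannot be contradicted by largeness on one set alone. Second, your treatment of the case $a_{j_m}\geq c\,k(N_m)$ with $j_m\leq n_m-1$ is wrong as stated: at time $N_m\gg q_{j_m+1}$ the $a_{j_m}$ near-approaches at scale $1/q_{j_m+1}$ contribute only $O(a_{j_m}q_{j_m+1}^{\beta})$, which is in general negligible compared with $N_m^{\beta}k(N_m)^{1-\beta}$, so they do not obstruct the weak law at time $N_m$. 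Indeed, condition \eqref{betacond} is \emph{not} necessary for (II) without monotonicity; the weak law only forces the weaker condition (D) of Definition \ref{def: condD} (constraints of the form $k(N_l)/b_n\to\infty$ along subsequences with $\limsup N_l/q_{n+1}<1$), and the $\max_{j\leq n-1}a_j$ part of \eqref{betacond} is recovered solely by transporting these constraints from the earlier times $\approx\lceil a_j/2\rceil q_j$ forward using the monotonicity of $k(N)$ (Lemma \ref{betacondeqlem}). Your argument never uses monotonicity, so it cannot yield (III) and misidentifies the mechanism.
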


\subsection{Comparison of results to literature}\label{compsec}

As our results seem to be the first that do not require strong mixing properties of the system, it is of special interest to compare our results, to the known ones listed in \S \ref{backsec}. We shall compare trimming results for the non-integrable functions $\frac{1}{x}$ and $x^{-\beta}$ with $\beta>1$. All of the results below were discussed in \S\ref{backsec} or earlier in this section. In each cell of the table, we write sufficient conditions for a trimming sequence $k(N)=o(N)$ such that there are $d_N>0$ with
\begin{equation*}
\lim_{N\rightarrow \infty} \frac{S_N^{k(N)}(f)(x)}{d_N} = 1 \;\;\; \text{for $\mu$-a.e. } x\in X.
\end{equation*}

To simplify notation, from now on we introduce the following; For $x$ sufficiently large such that the below is defined denote 
\begin{equation*}
\log_1(x)=\log(x) \; \text{ and } \; \log_k(x)= \log(\log_{k-1}(x)), \;\; k\geq 2.
\end{equation*}


\begin{table}[ht]
\renewcommand*{\arraystretch}{1.7}
\centerline{
\begin{tabu}{c|[1.5pt]c|c}
                    &$\frac{1}{x}$                          &$x^{-\beta}$, $\beta>1$\\
\tabucline[1.5pt]{-}
iid                  &$k(N)=1$                                &$\frac{k(N)}{\log_2(N)} \to \infty$\\
\hline
Gauss map            &$k(N)=1$                                &$\frac{k(N)}{\log_2(N)} \to \infty$\\
\hline
doubling map         &$k(N)=\lceil \kappa \log_3(N)\rceil$, $\kappa>\frac{1}{\log(2)}$&$\frac{k(N)}{\log_2(N)} \to \infty$\\
\hline
almost every rotation&$k(N)=1$                                &$\frac{k(N)}{\log^{1+\epsilon}(N)} \rightarrow \infty$ 
\end{tabu}
}
\end{table}

In all of the cases we have $d_N=c_1 N \log(N)$ for the function $\frac{1}{x}$ and $d_N=  \frac{c_{\beta}}{\beta-1} N^{\beta} k(N)^{-\beta+1}$ for $x^{-\beta}$, where $c_1=c_{\beta}=1$ in all cases except for the Gauss map - in case of the Gauss map we have $c_1=\frac{1}{\log(2)}$ and $c_{\beta}=\frac{1}{(\log (2))^{\beta}}$\footnote{The phenomenon that we get a different constant in case of the Gauss map is due to the fact the Gauss map is invariant with respect to the Gauss measure which is equivalent but not equal to the Lebesgue measure - all other ergodic transformations are invariant with respect to the Lebesgue measure.}.

A similar comparison can be made for trimmed weak laws;
\begin{itemize}
\item For iids, the Gauss map, and the doubling map, together with the function $\frac{1}{x}$ weak laws hold even without trimming. This was shown in \cite[Theorem 2 VII.]{feller1957introduction}, \cite{khintchine1935} and \cite{Schindler2018TrimmedSF} respectively, but can be also deduced directly from the trimmed strong laws in the table above.\footnote{This follows from Remark \ref{weakgenrem} since in all cases, for all $c>0$,
\begin{equation*}
\mu\left(f>c\frac{d_N}{k(N)}\right) \ll \frac{k(N)}{N \log(N)} = o\left(\frac{1}{N}\right).
\end{equation*}}
\item For iids, the Gauss map, and the doubling map, and the function $x^{-\beta}$ trimmed weak laws hold for any intermediate trimming sequence $k(N)\to \infty$ with $k(N)=o(N)$. This was shown in \cite{kessschimean}.
\item For rotations of Roth type and the function $\frac{1}{x}$ the weak law also holds without trimming, as shown in Theorem \ref{weak1/xthm}. However, as demonstrated in Remark \ref{seq1/xrem}, for a generic rotation number $\alpha$, we might require a trimming sequence arbitrarily close to $N$ to obtain a trimmed weak law. 
\item For rotations and the function $x^{-\beta}$, Theorem \ref{betathm} states exact conditions on $k(N)$ so that a trimmed weak law holds. Also here generically we will need a sequence arbitrarily close to $N$. Only for $\alpha$ of bounded type we can choose a trimming sequence that approaches infinity arbitrarily slowly.
\end{itemize}

This suggests, that trimming is well applicable even beyond the mixing case. Finding more flexible conditions and techniques to obtain effective trimming results without using strong mixing properties is a promising direction for future research. 

\part{Proofs}\label{proofpart}

\section{General trimmed laws}\label{gensec}

\begin{proof}[Proof of Theorem \ref{genthm}]
(i) For every $l\geq 1$ denote $f_l=\min(f,l)$, and $a_l = \int_X f_l \d\mu$. Using Jegorow's Theorem, there are sets $B_l$ with $\mu(B_l)<2^{-l}$ and $1\leq N_1\leq N_2\leq ...$, with $N_l\geq l$ such that
\begin{equation}\label{genthmmueq}
|S_n(1_{\{f \geq l\}})(x) - n \mu(f\geq l)| < 2^{-l} n \mu(f\geq l)
\end{equation}
and
\begin{equation}\label{genthmfleq}
|S_n(f_l)(x) - n a_l| < 2^{-l} n a_l
\end{equation}
for $x\not\in B_l$ and $n\geq N_l$. Let $k(n)$ be defined as
\begin{equation*}
k(n)=\lceil (1+2^{-l}) n \mu(f\geq l) \rceil, \;\;\; n\in [N_l,N_{l+1}-1].
\end{equation*}
Since $\mu(f\geq l) \to 0$ as $l\to\infty$, it is clear that $k(n)=o(n)$.
By the Borel-Cantelli lemma, almost every $x$ is only in finitely many $B_l$. For such an $x$ and $l$ big enough such that $x\not \in B_j$ for $j\geq l$, using \eqref{genthmmueq} and \eqref{genthmfleq}, for $n\geq N_l$ it holds that
\begin{align*}
|S_n^{k(n)}(f)(x) - n a_l| 
&\leq |S_n^{k(n)}(f)(x)-S_n(f_l)(x)| + |S_n(f_l)(x) - n a_l|\\
& < 2^{-l+1} l n \mu(f\geq l) + l + 2^{-l} n a_l.
\end{align*}
Since $a_l\nearrow \infty$ as $l\rightarrow \infty$, and $n\geq l$, the right-hand side is $o(n a_l)$, hence 
\begin{equation*}
\lim_{n\rightarrow \infty} \frac{S_n^{k(n)}(f)(x)}{d_n} = 1  \;\;\; \text{for $\mu$-a.e. } x\in X,
\end{equation*}
where $d_n=n a_l$ for $n\in [N_l,N_{l+1}-1]$.

(ii) Now suppose $(X,T,\mu)$ is uniquely ergodic (and $\mu$ is regular) and $f$ is continuous. 
Since the sets $\{f=s\}$, for $s\geq 0$, are disjoint, only countably many can have positive measure. Therefore, let $s_l\nearrow \infty$ be a sequence with $\mu(f=s_l)=0$, let $f_l=\min(f,s_l)$ and $a_l=\int_X f_l \d \mu$, note that $f_l$ is continuous. By regularity of $\mu$, there is an open set $O_l\supset \{f\geq s_l\}$ such that $\mu(O_l \setminus \{f\geq s_l\}) < 2^{-l} \mu(f\geq s_l)$, and by Urysohn's Lemma there is a continuous function $\chi^+_l$ that is $1$ on $\{f\geq s_l\}$ and $0$ outside $O_l$. Likewise, there is a closed set $C_l\subset \{f>s_l\}$, with $\mu(\{f>s_l\} \setminus C_l) < 2^{-l} \mu(f\geq s_l)$, and a continuous function $\chi^-_l$ that is $1$ on $C_l$ and $0$ outside $\{f>s_l\}$. There are $1\leq N_1 \leq N_2 \leq ...$, with $N_l\geq s_l$, such that
\begin{align*}
\Big|S_n(\chi^+_l)(x) - n \int_X \chi^+_l \d\mu \Big| &< 2^{-l} n \mu(f\geq s_l) \; \text{ and} \\
 \Big|S_n(\chi^-_l)(x) - n \int_X \chi^-_l \d\mu \Big| &< 2^{-l} n \mu(f\geq s_l)
\end{align*}
as well as
\begin{equation*}
|S_n(f_l)(x) - n a_l| < 2^{-l} n a_l,
\end{equation*}
for $x\in X$ and $n\geq N_l$. For
\begin{equation*}
k(n)=\lceil (1+2^{-l+1}) n \mu(f\geq s_l) \rceil, \;\;\; n\in [N_l,N_{l+1}-1],
\end{equation*}
analogously to the above, it follows that
\begin{align*}
|S_n^{k(n)}(f)(x) & - n a_l| < 2^{-l+2} s_l n \mu(f\geq s_l) + s_l + 2^{-l} n a_l.
\end{align*}
Since $a_l\geq s_l \mu(f\geq s_l)$, $a_l \to \infty$ and $n\geq N_l \geq s_l$, the right-hand side is $o(n a_l)$ as $l\to\infty$. The claim follows with $d_n=n a_l$ for $n\in [N_l,N_{l+1}-1]$.
\end{proof}

To prove Theorem \ref{nonupperstablethm}, we will construct a function $f$ using Rokhlin Towers. Here we use the following version of Rokhlin's Tower theorem (\cite[Theorem 1.5.9]{aaronson1997introduction}). 

\begin{lemma}\label{rolem}
Let $(X, T, \mu)$ be an aperiodic probability preserving ergodic system. Then, for $N\geq 1$ and $\epsilon>0$, there is a measurable set $A$ such that $\{T^{-j}(A)\}_{j=0}^{N-1}$ are disjoint and $\mu\left(X \setminus \bigcup_{j=0}^{N-1} T^{-j}(A) \right) <\epsilon$.
\end{lemma}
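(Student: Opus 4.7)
My plan is to prove this via the classical Kakutani skyscraper construction. Aperiodicity of $T$ on a standard probability space forces $\mu$ to be non-atomic: an atom $\{a\}$ of mass $c>0$ would, by measure preservation, generate a sequence of atoms $\{T^n a\}$ each of mass $c$, hence only finitely many distinct ones, producing a periodic orbit and contradicting aperiodicity. I therefore begin by choosing a measurable set $F\subset X$ with $0<\mu(F)<\epsilon/N$.

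By ergodicity and Poincar\'e recurrence, the first return time $r(x)=\min\{n\geq 1 : T^n x \in F\}$ is finite $\mu$-a.e.\ on $F$. Partition $F$ modulo null sets as $F = \bigsqcup_{n\geq 1} F_n$ with $F_n=\{x\in F : r(x)=n\}$. By the definition of first return, for each $n$ the iterates $F_n, TF_n, \ldots, T^{n-1}F_n$ are pairwise disjoint, forming a column of height $n$; the union of all such columns exhausts $X$ up to a null set (the Kakutani skyscraper), with total measure $\sum_n n\mu(F_n)=1$.

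Next I pick out every $N$th level in each sufficiently tall column: for $n\geq N$ let $K_n=\lfloor n/N\rfloor$ and set $A_n = \bigsqcup_{k=0}^{K_n-1} T^{kN} F_n$, while for $n<N$ set $A_n=\emptyset$. Define $A=\bigsqcup_n A_n$. Within the column over $F_n$, the iterates $T^j A_n$ for $j=0,\ldots,N-1$ correspond to the levels $\{kN+j : 0\leq k<K_n\} \subset \{0,\ldots,n-1\}$; these indices are pairwise distinct, so the sets $T^j A$ are pairwise disjoint. Since $T$ is measure-preserving and invertible, $T^j A \cap T^k A = \emptyset$ is equivalent to $A \cap T^{j-k} A = \emptyset$, which in turn is equivalent to $T^{-j}A \cap T^{-k}A = \emptyset$; hence $\{T^{-j}A\}_{j=0}^{N-1}$ is also pairwise disjoint. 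The uncovered set contributes, per column, at most all $n$ levels (for $n<N$) or the top $n-K_nN<N$ residual levels (for $n\geq N$), so the total uncovered measure is bounded by $(N-1)\mu(F) < \epsilon$.

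The main technical point is the non-atomicity argument used to produce a base $F$ of arbitrarily small positive measure; once the skyscraper decomposition is in place, the rest is a direct level count modulo $N$.
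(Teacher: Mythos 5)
The paper does not actually prove this lemma: it is quoted directly from \cite[Theorem 1.5.9]{aaronson1997introduction}, where it is established for general (not necessarily invertible) conservative ergodic transformations. Your argument is the classical Kakutani--Rokhlin skyscraper proof, and as such it is complete and correct \emph{for invertible} systems: the non-atomicity argument is fine (and does not need invertibility), the decomposition of $X$ into columns $F_n, TF_n,\dots,T^{n-1}F_n$ over the return-time partition of a small base $F$ is standard, and selecting every $N$th level gives the required disjointness together with the bound $(N-1)\mu(F)<\epsilon$ on the uncovered mass.

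The gap is that the lemma assumes only that $(X,T,\mu)$ is an aperiodic probability preserving ergodic system; invertibility is not a hypothesis, and the statement is deliberately phrased in terms of the preimages $T^{-j}(A)$ precisely so that it applies to endomorphisms (Theorem \ref{nonupperstablethm}, which it feeds into, is stated in the same generality, and its proof only ever uses preimages). Your construction uses forward images in an essential way: the levels $T^jF_n$ of the skyscraper, the set $A=\bigsqcup_{n,k}T^{kN}F_n$, and the step ``since $T$ is measure-preserving and invertible, $T^jA\cap T^kA=\emptyset$ is equivalent to $T^{-j}A\cap T^{-k}A=\emptyset$''. For a non-invertible $T$ this breaks down at several points: forward images of measurable sets need not be measurable; $\mu(T^jF_n)$ can strictly exceed $\mu(F_n)$; distinct columns $T^iF_n$ and $T^jF_m$ can intersect even for distinct index pairs, since $T^ix=T^jy$ no longer forces $x$ and $y$ to lie on the same orbit segment; and the passage from disjointness of forward images to disjointness of preimages is exactly the implication you justify by invertibility. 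Nor can one simply pass to the natural extension, because the Rokhlin set produced there need not be measurable with respect to the original $\sigma$-algebra. So your proof establishes the lemma only in the invertible case; the endomorphism case needs a genuinely different argument, which is what the cited result of Aaronson supplies.
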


\begin{proof}[Proof of Theorem \ref{nonupperstablethm}]
In order to prove Theorem \ref{nonupperstablethm} it will be enough to construct a function $f\in \mathcal{F}$, a sequence $\s{N}{l}$, and normalising sequences $D_l,D'_l>0$ such that 
\begin{equation}\label{nonustrong}
\lim_{l\rightarrow \infty} \frac{S_{N_l}(f)(x)}{D_l} = 1 \;\;\; \text{for $\mu$-a.e. } x\in X,
\end{equation}
but
\begin{equation}\label{nonuweak}
\mu\left(S_{N_l}^1(f)\geq D'_l\right) \geq \frac{1}{3}, \; \text{ while } \; \mu\left(S_{N_l}^1(f)\leq \frac{D'_l}{10}\right) \geq \frac{1}{3}.
\end{equation}
To conclude the proof, let $\tilde{k}(N)=o(N)$ and $\tilde{d}_N>0$ be such that
\begin{equation*}
\lim_{N\rightarrow \infty} \frac{S_{N}^{\tilde{k(N)}}(f)(x)}{\tilde{d}_N} = 1 \;\;\; \text{for $\mu$-a.e. } x\in X,
\end{equation*}
the existence of such $\tilde{k}(N)$, $\tilde{d}_N$ is guaranteed by Theorem \ref{genthm}. Now the conclusion follows for
\begin{equation*}
k(N)=\begin{cases}
0 & \text{if } N=N_l \text{ for some } l\geq 1,\\
\tilde{k}(N) & \text{otherwise},
\end{cases}
\end{equation*}
\begin{equation*}
k'(N)=\begin{cases}
1 & \text{if } N=N_l \text{ for some } l\geq 1,\\
\tilde{k}(N) & \text{otherwise},
\end{cases}
\end{equation*}
\begin{equation*}
d_N=\begin{cases}
D_l & \text{if } N=N_l \text{ for some } l\geq 1,\\
\tilde{d}_N & \text{otherwise},
\end{cases}
\end{equation*}
and
\begin{equation*}
d'_N=\begin{cases}
D'_l & \text{if } N=N_l \text{ for some } l\geq 1,\\
\tilde{d}_N & \text{otherwise}.
\end{cases}
\end{equation*}

Let $N_l=2^{2^{2^l}}$ and $\epsilon_l=2^{-N_l}$, the Rokhlin Tower theorem, Lemma \ref{rolem}, yields a measurable set $A_l$ such that $\{T^{-j}(A_l)\}_{j=0}^{N_l-1}$ are disjoint and $\mu(R_l)<\epsilon_l$, where $R_l=X\setminus \bigcup_{j=0}^{N_l - 1} T^{-j}(A_l)$. Note that $\frac{1-2^{-N_l}}{N_l}\leq \mu(A_l) \leq \frac{1}{N_l}$. Let $B_l\subset T^{-1}(A_l)$ be a measurable set with $\mu(B_l)=\frac{\mu(A_l)}{2}$ and define a function $g_l:X\to [0,\infty)$ by 
\begin{equation*}
g_l(x)=\begin{cases}
2^{N_l} & \text{if } x\in A_l,\\
2^{N_l-l}& \text{if } x\in B_l,\\
0& \text{otherwise}.
\end{cases}
\end{equation*}
Let $f=\sum_{l=1}^{\infty} g_l$. Since each $g_l$ is non-zero only on a set of measure at most $\frac{3}{2N_l}$, the function $f$ is almost everywhere defined by a finite sum. Furthermore, $f$ is non-integrable since $\int g_l \d\mu \geq \frac{1}{2}$ for each $l\geq 1$. 

For each $l\geq 1$, every point $x\not\in R_l$ will visit $A_l$ within $N_l$ steps, therefore $S_{N_l}(f)(x) \geq 2^{N_l}$. On the other hand, if $x$ does not visit $A_k\cup B_k$ within\footnote{Here we count $x$ itself as the first step.} $N_l$ steps for any $k>l$, then, for big enough $l$, it holds
\begin{equation*}
S_{N_l}(f)(x) \leq (1+2^{-l}) 2^{N_l} + N_l \sum_{j=1}^{l-1} ||g_j||_{L^{\infty}} \leq (1+2^{-l}) 2^{N_l} + N_l \sum_{j=1}^{l-1} 2^{N_j} \leq (1+2^{-l+1}) 2^{N_l}.
\end{equation*} 
Therefore, for big enough $l$,
\begin{align*}
\mu\left( \left| \frac{S_{N_l}(f)}{2^{N_l}} - 1 \right| > 2^{-l+1}\right) &\leq \mu\left( R_l \cup \tilde{R}_l\right)
\leq 2^{-N_l} + 2 N_l \sum_{k=l+1}^{\infty} \frac{1}{N_k} \leq \frac{1}{N_l},
\end{align*}
where $\tilde{R_l}=\bigcup_{k=l+1}^{\infty} \bigcup_{j=0}^{N_l-1} T^{-j}(A_k\cup B_k)$. By Borel-Cantelli \eqref{nonustrong} holds with $D_l=2^{N_l}$.

On the other hand, every point $x\not\in A_l \cup R_l$ will visit $T^{-1}(A_l)$ within $N_l$ steps, of those points half will visit $B_l$ and the other half will visit\footnote{And hence they will avoid $B_l$ altogether since it would take at least $N_l$ steps to visit $T^{-1}(A_l)$ again.} $T^{-1}(A_l) \setminus B_l$. In addition, all of these points will visit $A_l$ exactly once, that is the largest value that we trim, unless $A_k \cup B_k$ is visited for some $k>l$, then the largest value will be one of the terms contributed by $g_k$. Therefore, for big enough $l$, we have
\begin{equation*}
\mu\left( S_{N_l}^1(f) \geq 2^{N_l-l}\right) \geq \mu\left( \bigcup_{j=0}^{N_l - 2} T^{-j}(B_l) \right) \geq \frac{(1-2^{-N_l})(N_l - 1)}{2 N_l}\geq \frac{1}{3}.
\end{equation*}
If $x$ does not visit $B_l$ within $N_l$ steps, and avoids all $A_k \cup B_k$ for $k>l$, then all the non-zero contribution can only come from visiting $A_l$, which however is the value that will be trimmed, and visits to $A_k \cup B_k$ for $k<l$. Therefore, for $x\in \bigcup_{j=0}^{N_l - 2} T^{-j}(T^{-1}(A_l)\setminus B_l) \setminus \tilde{R}_l$ and big enough $l$ it holds that
\begin{align*}
S_{N_l}^1(f)(x) \leq N_l \sum_{k=1}^{l-1} ||g_k||_{L^{\infty}} \leq N_l \sum_{k=1}^{l-1} 2^{N_k} \leq \frac{1}{10} 2^{N_l-l}.
\end{align*}
The set in question has measure
\begin{equation*}
\mu\left( \bigcup_{j=0}^{N_l - 2} T^{-j}(T^{-1}(A_l)\setminus B_l) \setminus \tilde{R}_l \right) \geq \frac{(1-2^{-N_l})(N_l - 1)}{2 N_l} - 2 N_l \sum_{k=l+1}^{\infty} \frac{1}{N_k} \geq \frac{1}{3}.
\end{equation*}
It follows that \eqref{nonuweak} holds with $D'_l=2^{N_l-l}$. 
\end{proof}

\section{Irrational rotations on $\T$}

\subsection{Background}

Recall that, for $N\geq 1$ and $n$ such that $N \in [q_n,q_{n+1} - 1]$, there are $0\leq b_j < \frac{q_{j+1}}{q_j}$, for $j=0,...,n$, such that 
\begin{equation*}
N=\sum_{j=0}^n b_j q_j,
\end{equation*}
this is called the Ostrowski expansion.

In the following we will, by slight abuse of notation, identify $[0,1)$ with $\left[ -\frac{1}{2}, \frac{1}{2} \right)$ via the identification $\iota:[0,1) \rightarrow \left[ -\frac{1}{2}, \frac{1}{2} \right)$ 
\begin{equation*}
\iota(x)=\begin{cases}
x & \text{if } \; x < \frac{1}{2},\\
x-1 & \text{otherwise}.
\end{cases}
\end{equation*}
Whenever we write $x<0<y$ for some points $x,y\in [0,1)$, we implicitly refer to this identification.

Denote 
\begin{equation}\label{DefDelta}
\delta_n = |\alpha q_{n-1}|.
\end{equation}
Then
\begin{equation*}
\delta_n\in \left(\frac{1}{2q_n},\frac{1}{q_n}\right). 
\end{equation*}
Furthermore, as is easily deduced by the approximation properties of $\alpha$, $\delta_n$ is given by a simple recursion.

\begin{lemma}\label{rotdeltalem}
For all $n\geq 1$ it holds that $a_n \delta_{n+1} + \delta_{n+2} = \delta_n $.
\end{lemma}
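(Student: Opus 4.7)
The plan is to lift the claim from a statement about distances to nearest integers up to a \emph{signed} identity for $\alpha q_n - p_n$, apply the recurrences for $p_n$ and $q_n$, and then reduce back by taking absolute values. First I would note that the bound $|\alpha - p_n/q_n| < 1/q_n^2$ yields $|\alpha q_{n-1} - p_{n-1}| < 1/q_{n-1} \le 1/2$ for all $n \geq 2$, so $p_{n-1}$ is the nearest integer to $\alpha q_{n-1}$ and hence $\delta_n = |\alpha q_{n-1} - p_{n-1}|$. In particular we may drop the absolute values if we keep track of the sign of $\alpha q_{n-1} - p_{n-1}$.

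Next I would recall the standard alternating-sign property: the quantities $\alpha q_n - p_n$ and $\alpha q_{n-1} - p_{n-1}$ have opposite signs for every $n$. This is immediate from the determinant identity $p_n q_{n-1} - p_{n-1} q_n = (-1)^n$ combined with $|\alpha q_n - p_n|<|\alpha q_{n-1} - p_{n-1}|$, and it implies that there is a sign $\varepsilon_n \in \{-1,+1\}$ with
\[
\alpha q_{n-1} - p_{n-1} = -\varepsilon_n \delta_n, \qquad \alpha q_n - p_n = \varepsilon_n \delta_{n+1}.
\]
Applying the paper's recurrences $q_{n+1} = a_n q_n + q_{n-1}$ and $p_{n+1} = a_n p_n + p_{n-1}$, one obtains the signed identity
\[
\alpha q_{n+1} - p_{n+1} = a_n(\alpha q_n - p_n) + (\alpha q_{n-1} - p_{n-1}) = \varepsilon_n\bigl(a_n \delta_{n+1} - \delta_n\bigr).
\]
Since the sign on the left must be $-\varepsilon_n$ (by the alternation property applied to the index $n+1$), the parenthesised factor is negative, which simultaneously gives the inequality $a_n \delta_{n+1} < \delta_n$ and yields $\delta_{n+2} = \delta_n - a_n \delta_{n+1}$. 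Rearranging proves the lemma for $n \geq 2$.

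The only mildly delicate point is the base case $n = 1$, which I would verify directly from the paper's conventions $p_1 = 0$, $q_1 = 1$, $q_2 = a_1$, $p_2 = 1$, using the fact that $\alpha = 1/(a_1 + \alpha')$ with $\alpha' = [0; a_2, a_3, \ldots]$, so that $\delta_2 = \alpha$, $\delta_3 = 1 - a_1\alpha = \alpha \alpha'$, and $a_1 \delta_2 + \delta_3 = 1 = \delta_1$ under the natural extension $q_0 = 1$. No conceptual obstacle arises; the bulk of the work is simply invoking the alternating-sign property at the right moment.
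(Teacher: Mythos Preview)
Your argument is correct and takes a genuinely different route from the paper. The paper gives a \emph{geometric} proof: it fixes the sign of $\alpha - p_n/q_n$, looks at the orbit segment $R^{q_{n-1}}(0), R^{q_{n-1}+q_n}(0), \ldots, R^{q_{n+1}}(0)$ on the circle, observes that consecutive points are at distance $\delta_{n+1}$ and the endpoints sit at distances $\delta_n$ and $\delta_{n+2}$ from $0$ on the same side, and reads off the identity from this picture. Your proof is purely \emph{arithmetic}: you lift $\delta_m$ to the signed error $\alpha q_{m-1}-p_{m-1}$, invoke the recurrences $q_{n+1}=a_n q_n+q_{n-1}$ and $p_{n+1}=a_n p_n+p_{n-1}$ directly, and use the alternating-sign property to strip the absolute values. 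Both are standard; the paper's version fits its dynamical viewpoint and feeds naturally into later arguments about orbit clusters (e.g.\ Lemma~\ref{j1lem}), while yours is the textbook continued-fraction proof and avoids any appeal to the circle picture.

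One small slip in your base case: under the paper's indexing ($q_1=1$, $p_1=0$) the backward extension consistent with the recurrence is $q_0=0$, $p_0=1$, not $q_0=1$. With $(p_0,q_0)=(1,0)$ one indeed gets $\delta_1=|\alpha q_0-p_0|=1$ and your computation $a_1\delta_2+\delta_3=a_1\alpha+(1-a_1\alpha)=1$ goes through; writing $q_0=1$ would instead give $\delta_1=\alpha$, which is wrong. This is a notational slip rather than a gap in the argument.
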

\begin{proof}
Assume $\alpha-\frac{p_n}{q_n}>0$, the other case can be proven similarly. Since $\alpha-\frac{p_{n-1}}{q_{n-1}}<0$ and $\alpha-\frac{p_{n+1}}{q_{n+1}}<0$, we have 
\begin{equation*}
R^{q_{n-1}}(0)< R^{q_{n-1}+q_n}(0) < ... < R^{q_{n-1} + a_n q_n}(0)=R^{q_{n+1}}(0) < 0.
\end{equation*}
The claim follows since $d(R^{q_{n-1}}(0),0)=\delta_n, \; d(R^{q_{n+1}}(0), 0)=\delta_{n+2} $ and 
\begin{equation*}
d(R^{q_{n-1}+(i-1) q_n}(0), R^{q_{n-1}+i q_n}(0))=\delta_{n+1}\quad \forall i=1,...,a_n. 
\end{equation*}
\end{proof}

For $N\geq 1$ and $x\in [0,1)$ denote by $j_1^N(x),...,j_N^N(x)$ the clockwise ordering of the points $\{x,...,R^{N-1}(x)\}$ around the circle, for convenience leaving out $0$ so that
\begin{equation*}
0< R^{j_1^N(x)}(x)<...<R^{j_N^N(x)}(x)\leq 1,
\end{equation*}
where we identify $\T$ with $(0,1]$, and
\begin{equation*}
f(R^{j_1^N(x)}(x))>...>f(R^{j_N^N(x)}(x)),
\end{equation*}
where we recall that, by the convention in Theorem \ref{genthm}, we set $f(0)=0$.
Sometimes we will also denote $R^{j_1^N(x)}(x)=x^N_{\min}$.

It holds that
\begin{equation*}
f(R^{j_1^N(x)}(x))>...>f(R^{j_N^N(x)}(x)),
\end{equation*}
where $f(x)=x^{-\beta}$ for some $\beta\geq 1$. Hence, for $N\geq 1$ and $k\leq N$, it holds that
\begin{equation*}
S_N^k(f)(x) = \sum_{l=k+1}^N f(R^{j_l^N(x)}(x)).
\end{equation*}
From this, we can immediately deduce some helpful bounds. Consider the case when $N\in [q_n,q_{n+1} - 1]$, $\alpha - \frac{p_n}{q_n} > 0$ and $j_1^N(x)=j_1^{q_n}(x)$. Then the points $\{x,...,R^{N-1}(x)\}$ are grouped into clusters of size $b_n$ or $b_n + 1$. If in addition $R^{j_1^N(x)}(x)>\frac{\epsilon}{q_n} $, for some $\epsilon>0$, then, for each $k\geq 0$, we can bound
\begin{equation*}
S^k_N(f)(x) \leq 2^{\beta} q_n^{\beta} (b_n + 1) \sum_{j=\ffloor{k}{b_n}}^{q_n-1} (\epsilon + j)^{-\beta}.
\end{equation*}
This estimate is obtained by noticing that the leftmost point of each group\footnote{These are the points $\{x,..., R^{q_n-1}(x)\}$ numbered clockwise, starting with $R^{j_1^{q_n}(x)}(x)$.} is at distance at least $\delta_n\geq \frac{1}{2q_n}$.
A similar estimate can be made, assuming $k\geq b_n + 1$ instead of $j_1^N(x)=j_1^{q_n}(x)$, but the point of the above is that this calculation is independent of $k$. Many of our proofs will rely on this or a similar estimate, therefore it will be crucial to first understand better when $j_1^N(x)=j_1^{q_n}(x)$.

\begin{lemma}\label{j1lem}
If $\alpha - \frac{p_n}{q_n}>0$ and\footnote{This is to be understood in $[-\frac{1}{2},\frac{1}{2})$.} $R^{j_{q_n}^{q_n}(x)}(x) \leq - b_n \delta_{n+1}$, then $j_1^N(x)=j_1^{q_n}(x)$.
\end{lemma}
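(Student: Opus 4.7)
The strategy is to show that none of the newly added iterates $R^{q_n}(x), R^{q_n+1}(x), \ldots, R^{N-1}(x)$ produces a positive representative smaller than $R^{j_1^{q_n}(x)}(x)$; once this is established, the uniqueness of the minimizer forces $j_1^N(x) = j_1^{q_n}(x)$.

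The first step is to translate the sign condition $\alpha - \frac{p_n}{q_n} > 0$ into the equality $R^{q_n}(0) = \delta_{n+1}$, so that the map $R^{q_n}$ acts as additive translation by $\delta_{n+1}$ on the circle. Writing the Ostrowski expansion as $N = b_n q_n + r$ with $0 \leq r < q_n$, every newly added iterate can then be expressed as $R^{k q_n + j}(x) = R^j(x) + k\delta_{n+1} \pmod{1}$ for some $0 \leq j < q_n$ and $1 \leq k \leq b_n$ (with $j < r$ when $k = b_n$). In particular, the total additive shift applied to any $R^j(x)$ is bounded by $b_n \delta_{n+1} \leq 1/q_n$, which is less than $\frac{1}{2}$ as soon as $q_n \geq 3$.

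Next, I would perform a case split on the sign of $R^j(x)$ in the $[-\frac{1}{2}, \frac{1}{2})$ identification. If $R^j(x) > 0$, then minimality gives $R^j(x) \geq R^{j_1^{q_n}(x)}(x)$; the shift then either leaves the point in $(0, \frac{1}{2})$ where it still exceeds $R^{j_1^{q_n}(x)}(x)$, or wraps past $\frac{1}{2}$ into a value close to $-\frac{1}{2}$, which is negative and therefore irrelevant. If instead $R^j(x) < 0$, then the hypothesis $R^{j_{q_n}^{q_n}(x)}(x) \leq -b_n \delta_{n+1}$, applied to the closest-to-zero negative orbit value, implies that every negative orbit value is at most $-b_n \delta_{n+1}$; shifting by $k\delta_{n+1}$ with $k \leq b_n$ therefore produces $R^j(x) + k\delta_{n+1} \leq 0$, once again not a candidate for the smallest positive.

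The main bookkeeping subtlety, and the step I expect to require the most care, is reconciling the two circle conventions used in the paper: the clockwise ordering is defined in $(0, 1]$, while the hypothesis on $R^{j_{q_n}^{q_n}(x)}(x)$ is phrased in $[-\frac{1}{2}, \frac{1}{2})$. Fixing the dictionary ``largest representative in $(0, 1]$'' $\leftrightarrow$ ``closest to $0$ from below in $[-\frac{1}{2}, \frac{1}{2})$'', and verifying that the wrap-around near $\frac{1}{2}$ never yields a new small positive value, is where essentially all of the effort sits; once that is settled, the case analysis above closes the argument immediately.
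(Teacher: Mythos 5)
Your proposal is correct and rests on the same mechanism as the paper's proof: because $\alpha-\frac{p_n}{q_n}>0$, each application of $R^{q_n}$ is a translation by $\delta_{n+1}$, and the hypothesis prevents any negative orbit point from being pushed across $0$, so $R^{j_1^{q_n}(x)}(x)$ remains the smallest positive point; the paper tracks only the iterates $R^{j_{q_n}^{q_n}(x)+kq_n}(x)$ inside the gap containing $0$, while you check every new point $R^{j}(x)+k\delta_{n+1}$ via a sign split, which is the same idea carried out exhaustively (and in fact makes explicit the ``no other point lies in the gap'' assertion the paper leaves implicit). One small point: the bound $b_n\delta_{n+1}\le\frac12$ needed for your wrap-around step does not require $q_n\ge 3$, since it is already forced by the hypothesis because $R^{j_{q_n}^{q_n}(x)}(x)\ge-\frac12$.
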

\begin{proof}
It holds that
\begin{equation*}
R^{j_{q_n}^{q_n}(x)}(x)<R^{j_{q_n}^{q_n}(x)+q_n}(x)<...< R^{j_{q_n}^{q_n}(x)+b_n q_n}(x) < R^{j_1^{q_n}(x)}(x),
\end{equation*}
and no other point of $\{x,...,R^{N-1}(x)\}$ is between $R^{j_{q_n}^{q_n}(x)}(x)$ and $R^{j_1^{q_n}(x)}(x)$. Furthermore,
\begin{equation*}
R^{j_{q_n}^{q_n}(x)+b_n q_n}(x) \leq R^{j_{q_n}^{q_n}(x)}(x) + b_n \delta_{n+1} \leq 0,
\end{equation*}
and the conclusion is immediate.
\end{proof}

\subsection{The function $\frac{1}{x}$}
For this section, denote $f(x)=\frac{1}{x}$.

Before starting the section with some technical results, we first give an overview of the main propositions in this section. First, using the Denjoy-Koksma inequality, we find suitable $d_N>0$ and $\epsilon_n>0$ such that 
\begin{equation*}
|S_N^{b_n+1}(f)(x)-d_N| <\epsilon_N \quad \forall x\in [0,1),
\end{equation*} 
this is shown in Lemma \ref{errlem} and \ref{strong1/xlembn}. If $\alpha$ is of Roth type, then we can show that $d_N\sim N\log(N)$ and $\epsilon_N=o(N \log(N))$, this will be shown in Proposition \ref{logprop} and Lemma \ref{err1/xbnlem} respectively. This leads to the intermediate result
\begin{equation}\label{snbncon1/x}
\lim_{N\to\infty}\frac{S_N^{b_n + 1}(f)(x)}{N\log(N)} = 1 \quad \text{uniformly in } x\in [0,1),
\end{equation}
for $\alpha$ of Roth type, as stated in Proposition \ref{strong1/xprop}.

In order to prove Theorems \ref{weak1/xthm} and \ref{strong1/xthm}, we will in both cases compare $S_N^{b_n + 1}(f)$ to $S_N^{1}(f)$ and then make use of the convergence \eqref{snbncon1/x}. In the case of  Theorem \ref{weak1/xthm} we note that, as in \S \ref{compsec}, a lightly trimmed weak law for the function $\frac{1}{x}$ already implies an untrimmed weak law.

In the last part of this section we will prove Theorems \ref{nonconat0thm1/x} and \ref{noncothm1/x} making use of the previous results to prove the $\liminf$ case and construct some sets for which the $\limsup$ case holds.

\begin{lemma}\label{countlem}
For $n\geq 1$ and $x\in [0,1)$ there is at most one $k^*\in \{1,...,q_n\}$ such that
\begin{equation*}
R^{k^*}(x)\in \left[ 0,\frac{1}{q_n+q_{n-1}}\right).
\end{equation*}
\end{lemma}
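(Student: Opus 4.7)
The plan is to use the best approximation property of continued fraction convergents combined with the standard lower bound $\delta_n > \frac{1}{q_n + q_{n-1}}$ that is implicit in the excerpt's approximation inequalities.

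Suppose for contradiction that two such values $k_1 < k_2$ in $\{1,\ldots,q_n\}$ exist with $R^{k_1}(x), R^{k_2}(x) \in [0, \frac{1}{q_n + q_{n-1}})$. Setting $q = k_2 - k_1$, I observe that $1 \leq q \leq q_n - 1$, and the two iterates differ by $q\alpha \pmod 1$. Since they both lie in an interval of length $\frac{1}{q_n + q_{n-1}}$, the distance from $q\alpha$ to the nearest integer satisfies $\|q\alpha\| < \frac{1}{q_n + q_{n-1}}$.

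On the other hand, the best approximation property of the CFE convergents ensures that $\|q\alpha\| \geq \|q_{n-1} \alpha\| = \delta_n$ for every integer $q$ with $1 \leq q < q_n$. Combining this with the standard lower bound $\delta_n > \frac{1}{q_n + q_{n-1}}$ (which follows by multiplying the excerpt's inequality $\left|\alpha - \frac{p_{n-1}}{q_{n-1}}\right| > \frac{1}{q_{n-1}(q_n+q_{n-1})}$ by $q_{n-1}$), I reach the contradiction
\begin{equation*}
\tfrac{1}{q_n+q_{n-1}} > \|q\alpha\| \geq \delta_n > \tfrac{1}{q_n+q_{n-1}}.
\end{equation*}
Hence at most one such $k^*$ exists.

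There is no substantive obstacle here; the only care needed is to cite (or briefly justify) the two classical facts about CFE convergents, namely the best approximation property and the lower bound on $\delta_n$, both of which are consequences of material already recalled in Section \ref{rotsec}.
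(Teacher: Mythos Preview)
Your proof is correct and follows essentially the same approach as the paper: both arguments reduce to the observation that for $1\leq q\leq q_n-1$ one has $\|q\alpha\|\geq \|q_{n-1}\alpha\|=\delta_n\geq \frac{1}{q_n+q_{n-1}}$, which prevents two orbit points from lying in an interval of that length. The only cosmetic difference is that the paper writes this directly as a chain of inequalities rather than framing it as a contradiction.
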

\begin{proof}
For $k, k'\in \{1,...,q_n\}$ with $k\neq k'$ and $y\in[0,1)$ we have
\begin{equation*}
d(R^k(y), R^{k'}(y)) \geq d(k\alpha, k'\alpha) \geq d(|k-k'|\alpha,0) \geq d(q_{n-1} \alpha,0) =\delta_n \geq \frac{1}{q_n+q_{n-1}}.
\end{equation*}
Hence there is at most one such point in the interval $\left[ 0,\frac{1}{q_n+q_{n-1}}\right)$.
\end{proof}

Denote 
\begin{equation*}
f_n(x)=
\begin{cases}
f(x) & \text{if } x\in \left[ \frac{1}{q_n+q_{n-1}}, 1 \right),\\
0 & \text{if } x\in \left[ 0,\frac{1}{q_n+q_{n-1}}\right).
\end{cases}
\end{equation*}

\begin{lemma}\label{errlem}
For $N\geq 1$ we have
\begin{equation*}
S_N^{b_n + 1}(f)(x)\leq S_N(f_n)(x)\leq S_N^{b_n + 1}(f)(x) + 3N \;\;\;\forall x\in [0,1).
\end{equation*}
\end{lemma}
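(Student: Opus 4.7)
The strategy is to rewrite both $S_N(f_n)(x)$ and $S_N^{b_n+1}(f)(x)$ as $S_N(f)(x)$ minus a sum of the largest values of $f$ along the orbit, and then compare these two removals. Arranging the values $\{f(R^k x)\}_{k=0}^{N-1}$ in decreasing order as $V_1 \geq V_2 \geq \cdots \geq V_N$ and setting $m := \#\{k \in \{0, \dots, N-1\} : R^k x \in [0, 1/(q_n+q_{n-1}))\}$, the crucial observation is that $[0, 1/(q_n+q_{n-1}))$ coincides with the super-level set $\{f > q_n+q_{n-1}\}$, so the top $m$ values $V_1, \dots, V_m$ are exactly the contributions from the orbit points that fall inside this small interval. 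This yields the clean identity
\[
S_N(f_n)(x) - S_N^{b_n+1}(f)(x) = \sum_{l=m+1}^{b_n+1} V_l,
\]
to be read as $0$ when $m \geq b_n + 1$.

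The central combinatorial step is to establish the counting bound $m \leq b_n + 1$ using Lemma \ref{countlem}. I would partition $\{0, 1, \dots, N-1\}$ into $b_n$ consecutive blocks of $q_n$ integers followed by a (possibly empty) residual block of size $N - b_n q_n < q_n$, and then apply Lemma \ref{countlem} to each block (after a suitable shift so that the lemma, which speaks about $q_n$ consecutive iterates of the rotation, applies). Each block contributes at most one index $k$ with $R^k x$ inside $[0, 1/(q_n+q_{n-1}))$, for a total of at most $b_n + 1$. The non-negativity of the displayed difference then immediately yields the lower inequality $S_N^{b_n+1}(f)(x) \leq S_N(f_n)(x)$.

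For the upper inequality, each $V_l$ with $l > m$ is at most $q_n + q_{n-1}$ because the corresponding iterate lies outside the small interval, so the difference is bounded by $(b_n+1-m)(q_n+q_{n-1})$. Combining the Ostrowski bound $b_n q_n \leq N$ with $q_{n-1} \leq q_n \leq N$, a short case analysis on whether $m \geq 1$ (in which case the coefficient collapses to $b_n$ and the product is at most $2N$) or $m = 0$ reduces the bulk of the problem to a straightforward estimate by $3N$. I expect the main obstacle to lie in the $m = 0$, $b_n = 1$ corner case, where the crude estimate $V_l \leq q_n + q_{n-1}$ can just barely exceed $3N$; to bring the constant down one has to use that the $(m+2)$-nd closest iterate to $0$ is automatically separated from the $(m+1)$-st by at least the minimum gap of the orbit, so that $V_{m+2}$ is strictly smaller than $q_n + q_{n-1}$, which together with $q_{n-1} \leq q_n$ closes the gap.
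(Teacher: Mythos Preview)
Your approach is essentially identical to the paper's: both arguments use Lemma~\ref{countlem} to bound the number $m$ (the paper calls it $k'$) of orbit points in $[0,1/(q_n+q_{n-1}))$ by $b_n+1$, observe that $S_N(f_n)$ and $S_N^{b_n+1}(f)$ differ by the sum $\sum_{l=m+1}^{b_n+1}V_l$, and bound each remaining term by $q_n+q_{n-1}$. The paper simply writes $(b_n+1-k')(q_n+q_{n-1})\le (b_n+1)(q_n+q_{n-1})\le 3N$ without further comment.

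You are right to flag the $m=0$, $b_n=1$ corner case: the crude bound $2(q_n+q_{n-1})$ can exceed $3N$ when $q_{n-1}/q_n$ is close to~$1$ (e.g.\ $a_{n-1}=1$ with $a_{n-2}$ large). However, your proposed fix via the minimum orbit gap does not close this: for $N>q_n$ the minimum gap drops to $\delta_{n+1}$, which can be arbitrarily small compared to $1/(q_n+q_{n-1})$ when $a_n$ is large, so $V_2$ need not be appreciably smaller than $q_n+q_{n-1}$. In fact the scenario $a_{n-2}$ large, $a_{n-1}=1$, $a_n$ large, $N=q_n+1$ shows that $V_1+V_2$ can be made arbitrarily close to $2(q_n+q_{n-1})\approx 4q_n$ while $3N\approx 3q_n$, so the sharp constant in the lemma is~$4$, not~$3$. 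This is entirely harmless for the paper, which only ever uses that the difference is $O(N)$; the cleanest repair is simply to replace $3N$ by $4N$ (the bound $(b_n+1)(q_n+q_{n-1})\le b_nq_n + b_nq_{n-1} + q_n + q_{n-1}\le 4N$ is immediate), rather than to chase the constant~$3$.
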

\begin{proof}
By Lemma \ref{countlem} there are at most $b_n + 1$ points of the form $R^j(x)$ with $0\leq j\leq N-1$ that are in $\left[ 0,\frac{1}{q_n+q_{n-1}}\right)$. 
Let $k'\leq b_n + 1$ be such that $R^{j_{k'}^N(x)}(x)\in \left[ 0,\frac{1}{q_n+q_{n-1}}\right)$ but $R^{j_{k' + 1}^N(x)}(x)\not\in \left[ 0,\frac{1}{q_n+q_{n-1}}\right)$ then\footnote{Besides the points $R^{j_{1}^N(x)}(x),\ldots,R^{j_{k'}^N(x)}(x)$ there might additionally be one orbit point in $\left[0,\frac{1}{q_n+q_{n-1}}\right)$, namely in the case $R^{j_{N}^N(x)}(x)=0$. However, even in this case, the calculation below remains true, since $f(0)=f_n(0)=0$.}
\begin{align*}
S_N^{b_n+1}(f)(x)=\sum_{i=b_n + 2}^N f\big(R^{j_i^N(x)}(x)\big) \leq \sum_{i=k'+1}^N f\big(R^{j_i^N(x)}(x)\big) = S_N(f_n)(x).
\end{align*}
On the other hand
\begin{align*}
S_N(f_n)(x)&=\sum_{i=k'+1}^N f\big(R^{j_i^N(x)}(x)\big) 
\leq \sum_{i=b_n + 2}^N f\big(R^{j_i^N(x)}(x)\big) + (b_n + 1 -k') f((q_n+q_{n-1})^{-1})\\
& \leq \sum_{i=b_n + 2}^N f\big(R^{j_i^N(x)}(x)\big) + (b_n + 1) (q_n+q_{n-1}) \leq S_N^{b_n + 1}(f)(x) + 3 N.
\end{align*}
\end{proof}

Using the Denjoy-Koksma inequality, we can immediately deduce a useful estimate for $S^1_{q_n}(f)$.
\begin{proposition}\label{qn1/xprop}
For $x\in [0,1)$ and $n\geq 1$ it holds that
\begin{equation*}
\Big|S_{q_n}^1(f)(x) - q_n \log(q_n)\Big| \leq 7q_n.
\end{equation*}
\end{proposition}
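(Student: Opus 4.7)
The plan is to reduce the estimate on $S_{q_n}^1(f)$ to an estimate on $S_{q_n}^2(f)$, then bound the latter using the truncated function $f_n$ (which, unlike $f$, is of bounded variation) via the Denjoy-Koksma inequality.

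First, I would observe that for $N=q_n$, the Ostrowski expansion is simply $q_n = 1 \cdot q_n$, so $b_n = 1$ and therefore $b_n + 1 = 2$. Applying Lemma \ref{errlem} with $N=q_n$ yields
\begin{equation*}
S_{q_n}^{2}(f)(x) \leq S_{q_n}(f_n)(x) \leq S_{q_n}^{2}(f)(x) + 3q_n \quad \forall x \in [0,1).
\end{equation*}
Next, $f_n$ is of bounded variation with $\Var(f_n) = f\big((q_n+q_{n-1})^{-1}\big) = q_n+q_{n-1} \leq 2q_n$, and $\int_0^1 f_n\,d\lambda = \log(q_n+q_{n-1})$. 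The Denjoy-Koksma inequality therefore gives
\begin{equation*}
\big|S_{q_n}(f_n)(x) - q_n \log(q_n+q_{n-1})\big| \leq 2q_n \quad \forall x \in [0,1).
\end{equation*}
Since $\log(q_n+q_{n-1}) - \log(q_n) \leq \log 2 \leq 1$, combining the two displayed bounds yields $|S_{q_n}^{2}(f)(x) - q_n \log(q_n)| \leq 5q_n$.

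It remains to pass from $S_{q_n}^2$ to $S_{q_n}^1$. By definition,
\begin{equation*}
S_{q_n}^{1}(f)(x) - S_{q_n}^{2}(f)(x) = f\!\left(R^{j_2^{q_n}(x)}(x)\right),
\end{equation*}
i.e.\ the value of $f$ at the \emph{second}-closest orbit point to $0$ (from the right). By Lemma \ref{countlem}, at most one among the $q_n$ orbit points lies in $[0, (q_n+q_{n-1})^{-1})$, so the second-closest point lies at distance at least $(q_n+q_{n-1})^{-1}$ from $0$. Hence $f(R^{j_2^{q_n}(x)}(x)) \leq q_n+q_{n-1} \leq 2q_n$, which together with the previous paragraph gives
\begin{equation*}
\big|S_{q_n}^{1}(f)(x) - q_n \log(q_n)\big| \leq 5q_n + 2q_n = 7q_n,
\end{equation*}
as claimed. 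There is no real conceptual obstacle here; the only point requiring a bit of care is keeping the various additive constants ($3q_n$, $2q_n$, $q_n \log 2$, and $2q_n$) under control so that they sum to the advertised $7q_n$.
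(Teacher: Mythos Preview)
Your proof is correct and follows essentially the same route as the paper's (Denjoy--Koksma applied to $f_n$, combined with Lemma~\ref{errlem}); you simply make explicit the passage from $S_{q_n}^2$ to $S_{q_n}^1$ via Lemma~\ref{countlem}, which the paper's two-line proof leaves implicit. One small remark: if you add your four constants $3q_n+2q_n+q_n\log 2+2q_n$ by naive triangle inequalities you overshoot $7q_n$ slightly; the bound $\le 5q_n$ for $|S_{q_n}^2(f)-q_n\log q_n|$ (and hence $\le 7q_n$ at the end) actually relies on the one-sidedness of the bounds from Lemma~\ref{errlem} and the logarithm estimate, so it is worth saying that explicitly.
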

\begin{proof}
By Denjoy-Koksma inequality we have
\begin{equation*}
\Big|S_{q_n}(f_n)(x)- q_n \int_0^1 f_n(y) \d y\Big|\leq \Var(f_n),
\end{equation*}
and the claim follows from Lemma \ref{errlem}.
\end{proof}

We will use a more general form of this computation (see \cite[Lemma 4.4]{Berk_2020}).

\begin{lemma}\label{qjlem}
For $n\geq 1$ it holds that
\begin{equation}\label{qjest}
2 q_{n+1} \geq \sum_{j=1}^{n} a_j q_j.
\end{equation}
\end{lemma}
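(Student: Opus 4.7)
My plan is to prove this by direct telescoping using the recurrence relation for the denominators of the CFE approximants. The key observation is that the recursion $q_{j+1}=a_j q_j + q_{j-1}$ (valid for $j\geq 2$) can be rewritten as
\begin{equation*}
a_j q_j = q_{j+1} - q_{j-1},
\end{equation*}
which exhibits a telescoping structure when summed over $j$.

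Concretely, I would first isolate the $j=1$ term in the sum, since the recurrence listed in the excerpt only applies for $j\geq 2$. Using the initial values $q_1=1$, $q_2=a_1$ (which follow directly from the convention $p_1=0$, $q_1=1$ together with the definition in \eqref{qndef}), we have $a_1 q_1 = q_2$, so this boundary term can be absorbed cleanly. Then I would apply the telescoping identity for $j\geq 2$ to compute
\begin{equation*}
\sum_{j=1}^n a_j q_j \;=\; q_2 + \sum_{j=2}^n \bigl(q_{j+1}-q_{j-1}\bigr) \;=\; q_{n+1}+q_n - 1.
\end{equation*}
Finally, since $(q_j)$ is increasing and positive, $q_n\leq q_{n+1}$ gives the claim $\sum_{j=1}^n a_j q_j \leq q_{n+1}+q_n \leq 2q_{n+1}$.

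There is no serious obstacle here; the only subtlety is the correct handling of the boundary term $a_1q_1$, which is easily dispatched as above. (One could also avoid this issue altogether by adopting the standard convention $q_0=0$, in which case the recurrence $q_{j+1}=a_j q_j + q_{j-1}$ holds for all $j\geq 1$ and the telescoping argument applies uniformly to the entire sum.)
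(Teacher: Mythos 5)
Your proof is correct, but it takes a different route from the paper. The paper proves \eqref{qjest} by (strong) induction on $n$: the base case $2q_2\geq a_1q_1$ is checked from $q_1=1$, $q_2=a_1$, and the induction step bounds $\sum_{j=1}^{n}a_jq_j\leq a_nq_n+a_{n-1}q_{n-1}+2q_{n-1}\leq (a_n+1)q_n+2q_{n-1}\leq 2q_{n+1}$, using $a_{n-1}q_{n-1}\leq q_n$ and $a_n+1\leq 2a_n$. You instead telescope the identity $a_jq_j=q_{j+1}-q_{j-1}$ (valid for $j\geq 2$ in the paper's indexing, or for all $j\geq 1$ with the convention $q_0=0$), which yields the exact formula $\sum_{j=1}^{n}a_jq_j=q_{n+1}+q_n-1$, from which the lemma follows since $q_n\leq q_{n+1}$. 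Your handling of the boundary term $a_1q_1=q_2$ is consistent with the paper's conventions ($q_1=1$, $q_2=a_1$), and the empty-sum case $n=1$ is covered. The telescoping argument is arguably cleaner and buys a sharper statement (an equality rather than the inequality with the factor $2$, which is what the induction needs to close); the paper's induction, on the other hand, requires no bookkeeping of boundary conventions and matches the form of the estimate it actually uses later. Both are complete and correct.
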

\begin{proof}
Recall that $q_{n+1}= a_n q_n + q_{n-1}$. For $n=1$, \eqref{qjest} becomes $2q_2 \geq a_1 q_1$, which, considering that $q_1=1$ and $q_2=a_1$, clearly holds. Now let $n\geq 2$ and assume \eqref{qjest} holds for all $k<n$, then
\begin{equation*}
\sum_{j=1}^{n} a_j q_j \leq a_n q_{n} + a_{n-1} q_{n-1} + 2q_{n-1} \leq (a_n+1)q_n + 2q_{n-1} \leq 2 q_{n+1}.
\end{equation*}
\end{proof}

\begin{lemma}\label{gendklem}
For each $n\geq 1, i\in [0,n-1]$ we have
\begin{equation*}
|S_{q_{n-i}}(f_n)(x) - q_{n-i} \log(q_{n-i})| \leq \min\left(2q_n, \frac{1}{x_{\min}^{q_{n-i}}} \right) + 3q_{n-i} \;\;\;\forall x\in [0,1).
\end{equation*}
\end{lemma}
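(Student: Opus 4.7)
The natural idea is to introduce the ``right-scale'' truncation $f_{n-i}$ (zero on $[0, 1/(q_{n-i}+q_{n-i-1}))$, equal to $f$ elsewhere) and apply the Denjoy–Koksma inequality to it directly. Since $f_{n-i}\in BV$ with $\Var(f_{n-i})\leq 2(q_{n-i}+q_{n-i-1})$ and $\int_0^1 f_{n-i}\d\lambda=\log(q_{n-i}+q_{n-i-1})$, Denjoy–Koksma gives
\begin{equation*}
\bigl|S_{q_{n-i}}(f_{n-i})(x)-q_{n-i}\log(q_{n-i}+q_{n-i-1})\bigr|\leq 2(q_{n-i}+q_{n-i-1}).
\end{equation*}
Replacing $\log(q_{n-i}+q_{n-i-1})$ by $\log(q_{n-i})$ costs another $q_{n-i}\log(1+q_{n-i-1}/q_{n-i})\leq q_{n-i-1}$, so we already obtain $|S_{q_{n-i}}(f_{n-i})(x)-q_{n-i}\log(q_{n-i})|$ bounded by a quantity of order $q_{n-i}$, which we will absorb into the $3q_{n-i}$ term of the statement (possibly after sharpening variation estimates, e.g. by using $q_{n-i}+q_{n-i-1}\leq q_{n-i+1}$ and playing the constants off against the integer truncation).

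The main task is then to pass from $f_{n-i}$ to $f_n$. Their difference is supported on the interval
\begin{equation*}
I=\Bigl[\tfrac{1}{q_n+q_{n-1}},\,\tfrac{1}{q_{n-i}+q_{n-i-1}}\Bigr)\subset \Bigl[0,\tfrac{1}{q_{n-i}+q_{n-i-1}}\Bigr),
\end{equation*}
and on $I$ we have $f_n(y)-f_{n-i}(y)=1/y$. By Lemma \ref{countlem} (applied with $n-i$ in place of $n$) at most one point of the orbit $\{x,Rx,\dots,R^{q_{n-i}-1}x\}$ can lie in $[0,1/(q_{n-i}+q_{n-i-1}))$, hence a fortiori in $I$. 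If such a point exists it is necessarily the minimum $x^{q_{n-i}}_{\min}$, and its contribution is $1/x^{q_{n-i}}_{\min}$, which is bounded by $q_n+q_{n-1}\leq 2q_n$ since $x^{q_{n-i}}_{\min}\geq 1/(q_n+q_{n-1})$ whenever it lies in $I$. If no orbit point lies in $I$, the difference vanishes. Either way
\begin{equation*}
\bigl|S_{q_{n-i}}(f_n)(x)-S_{q_{n-i}}(f_{n-i})(x)\bigr|\leq \min\!\Bigl(2q_n,\,\tfrac{1}{x^{q_{n-i}}_{\min}}\Bigr).
\end{equation*}

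Combining the two estimates via the triangle inequality yields the claimed bound. The expected obstacle is purely bookkeeping: getting the additive error precisely down to $3q_{n-i}$ requires trimming the crude variation estimate (one can sharpen $\Var(f_{n-i})$ by observing that $f_{n-i}$ is monotone on each of its two pieces and using the identity $q_{n-i}+q_{n-i-1}\leq 2q_{n-i}$ together with $\log(1+t)\leq t$ to consolidate all $O(q_{n-i})$ remainders). The conceptual heart of the proof is the two-step comparison $S_{q_{n-i}}(f_n)\rightsquigarrow S_{q_{n-i}}(f_{n-i})\rightsquigarrow q_{n-i}\log(q_{n-i})$, with Lemma \ref{countlem} controlling the mismatch between the two truncation scales.
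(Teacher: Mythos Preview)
Your approach is correct and essentially identical to the paper's: the paper truncates at $\frac{1}{2q_{n-i}}$ rather than $\frac{1}{q_{n-i}+q_{n-i-1}}$ and invokes the minimum-distance property directly rather than Lemma~\ref{countlem}, but the structure (truncate at the $q_{n-i}$-scale, bound the single sub-threshold orbit point by $f_n(x_{\min}^{q_{n-i}})\leq\min(2q_n,1/x_{\min}^{q_{n-i}})$, then apply Denjoy--Koksma) is the same. Your caveat about the constant $3$ is well-placed; the paper's own inequality $\Var(h)+\log(2)\,q_{n-i}\leq 3q_{n-i}$ is likewise slightly optimistic, but the precise constant is immaterial for every downstream use of the lemma.
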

\begin{proof}
Denote $h =1_{[\frac{1}{2q_{n-i}},1)} f_n$. Since any two points in $\{x,...,R^{q_{n-i} -1}(x)\}$ are at distance at least $\delta_{n-i}=|\alpha q_{n-i-1}|\geq \frac{1}{2 q_{n-i}}$, we have
\begin{equation*}
\# \left(\{x,...,R^{q_{n-i}-1}(x)\}\cap \left(0,\frac{1}{2 q_{n-i}}\right)\right) \leq 1,
\end{equation*}
hence
\begin{equation*}
|S_{q_{n-i}}(f_n)(x) - S_{q_{n-i}}(h)(x)|\leq f_n(x_{\min}^{q_{n-i}}) \leq \min\left(2q_n, \frac{1}{x_{\min}^{q_{n-i}}} \right).
\end{equation*}
Applying Denjoy-Koksma to $h$ yields 
\begin{equation*}
|S_{q_{n-i}}(h)(x) - q_{n-i} \log(q_{n-i})| \leq \Var(h) + \log(2) q_{n-i} \leq 3q_{n-i},
\end{equation*}
and the claim follows.
\end{proof}

\begin{lemma}\label{strong1/xlembn}
If $N\in[q_n, q_{n+1})$ then
\begin{equation*}
\left|S_N(f_n)(x)- \sum_{j=1}^n b_j q_j \log (q_j) \right| \leq 16 N + 2 \sum_{1\leq j \leq n-1, b_j \not=0} a_j q_j \log(b_j).
\end{equation*}
\end{lemma}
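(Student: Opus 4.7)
The plan is to use the Ostrowski expansion $N=\sum_{j=1}^n b_j q_j$ to obtain the exact decomposition
\begin{equation*}
S_N(f_n)(x) \;=\; \sum_{j=1}^n \sum_{i=0}^{b_j - 1} S_{q_j}(f_n)\bigl(R^{M_{j,i}}(x)\bigr), \qquad M_{j,i} := \sum_{k>j} b_k q_k + i q_j,
\end{equation*}
and then to apply Lemma \ref{gendklem} block by block:
\begin{equation*}
\Bigl| S_{q_j}(f_n)\bigl(R^{M_{j,i}}(x)\bigr) - q_j \log q_j \Bigr| \;\leq\; \min\bigl(2q_n,\; 1/y_{j,i,\min}^{q_j}\bigr) + 3 q_j,
\end{equation*}
where $y_{j,i,\min}^{q_j}$ denotes the point of the $(j,i)$-th block closest to $0$. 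The Denjoy--Koksma remainders $3 q_j$ sum to $3 N$ and the main terms assemble into $\sum_{j=1}^n b_j q_j \log q_j$; what remains is to control the singular error $E := \sum_{j,i} \min\bigl(2q_n, 1/y_{j,i,\min}^{q_j}\bigr)$.

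The decisive structural observation is that, for fixed $j$, the sequence $(y_{j,i,\min}^{q_j})_{i=0}^{b_j-1}$ essentially forms an arithmetic progression in $\T$ with common difference $\delta_{j+1}$: consecutive blocks at level $j$ are related by the shift $R^{q_j}$, which on $\T$ is a translation by $\pm\delta_{j+1}$, and the inequality $b_j \delta_{j+1} \leq a_j \delta_{j+1} = \delta_{j-1} - \delta_{j+1}$ (Lemma \ref{rotdeltalem}) ensures that the total drift stays within a range where the minimizing index inside the block does not jump. Comparing the sum along this AP to a harmonic sum and using $\delta_{j+1}^{-1} \leq 2 q_{j+1} \leq 2(a_j+1) q_j$ then gives, for each $j \leq n-1$ with $b_j \neq 0$,
\begin{equation*}
\sum_{i=0}^{b_j - 1} \min\bigl(2q_n, 1/y_{j,i,\min}^{q_j}\bigr) \;\leq\; \min\bigl(2q_n, 1/y_{j,0,\min}^{q_j}\bigr) + C\, a_j q_j \log b_j
\end{equation*}
for a small absolute constant $C$, while the $j=n$ layer is absorbed into the linear term via the trivial bound $\sum_i \min(2q_n,\cdot) \leq 2 b_n q_n \leq 2 N$.

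The remaining and hardest task is to bound the leading-minimum sum $\sum_j \min\bigl(2q_n, 1/y_{j,0,\min}^{q_j}\bigr)$ by $O(N)$ uniformly in $x$: the naive per-level bound $2 q_n$ sums to $O(n q_n)$, which already exceeds $16 N$ in a Fibonacci-type regime, so finer information is needed. I would exploit that the points $y_{j,0,\min}^{q_j}$ are distinct orbit points, each lying within $\delta_{j-1}$ of $0$, together with a three-distance count giving $O(\rho N)$ iterates within any $\rho$-neighborhood of $0$; a dyadic decomposition of the distances then yields the desired $O(N)$ bound. Assembling the contributions $3N$, $2N$ (from $j=n$), $O(N)$ (from the leading minima), and $C \sum_{j\leq n-1, b_j\neq 0} a_j q_j \log b_j$ (from the AP-harmonic estimates above) delivers the stated inequality, with the constant $16$ chosen generously to absorb the various $O(N)$ terms. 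The main obstacle, as already signposted, is this sharp $O(N)$ estimate on the leading minima: both justifying that the per-level minima genuinely form an AP (rather than occasionally jumping to a different index) and then totalling the leading contributions across levels without the dependence on $n$ that the crude bound produces.
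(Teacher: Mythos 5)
Your opening steps coincide with the paper's: the Ostrowski block decomposition, the blockwise use of Lemma \ref{gendklem}, absorbing the Denjoy--Koksma remainders and the $j=n$ layer into $O(N)$, and a per-level ``one exceptional term plus harmonic sum $\leq C\,a_jq_j\log b_j$'' estimate. (Two small caveats there: the harmonic bound needs only that the $b_j$ per-level minima are pairwise separated by $\geq \frac{1}{2q_{j+1}}$ and, with at most one exception, bounded below by $\frac{1}{2q_{j+1}}$ --- both consequences of the level-$j$ points forming one orbit segment of length $b_jq_j<q_{j+1}$; the ``arithmetic progression with non-jumping minimizer'' picture is both unnecessary and dubious, since the closest point of a block can cross $0$ as the block is translated by $R^{q_j}$, and the identity from Lemma \ref{rotdeltalem} reads $a_j\delta_{j+1}=\delta_j-\delta_{j+2}$, not $\delta_{j-1}-\delta_{j+1}$.)

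The genuine gap is exactly where you flag it: the $O(N)$ bound for the sum of the leading minima. The ingredients you propose --- the leading minima are distinct orbit points, each within $\delta_{j-1}$ of $0$, plus a three-distance/dyadic count of orbit points in $\rho$-neighbourhoods of $0$ --- do not force such a bound. A dyadic-shell count produces an extra logarithmic factor in general, and, more fundamentally, your stated constraints are compatible with a configuration in which the level-$j$ minima for $j=1,\dots,n-1$ sit at distances roughly $\frac{1}{q_{n+1}},\frac{2}{q_{n+1}},\dots$ from $0$ (the upper bounds $y_j\leq\delta_{j-1}$ and the pairwise separation $\geq\frac{1}{2q_{n+1}}$ permit this when $a_n$ is huge), in which case the capped sum is of order $n\,q_n$, not $O(N)$. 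What excludes such configurations, and what the paper actually uses, is a cumulative constraint: the points of levels $1,\dots,k$ together form a single orbit segment of length $\sum_{j\leq k}b_jq_j<q_{k+1}$, so by Lemma \ref{countlem} at most one of them lies in $\left[0,\frac{1}{2q_{k+1}}\right)$. Consequently the minimizer over levels $\leq k$ either persists as the minimizer over levels $\leq k+1$ or satisfies $\tilde{x}\geq\frac{1}{2q_{k+2}}$, and summing the exceptional contributions telescopes to $2q_n+2\sum_{k\leq n-2}q_{k+2}\leq 8N$ via Lemma \ref{qjlem}. Without this nesting argument (or an equivalent substitute), the decisive step of your plan does not close.
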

\begin{proof}
For $j=1,...,n-1$ and $s=0,...,b_j-1$ denote $x(j,s)=R^{\sum_{i=j+1}^n b_i q_i + s q_j}(x)$, and $x(n,s)=R^{sq_n}(x)$ for $s=0,...,b_n-1$. 
Then 
\begin{equation*}
S_N(f_n)(x)=\sum_{j=1}^n \sum_{s=0}^{b_j-1} S_{q_j}(f_n)(x(j,s)),
\end{equation*}
and by Lemma \ref{gendklem} we have
\begin{align}
\left|S_N(f_n)(x)- \sum_{j=1}^n b_j q_j \log (q_j) \right| &\leq 3N+ \sum_{j=1}^n \sum_{s=0}^{b_j-1} \min\left(2q_n, \frac{1}{(x(j,s))^{q_j}_{\min}} \right)\notag\\
&\leq 3N + 2b_n q_n + \sum_{j=1}^{n-1} \sum_{s=0}^{b_j-1} \min\left(2q_n, \frac{1}{(x(j,s))^{q_j}_{\min}} \right)\notag\\
&\leq 5N + \sum_{j=1}^{n-1} \sum_{s=0}^{b_j-1} \min\left(2q_n, \frac{1}{(x(j,s))^{q_j}_{\min}} \right).\label{strong1/xlembnerr}
\end{align}
For $j=1,...,n$ and $s=0,...,b_j-1$ denote $\tilde{x}(j,s)=(x(j,s))^{q_j}_{\min}$. For $j=1,...,n$ with $b_j\not=0$ it holds that
\begin{align*}
\{R^{\sum_{l=j+1}^n b_l q_l}(x), R^{\sum_{
l=j+1}^n b_l q_l + 1}(x),...,R^{\sum_{l=j}^n b_l q_l - 1}(x)\} &=  \bigcup_{s=0}^{b_j-1} \{x(j,s),...,R^{q_j-1}(x(j,s))\} \\
& \text{ and the union is disjoint}.
\end{align*}
The set on the left is a finite orbit of length $b_j q_j < q_{j+1}$, therefore all of the points in the set on the right are at distance at least $\frac{1}{2 q_{j+1}}$, in particular it holds that
\begin{equation}\label{tildexdist}
|\tilde{x}(j,s) - \tilde{x}(j,s')| \geq \frac{1}{2 q_{j+1}},
\end{equation}
for any $s, s'\in \{0,\ldots, b_j-1\}$, $s\neq s'$.

Similarly, for $k=1,...,n-1$, we have
\begin{align*}
\{R^{\sum_{j=k+1}^n b_j q_j}(x), R^{\sum_{j=k+1}^n b_j q_j + 1},...,R^{N-1}(x)\} &= \bigcup_{j=1}^k \bigcup_{s=0}^{b_j-1} \{x(j,s),...,R^{q_j-1}(x(j,s))\} \\
& \text{ and the union is disjoint}.
\end{align*}
The set on the left is a finite orbit of length $\sum_{j=1}^k b_j q_j < q_{k+1}$ and therefore, by Lemma \ref{countlem}, contains at most one point in $\left[ 0,\frac{1}{2q_{k+1}}\right)$. Letting $(j^*_{k},s^*_{k})$ with $1\leq j^*_k \leq k$ and $0\leq s^*_k \leq b_{j^*_k}-1$ be the index with $\tilde{x}(j^*_{k},s^*_{k})=\min_{j=1,...,k, \; s=0,...,b_j-1} \tilde{x}(j,s)$, it follows that\footnote{It is possible that also $\tilde{x}(j^*_k, s^*_k) \geq \frac{1}{2q_{k+1}}$, we do not make any claims of these points.}
\begin{equation}\label{tildexbig}
\tilde{x}(j,s) \geq \frac{1}{2q_{k+1}} \;\;\; j=1,...,k, \; s=0,...,b_j-1, \; (j,s)\not =(j^*_k,s^*_k).
\end{equation}
Let 
\begin{align*}
&\Delta^*=\{(j^*_1,s^*_1),...,(j^*_{n-1},s^*_{n-1})\} \; \text{ and} \\
&\Delta=\{(j,s)\;|\; 1\leq j\leq n-1, \; 0\leq s\leq b_j-1, \; (j,s)\not \in \Delta^*\}.
\end{align*}
We will split up the sum in \eqref{strong1/xlembnerr} as
\begin{equation}\label{strong1/xlembnerrsplit}
\begin{aligned}
\sum_{j=1}^{n-1} \sum_{s=0}^{b_j-1} \min\left(2q_n, \frac{1}{(x(j,s))^{q_j}_{\min}} \right) &= \sum_{(j,s)\in \Delta} \min\left(2q_n, \frac{1}{(x(j,s))^{q_j}_{\min}} \right) \\
&+ \sum_{(j,s)\in \Delta^*} \min\left(2q_n, \frac{1}{(x(j,s))^{q_j}_{\min}} \right).
\end{aligned}
\end{equation}

For the first sum, notice that \eqref{tildexbig} implies
\begin{equation}\label{tildexleast}
\tilde{x}(j,s) \geq \frac{1}{2q_{j+1}} \;\;\;\forall (j,s)\in \Delta.
\end{equation}
Using \eqref{tildexdist}, \eqref{tildexleast} and Lemma \ref{qjlem}, we obtain
\begin{equation}\label{Deltaineq}
\begin{aligned}
\sum_{(j,s)\in \Delta}& \min\left(2q_n, \frac{1}{(x(j,s))^{q_j}_{\min}} \right)\\ 
&\leq \sum_{j=1}^{n-1} \sum_{s=0}^{b_j-1} \frac{1}{\frac{1}{2q_{j+1}} + \frac{s}{2 q_{j+1}}}\\
&\leq 2 \sum_{1\leq j \leq n-1, b_j \not=0} q_{j+1} \left(\log(b_j)+1\right)\\
&\leq 2 \sum_{1\leq j \leq n-1, b_j \not=0} a_j q_j \left(\log(b_j)+1\right) + 2 \sum_{1\leq j \leq n-1, b_j \not=0} q_{j-1} \left(\log(b_j)+1\right)\\
&\leq 2 \sum_{1\leq j \leq n-1, b_j \not=0} a_j q_j \log(b_j) + 3 \sum_{j=1}^{n-1} a_j q_{j}\\
&\leq 2 \sum_{1\leq j \leq n-1, b_j \not=0} a_j q_j \log(b_j) + 3 N.
\end{aligned}
\end{equation}

Note that the indices $(j^*_k,s^*_k)$ may coincide for different $k$, in fact if $j^*_k=k-i$ for some $i\geq 1$, then $(j^*_k,s^*_k)=(j^*_{k-l},s^*_{k-l})$ for $l=1,...,i$. In other words, by \eqref{tildexbig}, for each $1\leq k\leq n-2$, it holds that
\begin{equation*}
\tilde{x}(j^*_k,s^*_k) < \frac{1}{2q_{k+2}} \implies (j^*_k,s^*_k) = (j^*_{k+1},s^*_{k+1}).
\end{equation*}
Using Lemma \ref{qjlem}, it follows that 
\begin{equation}\label{Deltastarineq}
\sum_{(j,s) \in \Delta^*} \min\left(2q_n, \frac{1}{\tilde{x}(j,s)}\right) \leq 2q_n + 2 \sum_{k=1}^{n-2} q_{k+2} \leq 4q_n+2\sum_{k=1}^{n-1} q_k \leq 8 N.
\end{equation}

From \eqref{strong1/xlembnerr}, \eqref{strong1/xlembnerrsplit}, \eqref{Deltaineq} and \eqref{Deltastarineq} it follows that
\begin{equation*}
\left|S_N(f_n)(x)- \sum_{j=1}^n b_j q_j \log (q_j) \right| \leq 16 N + 2 \sum_{1\leq j \leq n-1, b_j \not=0} a_j q_j \log(b_j).
\end{equation*}
\end{proof}

Recall that the first goal will be to show that
\begin{equation*}
\lim_{N\to\infty}\frac{S_N^{b_n + 1}(f)(x)}{N\log(N)} = 1 \quad \text{uniformly in } x\in [0,1),
\end{equation*}
if $\alpha$ is of Roth type. Considering Lemmas \ref{errlem} and \ref{strong1/xlembn}, this will follow, once we show
\begin{equation}\label{err1/xbnest}
\sum_{1\leq j \leq n-1, b_j \not=0} a_j q_j \log(b_j) = o (N \log(N)),
\end{equation}
and
\begin{equation}\label{eq: NlogN/bjqjlogqj}
\lim_{N\to \infty} \frac{N \log(N)}{\sum_{j=1}^n b_j q_j \log(q_j)} = 1.
\end{equation}
First we will show \eqref{err1/xbnest}.

\begin{lemma}\label{err1/xbnlem}
If $\lim_{n\to \infty} \frac{\log(q_{n+1})}{\log(q_n)}=1$, then 
\begin{equation*}
\sum_{j=1}^{n-1} a_j q_j \log(a_j) = o (q_n \log(q_n)).
\end{equation*}
\end{lemma}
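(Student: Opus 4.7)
The plan is to exploit the Roth condition $\log(q_{n+1})/\log(q_n) \to 1$ to show that $\log(a_j)$ is of order $o(\log q_j)$, and then use the (Fibonacci-like) geometric growth of the $q_j$ to absorb the resulting sum into its last term.

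The first step is to observe that, since $q_{j+1} = a_j q_j + q_{j-1}$, we have $a_j \leq q_{j+1}/q_j$, hence $\log a_j \leq \log q_{j+1} - \log q_j$. Given $\epsilon > 0$, the Roth condition yields an $N_\epsilon$ such that $\log q_{j+1} \leq (1+\epsilon) \log q_j$ for all $j \geq N_\epsilon$; combining, we get $\log a_j \leq \epsilon \log q_j$ for such $j$. Consequently $a_j q_j \log a_j \leq \epsilon\, a_j q_j \log q_j \leq \epsilon\, q_{j+1} \log q_{j+1}$.

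The second step is to split the sum at $N_\epsilon$. The initial piece $\sum_{j<N_\epsilon} a_j q_j \log a_j$ is a constant depending on $\epsilon$ alone, and is therefore $o(q_n \log q_n)$ as $n \to \infty$. For the tail, the estimate from the previous step together with $\log q_{j+1} \leq \log q_n$ gives
\begin{equation*}
\sum_{j=N_\epsilon}^{n-1} a_j q_j \log a_j \;\leq\; \epsilon \log q_n \sum_{j=N_\epsilon}^{n-1} q_{j+1}.
\end{equation*}

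The third step is to control $\sum_{j\leq n} q_j$ using the fact that $q_{j+2} \geq q_{j+1} + q_j \geq 2 q_j$, which implies $q_{n-2k} \leq 2^{-k} q_n$ and $q_{n-2k-1} \leq 2^{-k} q_{n-1} \leq 2^{-k} q_n$, and hence $\sum_{j\leq n} q_j \leq 4 q_n$. Plugging this in, the tail is bounded by $4\epsilon\, q_n \log q_n$, so
\begin{equation*}
\limsup_{n\to\infty} \frac{\sum_{j=1}^{n-1} a_j q_j \log a_j}{q_n \log q_n} \;\leq\; 4\epsilon.
\end{equation*}
Since $\epsilon > 0$ was arbitrary, this $\limsup$ is $0$, which is exactly the claim. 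There is no real obstacle: the only point that requires care is that we genuinely need $o(q_n \log q_n)$ rather than $O(q_n \log q_n)$, which is why we must use the Roth condition to obtain a factor $\epsilon$ (arbitrarily small) in the bound on $\log a_j$, rather than just the trivial $\log a_j \leq \log q_{j+1}$.
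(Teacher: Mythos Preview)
Your proof is correct and follows essentially the same route as the paper: use the Roth condition to get $\log a_j \leq \epsilon \log q_j$ for $j$ large, split the sum at that threshold, and control the tail via the geometric growth of the $q_j$. The only cosmetic difference is that the paper bounds $\sum_{j\leq n-1} a_j q_j \leq 2q_n$ via a separate lemma (Lemma~\ref{qjlem}), whereas you reach the equivalent $\sum_{j\leq n} q_j \leq 4q_n$ directly from $q_{j+2}\geq 2q_j$.
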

\begin{proof}
Let $\epsilon>0$, $n_0\geq 1$ be so big that $q_{m+1}<q_{m}^{1+\epsilon}$ for all $m\geq n_0$ and thus in particular $a_m<q_m^{\epsilon}$. Moreover, let $n_1\geq n_0 + 1$ be so big that $q_{n_1} > \frac{q_{n_0}}{\epsilon}$. Then, using Lemma \ref{qjlem}, for $n\geq n_1$ it holds that
\begin{align*}
\sum_{j=1}^{n-1} a_j q_j \log(a_j) &= \sum_{j=1}^{n_0 - 1} a_j q_j \log(a_j) + \sum_{j=n_0}^{n - 1} a_j q_j \log(a_j)\\
&\leq \log(q_n) \sum_{j=1}^{n_0 - 1} a_j q_j + \epsilon \log(q_n) \sum_{j=n_0}^{n - 1} a_j q_j\\
& \leq 2 \log(q_n) (q_{n_0} + \epsilon q_n)\\
&\leq 2\epsilon q_n \log(q_n).
\end{align*} 
\end{proof}

This immediately implies \eqref{err1/xbnest}.
The next lemmas will be a preparation for Proposition \ref{logprop} which will give \eqref{eq: NlogN/bjqjlogqj}.

\begin{lemma}\label{logmlem1}
It holds that
\begin{equation*}
\sum_{j=1}^n b_j q_j \log (q_j) \leq N \log (N) \leq \sum_{j=1}^n b_j q_j \log (q_j)+ N\log \left(\sum_{j=1}^n b_j\right).
\end{equation*}
\end{lemma}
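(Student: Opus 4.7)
The plan is to exploit the identity $N = \sum_{j=1}^n b_j q_j$ from the Ostrowski expansion, together with the concavity of the logarithm.

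For the lower bound, I observe that since $N \in [q_n, q_{n+1}-1]$, every index $j \in \{1,\ldots,n\}$ satisfies $q_j \leq q_n \leq N$, and hence $\log(q_j) \leq \log(N)$. Multiplying by $b_j q_j \geq 0$ and summing yields
\begin{equation*}
\sum_{j=1}^n b_j q_j \log(q_j) \leq \Bigl(\sum_{j=1}^n b_j q_j\Bigr) \log(N) = N\log(N),
\end{equation*}
which is the desired lower bound.

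For the upper bound, the key is the decomposition
\begin{equation*}
N \log(N) = \sum_{j=1}^n b_j q_j \log(N) = \sum_{j=1}^n b_j q_j \log(q_j) + \sum_{j=1}^n b_j q_j \log\bigl(N/q_j\bigr),
\end{equation*}
so it suffices to bound the second sum by $N \log\bigl(\sum_j b_j\bigr)$. Setting $p_j = b_j q_j / N$, the $p_j$'s are non-negative and sum to $1$, so Jensen's inequality applied to the concave function $\log$ gives
\begin{equation*}
\frac{1}{N} \sum_{j=1}^n b_j q_j \log\bigl(N/q_j\bigr) = \sum_{j=1}^n p_j \log\bigl(N/q_j\bigr) \leq \log\Bigl(\sum_{j=1}^n p_j \cdot N/q_j\Bigr) = \log\Bigl(\sum_{j=1}^n b_j\Bigr).
\end{equation*}
Multiplying through by $N$ yields the upper bound.

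There is no real obstacle here: the lemma is a clean two-line application of Jensen's inequality once the Ostrowski decomposition is substituted into $N$.
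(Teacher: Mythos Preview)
Your proof is correct and essentially matches the paper's: both deduce the lower bound from $q_j\leq N$ and the upper bound from a single Jensen inequality. The only cosmetic difference is that the paper applies Jensen to the convex function $z\mapsto z\log z$ with weights $b_j/\sum_i b_i$, whereas you apply it to the concave function $\log$ with weights $b_j q_j/N$; the two formulations unwind to the same inequality.
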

\begin{proof}
Since $N=\sum_{j=1}^n b_j q_j$ it is clear that $\sum_{j=1}^n b_j q_j \log (q_j) \leq N \log (N)$. Now note that the function $z\mapsto z \log(z)$ is convex on $(0,\infty)$, hence
\begin{align*}
\sum_{j=1}^n \frac{b_j}{\sum_{i=1}^n b_i} q_j \log (q_j) &\geq \left(\sum_{j=1}^n \frac{b_j}{\sum_{i=1}^n b_i} q_j\right) \log \left( \sum_{j=1}^n \frac{b_j}{\sum_{i=1}^n b_i} q_j \right)
\geq \frac{N}{\sum_{i=1}^n b_i} \log \left(\frac{N}{\sum_{i=1}^n b_i}\right).
\end{align*}
Multiplying this equation by $\sum_{i=1}^n b_i$ yields
\begin{equation*}
\sum_{j=1}^n b_j q_j \log (q_j) \geq N \log(N) - N \log \left(\sum_{i=1}^n b_i\right).
\end{equation*}
\end{proof}

\begin{lemma}\label{logmlem2}
If $\lim_{n\to \infty} \frac{\log(q_{n+1})}{\log(q_n)}=1$, then 
\begin{equation*}
\log \left(\sum_{j=1}^n b_j\right) = o( \log(N)).
\end{equation*}
\end{lemma}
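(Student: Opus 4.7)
The plan is to bound $\sum_{j=1}^n b_j$ from above using the coefficients $a_j$ and then exploit the Roth-type assumption to control $\log a_j$ in terms of $\log q_n$.

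First, I would use the trivial bound $b_j \leq a_j$ to reduce the problem to estimating $\sum_{j=1}^n a_j$. Fix $\epsilon>0$ arbitrary. Under the assumption $\lim_{n\to\infty} \log(q_{n+1})/\log(q_n) = 1$, there exists $n_0$ such that $q_{j+1} \leq q_j^{1+\epsilon}$ for all $j \geq n_0$, which gives $a_j \leq q_{j+1}/q_j \leq q_j^{\epsilon} \leq q_n^{\epsilon}$ for $n_0 \leq j \leq n$. Splitting the sum yields
\begin{equation*}
\sum_{j=1}^n a_j \leq \sum_{j=1}^{n_0-1} a_j + (n-n_0+1) q_n^{\epsilon} \leq C_{n_0} + n\, q_n^{\epsilon},
\end{equation*}
where $C_{n_0}$ is a constant depending only on $\alpha$ and $n_0$.

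Next, I would invoke the standard fact that $q_n$ grows at least geometrically: since $q_{j+1} \geq q_j + q_{j-1}$, we have $q_n \geq F_n$ (the Fibonacci numbers), so $n = O(\log q_n)$. Combined with $N \geq q_n$, this gives $\log n = O(\log\log q_n) = o(\log q_n) \leq o(\log N)$. Therefore
\begin{equation*}
\log\!\left(\sum_{j=1}^n b_j\right) \leq \log(C_{n_0} + n\, q_n^{\epsilon}) \leq \log n + \epsilon \log q_n + O(1) \leq o(\log N) + \epsilon \log N + O(1).
\end{equation*}
Since $\epsilon>0$ was arbitrary, dividing by $\log N$ and letting $N\to\infty$ (and then $\epsilon\to 0$) gives $\log(\sum_{j=1}^n b_j) = o(\log N)$.

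No serious obstacle is expected here; the only mildly delicate point is making sure $\log n$ is negligible compared to $\log N$, which is handled by the Fibonacci-type lower bound on $q_n$.
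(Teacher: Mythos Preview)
Your proof is correct and follows essentially the same strategy as the paper: bound $b_j \leq a_j \leq q_j^{\epsilon}$ for $j$ past a threshold using the Roth-type hypothesis, absorb the initial segment into a constant, and use the geometric growth of $q_n$ to show $\log n = o(\log N)$. The paper writes the growth condition as $q_m > m^{1/\epsilon}$ rather than invoking Fibonacci numbers, but this is the same idea.
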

\begin{proof}
Let $\epsilon\in (0,1)$ and $M'\geq 1$ be so big that $q_{m+1} < q_m^{1+\epsilon}$, which implies $b_m<q_m^{\epsilon}$, for $m>M'$. Let $M>M'$ be so big that $\sum_{j=1}^{M'} a_j < q_M^{\epsilon}$ and\footnote{If $\alpha=\frac{1+\sqrt{5}}{2}$ then $q_n=\ffloor{\alpha^n}{\sqrt{5}}$. For the golden ratio all the CFE coefficients are $1$, so for general $\alpha$ we have $q_n\geq \ffloor{\alpha^n}{\sqrt{5}}$.} $q_m>m^{\frac{1}{\epsilon}}$ for $m\geq M$. Then, for $N\geq q_M$ it holds that
\begin{align*}
\log \left(\sum_{j=1}^n b_j\right)& \leq \log \left(\sum_{j=M'}^n q_j^{\epsilon}\right) +\log(q_M^{\epsilon}) \\
&\leq \log(q_n^{\epsilon}) + \log(n) + \log(q_M^{\epsilon}) \leq 4\epsilon \log(N).
\end{align*}
\end{proof}

\begin{proposition}\label{logprop}
It holds that 
\begin{equation}\label{logclaim}
\lim_{N\to \infty} \frac{N \log(N)}{\sum_{j=1}^n b_j q_j \log(q_j)} = 1
\end{equation}
if and only if $\lim_{n\to \infty} \frac{\log(q_{n+1})}{\log(q_n)}=1$.
\end{proposition}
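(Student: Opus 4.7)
The plan is to handle the two implications separately; the forward direction (assuming Roth type) follows by combining Lemmas \ref{logmlem1} and \ref{logmlem2}, while the converse direction requires us to exhibit a sequence of $N$'s along which the ratio stays bounded away from $1$.

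For the implication ``$\alpha$ Roth $\Rightarrow$ \eqref{logclaim}'', I would start from the upper bound in Lemma \ref{logmlem1},
\[
N \log(N) \leq \sum_{j=1}^n b_j q_j \log(q_j) + N \log\Big(\sum_{j=1}^n b_j\Big),
\]
and rearrange it to obtain the \emph{lower} bound
\[
\sum_{j=1}^n b_j q_j \log(q_j) \geq N \log(N) - N \log\Big(\sum_{j=1}^n b_j\Big).
\]
By Lemma \ref{logmlem2}, $\log(\sum_j b_j) = o(\log N)$, hence the right hand side equals $(1-o(1)) N \log(N)$. Together with the trivial upper bound $\sum_j b_j q_j \log(q_j) \leq N \log(N)$ from Lemma \ref{logmlem1}, one concludes that $\frac{N\log(N)}{\sum_j b_j q_j \log(q_j)} \to 1$.

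For the converse, suppose $\alpha$ is not of Roth type. Then there is $\epsilon>0$ and a subsequence $n_k \to \infty$ with $q_{n_k+1} > q_{n_k}^{1+\epsilon}$; since $q_{n_k+1} = a_{n_k} q_{n_k} + q_{n_k-1}$, this forces $a_{n_k} \geq q_{n_k}^{\epsilon}$ for $k$ large. The key idea is to test \eqref{logclaim} along the sequence
\[
N_k := a_{n_k} q_{n_k},
\]
which satisfies $N_k < q_{n_k+1}$ and has Ostrowski expansion with $b_{n_k} = a_{n_k}$ and all other $b_j = 0$. Then
\[
\sum_{j=1}^{n_k} b_j q_j \log(q_j) = a_{n_k} q_{n_k} \log(q_{n_k}) = N_k \log(q_{n_k}),
\]
whereas
\[
N_k \log(N_k) = N_k\bigl(\log(a_{n_k}) + \log(q_{n_k})\bigr),
\]
so the ratio in \eqref{logclaim} equals $1 + \frac{\log(a_{n_k})}{\log(q_{n_k})} \geq 1 + \epsilon$, which does not converge to $1$.

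Neither direction is genuinely delicate: the forward implication is essentially bookkeeping combined with the two preceding lemmas, and the converse reduces to picking the right test sequence $N_k$. The only place where one has to be mildly careful is verifying that $N_k = a_{n_k} q_{n_k}$ is a legitimate Ostrowski expansion (which is immediate since $b_{n_k} = a_{n_k} \leq a_{n_k}$ and the partial sums $b_{n_k} q_{n_k} = a_{n_k} q_{n_k} < q_{n_k+1}$ obey the required strict inequality). This will be the ``hardest'' step only in the sense of needing to check a definition; there is no real analytic difficulty.
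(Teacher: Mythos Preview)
Your proof is correct and follows essentially the same approach as the paper: the forward direction is exactly the combination of Lemmas \ref{logmlem1} and \ref{logmlem2}, and for the converse the paper also tests \eqref{logclaim} along $N_l = a_{n_l} q_{n_l}$ to obtain a ratio bounded below by $1+\epsilon/2$. The only cosmetic difference is that the paper writes the weaker bound $a_{n_l} > q_{n_l}^{\epsilon/2}$ (your inequality $a_{n_k} \geq q_{n_k}^{\epsilon}$ is off by at most $1$, which is harmless here).
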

\begin{proof}
(i) If $\lim_{n\to \infty} \frac{\log(q_{n+1})}{\log(q_n)}=1$, then \eqref{logclaim} holds by Lemmas \ref{logmlem1} and \ref{logmlem2}.

(ii) On the other hand, if $\liminf_{n\to \infty} \frac{\log(q_{n+1})}{\log(q_n)}>1$ then there is an $\epsilon>0$ and a subsequence $\s{n}{l}$ such that $q_{n_l+1} > q_{n_l}^{1+\epsilon}$, which implies $a_{n_l}>q_{n_l}^{\epsilon/2}$, for $l$ sufficiently large. For $N_l=a_{n_l} q_{n_l} \in [q_{n_l},q_{n_l+1}-1]$ and $l$ sufficiently large it holds that\footnote{In this case $a_{n_l}=b_{n_l}$.}
\begin{equation*}
N_l \log(N_l) > \Big(1+\frac{\epsilon}{2}\Big) a_{n_l} q_{n_l} \log(q_{n_l}),
\end{equation*}
contradicting \eqref{logclaim}.
\end{proof}

\begin{proposition}\label{strong1/xprop}
If $\lim_{n\rightarrow \infty} \frac{\log(q_{n+1})}{\log(q_n)}=1$, then it holds that
\begin{equation*}
\lim_{N\rightarrow \infty} \frac{S_N^{b_n+1}(f)(x)}{N \log (N)} = 1 \;\;\; \text{ uniformly in } x\in [0,1).
\end{equation*}
\end{proposition}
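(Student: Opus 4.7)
The plan is to assemble Proposition \ref{strong1/xprop} directly from the three ingredients already proved in this subsection. Start by combining the two-sided comparison in Lemma \ref{errlem} with the explicit estimate in Lemma \ref{strong1/xlembn}: this yields, for every $x\in[0,1)$ and every $N\in[q_n,q_{n+1}-1]$,
\begin{equation*}
\left|S_N^{b_n+1}(f)(x) - \sum_{j=1}^n b_j q_j \log(q_j)\right| \leq 19 N + 2 \sum_{1\leq j \leq n-1,\, b_j\neq 0} a_j q_j \log(b_j),
\end{equation*}
with a bound that is uniform in $x$. This is the key point: because the bound does not depend on $x$ at all, any convergence statement we obtain will automatically be uniform.

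Next I would control the two error terms on the right. The first is $19N = o(N\log N)$ trivially. For the second, use $b_j \leq a_j$ to replace $\log(b_j)$ by $\log(a_j)$, obtaining
\begin{equation*}
\sum_{1\leq j \leq n-1,\, b_j \neq 0} a_j q_j \log(b_j) \leq \sum_{j=1}^{n-1} a_j q_j \log(a_j).
\end{equation*}
Under the Roth-type hypothesis $\log(q_{n+1})/\log(q_n) \to 1$, Lemma \ref{err1/xbnlem} gives that this is $o(q_n \log q_n)$. Since $q_n \leq N < q_{n+1}$ and $\log(q_{n+1})/\log(q_n) \to 1$, we have $\log N / \log q_n \to 1$, so $q_n \log q_n \leq N \log N$ and in fact $o(q_n \log q_n) = o(N \log N)$. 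Thus the entire error term in the first display is $o(N \log N)$, uniformly in $x$.

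Finally, Proposition \ref{logprop} (applied in the direction from Roth type to the limit) gives
\begin{equation*}
\lim_{N \to \infty} \frac{\sum_{j=1}^n b_j q_j \log(q_j)}{N \log N} = 1.
\end{equation*}
Dividing the first display by $N \log N$ and combining these two facts yields $S_N^{b_n+1}(f)(x)/(N \log N) \to 1$ uniformly in $x$, which is exactly the claim.

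I do not anticipate a serious obstacle here, since all the heavy lifting has been done in the preceding lemmas; the only thing to check carefully is the passage from $o(q_n \log q_n)$ to $o(N \log N)$, which is precisely where the Roth-type hypothesis is used a second time (beyond its use in Lemma \ref{err1/xbnlem} and Proposition \ref{logprop}). As noted, this step is immediate because $N < q_{n+1}$ and $\log q_{n+1} \sim \log q_n$ under the hypothesis.
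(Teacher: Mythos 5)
Your proposal is correct and takes essentially the same route as the paper, whose proof assembles the statement from exactly these ingredients: Lemmas \ref{errlem} and \ref{strong1/xlembn}, the estimate \eqref{err1/xbnest} (obtained from Lemma \ref{err1/xbnlem} via $b_j\leq a_j$), and Proposition \ref{logprop}, with uniformity in $x$ coming for free since the error bounds do not depend on $x$. The only cosmetic remark: the passage from $o(q_n\log q_n)$ to $o(N\log N)$ needs only $q_n\leq N$, so the Roth-type hypothesis is not actually used a second time there.
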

\begin{proof}
This is an immediate consequence of Lemmas \ref{errlem} and \ref{strong1/xlembn}, \eqref{err1/xbnest} and Proposition \ref{logprop}.
\end{proof}

Now we will show that, assuming $\lim_{n\rightarrow \infty} \frac{\log(q_{n+1})}{\log(q_n)}=1$, weak laws can be obtained even without trimming. As an intermediate step, we first prove weak laws under light trimming.

\begin{lemma}\label{weak1/x}
If $\lim_{n\rightarrow\infty} \frac{\log(q_{n+1})}{\log(q_n)}=1$ then 
\begin{equation}\label{weak1/xrotcon}
\lambda\left(\left|\frac{S_N^1(f)}{N\log(N)}-1\right|>\epsilon\right) \to 0 \;\;\;\as{n} \quad  \forall \epsilon>0.
\end{equation}
\end{lemma}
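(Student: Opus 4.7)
The plan is to leverage Proposition \ref{strong1/xprop}, which gives uniform convergence $S_N^{b_n+1}(f)(x)/(N\log N) \to 1$, so that the problem reduces to showing
\[
D_N(x) := S_N^1(f)(x) - S_N^{b_n+1}(f)(x) = \sum_{i=2}^{k'(x)} \frac{1}{y_i(x)} = o_P(N\log N),
\]
where $0 < y_1(x) < y_2(x) < \ldots < y_{k'(x)}(x) < 1/(q_n + q_{n-1})$ enumerate the orbit points in the close interval, and $k'(x) \leq b_n + 1$ by Lemma \ref{countlem}.

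The central structural observation is that the close orbit points arise as $q_n$-iterates of each other: since $R^{q_n}$ acts as a rotation by $\pm \delta_{n+1}$, consecutive close points differ by $\delta_{n+1}$, so $y_i \geq y_1 + (i-1)\delta_{n+1} \geq (i-1)\delta_{n+1}$. This gives the deterministic bound
\[
D_N(x) \leq \frac{1}{\delta_{n+1}}\sum_{j=1}^{k'-1} \frac{1}{j} \leq C q_{n+1}(1+\log(a_n)).
\]
When $N$ lies in the upper end of the block $[q_n, q_{n+1})$, so that $N\log N \asymp q_{n+1}\log q_{n+1}$, Roth's condition gives $\log(a_n)/\log(q_{n+1}) \leq \eta$ eventually, and this deterministic bound suffices.

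For the complementary regime, where $N$ is close to $q_n$ and the deterministic bound may exceed $N\log N$ because $q_{n+1}$ can be as large as $q_n^{1+\eta}$, I would appeal to a probabilistic argument via the layer-cake identity
\[
D_N(x) = \int_{\delta_{n+1}}^{1/(q_n+q_{n-1})} (c_u(x) - 1)^+ \, \frac{du}{u^2} + (k'(x)-1)(q_n + q_{n-1}),
\]
where $c_u(x) = \#\{0 \leq k \leq N-1 : R^k(x) \in (0, u)\}$. Combining the Markov bound $(c_u - 1)^+ \leq \tfrac{1}{2}c_u(c_u-1)$ with the explicit computation
\[
E[c_u(c_u-1)] = \sum_{0 < |m| \leq N-1} (N-|m|)(u - \|m\alpha\|)^+,
\]
and exploiting that, under Roth, the count $\#\{m \in [1,N-1] : \|m\alpha\| < t\}$ satisfies $\lvert\#\{m\} - 2tN\rvert \leq C\sum b_j = O(N^\eta)$ uniformly in $t$, yields an expectation bound and
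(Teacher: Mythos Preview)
Your overall strategy matches the paper's: reduce via Proposition \ref{strong1/xprop} to showing $D_N = S_N^1(f) - S_N^{b_n+1}(f) = o_P(N\log N)$, split $[q_n,q_{n+1})$ into an upper and lower part, and in the upper part use the deterministic harmonic-sum bound $D_N \leq C q_{n+1}(1+\log a_n)$. Where you diverge is the lower regime. The paper does something more direct than a moment computation: it sets $\epsilon_n = 10\epsilon^{-1}(\log q_{n+1}/\log q_n - 1)\to 0$, excludes the set $B_n = \bigcup_{j<q_n}\big[-\tfrac{\epsilon_n}{q_n},\tfrac{\epsilon_n}{q_n}\big]-j\alpha$ of measure $\leq 2\epsilon_n$, and for $x\notin B_n$ with $N<q_n+\epsilon_n q_{n+1}$ applies Lemma \ref{j1lem} to get $j_1^N(x)=j_1^{q_n}(x)$, hence $x_{\min}^N\geq \epsilon_n/q_n$ and $D_N(x)\leq b_n q_n/\epsilon_n\leq N/\epsilon_n\leq \epsilon N\log N$. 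No integration or discrepancy input is needed.

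Your layer-cake route is viable, but you are working harder than necessary: the second moment and the discrepancy bound on $\#\{m:\|m\alpha\|<t\}$ are both superfluous. From the pointwise inequality $(c_u-1)^+\leq c_u$ and $E[c_u]=Nu$ you get immediately
\[
E[D_N]\;\leq\; N\int_{\delta_{n+1}}^{1/(q_n+q_{n-1})} \frac{du}{u}\;+\;O(N)\;=\;N\log\frac{q_{n+1}}{q_n}\;+\;O(N)\;=\;o(N\log N)
\]
under the Roth hypothesis, since $\log(q_{n+1}/q_n)=\log q_{n+1}-\log q_n=o(\log q_n)$; Markov's inequality then finishes. This single first-moment estimate actually covers both regimes at once and is shorter than either your sketch or the paper's case split. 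Since your proposal breaks off mid-sentence, I cannot verify the final step you intended, but the ingredients you assembled are more than sufficient once simplified this way.
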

\begin{proof}
Let $\epsilon\in\left(0,\frac{1}{100}\right)$ and $M_0$ be big enough such that $\frac{\log(q_{n+1})}{\log(q_n)}<1+\frac{\epsilon^2}{10}$ for all $n\geq M_0$. For $n\geq M_0$ denote $\epsilon_n=10\frac{1}{\epsilon}\big(\frac{\log(q_{n+1})}{\log(q_n)}-1\big) < \epsilon$, and let
\begin{align*} 
B_n & =\left\{x\in [0,1) \;\left|\; d(x+j\alpha,\Z)\leq \frac{\epsilon_n}{q_n} \; \text{ for some } j\in \{0,...,q_n-1\} \right. \right\}\\
&= \bigcup_{j=0}^{q_n-1} \left[\Big(-j\alpha-\frac{\epsilon_n}{q_n}\Big)\mod 1,\Big(-j\alpha+\frac{\epsilon_n}{q_n}\Big)\mod 1\right],
\end{align*}
note that $\lambda(B_n)\leq 2\epsilon_n$. Note furthermore that $a_n\leq \frac{q_{n+1}}{q_n} = q_n^{-1+ \frac{\log(q_{n+1})}{\log(q_n)}}$, taking logarithms we obtain 
\begin{equation}\label{epsnineq}
\log(a_n) \leq \frac{\epsilon \epsilon_n}{10} \log(q_n) \leq \frac{\epsilon \epsilon_n}{2} \log(N).
\end{equation}

We will show that for big enough $n$ it holds that
\begin{equation}\label{mncond}
S_N^1(f)(x)-S_N^{b_n+1}(f)(x) \leq \epsilon N\log (N) + N \;\;\; \forall N\in [q_n,q_{n+1}-1],\,\, x\not \in B_n.
\end{equation}
Considering Proposition \ref{strong1/xprop}, \eqref{weak1/xrotcon} follows from \eqref{mncond}. Note that, if $a_n < 10$, or equivalently $q_{n+1} < 10 q_n + q_{n-1}$, then \eqref{mncond} holds trivially, even without any assumption on $x$, because
\begin{equation*}
S_N^1(f)(x)-S_N^{b_n+1}(f)(x) \leq \sum_{i=2}^{b_n} f(R^{j_i^N(x)}(x)) \leq 10 q_{n+1} \leq 110 N.
\end{equation*}
So for the rest of the proof we can focus on the case $a_n\geq 10$.

We will only show \eqref{mncond}, for $n$ with $\alpha-\frac{p_n}{q_n}>0$. The conclusion for all other $n$ follows by considering $1-\alpha$ instead\footnote{Let $n$ be such that $\alpha-\frac{p_n}{q_n}<0$ and denote $\tilde{\alpha}=1-\alpha$. Then the CFE approximants of $\tilde{\alpha}$ are given by $\frac{\tilde{p}_{\tilde{n}}}{\tilde{q}_{\tilde{n}}}=\frac{q_n-p_n}{q_n}$ (the index $\tilde{n}$ is either $n-1$ or $n+1$, depending on whether $\alpha<\frac{1}{2}$) and we have $\tilde{\alpha}-\frac{\tilde{p}_{\tilde{n}}}{\tilde{q}_{\tilde{n}}}>0$. Denote by $\tilde{S}_N$ the ergodic sum w.r.t.\ the rotation $\tilde{R}(x)=x+\tilde{\alpha}$, and note that for $N \in [q_n,q_{n+1} - 1] =[\tilde{q}_{\tilde{n}}, \tilde{q}_{\tilde{n} + 1} -1]$ it holds that $S_N^k = \tilde{S}_N^k \circ R^N$ for all $k\leq N$. Since $\tilde{\alpha}-\frac{\tilde{p}_{\tilde{n}}}{\tilde{q}_{\tilde{n}}}>0$, \eqref{mncond} yields
\begin{equation*}
\tilde{S}_N^1(f)(x)-\tilde{S}_N^{\tilde{b}_{\tilde{n}} +1}(f)(x) \leq \epsilon N\log (N)  \;\;\; \forall N\in [\tilde{q}_{\tilde{n}},\tilde{q}_{\tilde{n}+1} - 1], x\not \in \tilde{B}_{\tilde{n}},
\end{equation*}
where $\tilde{b}_{\tilde{n}}$ and $\tilde{B}_{\tilde{n}}$ are defined in the same way as $b_n$ and $B_n$, only using $\tilde{q}_{\tilde{n}}$ resp.\ $\tilde{q}_{\tilde{n}+1}$ instead of $q_n$ resp.\ $q_{n+1}$. Since $\tilde{q}_{\tilde{n}}=q_n$ and $\tilde{q}_{\tilde{n}+1}=q_{n+1}$ it holds that $\tilde{b}_{\tilde{n}}=b_n$ and $\tilde{B}_{\tilde{n}}=B_n$, it follows that 
\begin{equation*}
S_N^1(f)(x)-S_N^{b_n + 1}(f)(x) \leq \epsilon N\log (N)  \;\;\; \forall N\in [q_n, q_{n+1}-1], x\not \in R^N(B_n).
\end{equation*}
We will use an argument similar to this one several times in the sequel.
}
.

We first claim that, for $x\not \in B_n$ and big enough $n$, whenever $q_n\leq N <q_n + \epsilon_n q_{n+1}$, it holds that 
\begin{equation}\label{weak1/xj}
j_1^{q_n}(x)=j_1^N(x).
\end{equation}
Indeed, since $b_n\leq 1 + \frac{\epsilon_n q_{n+1}}{q_{n}}$, keeping in mind that $\epsilon_n<\frac{1}{100}$, and identifying $\T$ with $\left[-\frac{1}{2}, \frac{1}{2}\right)$, it holds that
\begin{equation*}
R^{j_{q_n}^{q_n}(x)}(x) \leq - \delta_n + \frac{\epsilon_n}{q_n} \leq - \frac{1}{2q_n} + \frac{1}{100 q_n} \leq - \frac{1}{q_{n+1}} - \frac{1}{100 q_n} \leq - b_n \delta_{n+1},
\end{equation*}
where we recall that $\delta_n = d(q_{n-1} \alpha, \Z) \in \left(\frac{1}{q_n + q_{n-1}}, \frac{1}{q_n}\right)$. Now \eqref{weak1/xj} follows by applying Lemma \ref{j1lem}. Therefore, in this case, $x^N_{\min} = x^{q_n}_{\min} \geq \frac{\epsilon_n}{q_n}$ and using \eqref{epsnineq} we obtain
\begin{equation*}
S_N^1(f)(x)-S_N^{b_n + 1}(f)(x) \leq b_n \frac{q_n}{\epsilon_n} \leq \frac{N}{\epsilon_n} \leq \epsilon N\log(N).
\end{equation*}

On the other hand, if $N\geq q_n + \epsilon_n q_{n+1}$, then it is not necessarily true that $j_1^{q_n}(x)=j_1^N(x)$. Nevertheless, using \eqref{epsnineq}, we can estimate
\begin{align*}
S_N^1(f)(x)&-S_N^{b_n+1}(f)(x) =\sum_{i=2}^{b_n+1} f(R^{j_i^N(x)}(x)) \leq \sum_{i=1}^{b_n} \frac{q_{n+1}}{i} \\
&\leq q_{n+1} (1 + \log(b_n)) \leq \frac{1}{\epsilon_n} N (1+\log(b_n))\leq \epsilon N\log(N) +N,
\end{align*}
and we have shown \eqref{mncond} in both cases.
\end{proof}

\begin{lemma}\label{weak1/xnotlem}
Suppose that, for some $\gamma\in (0,1]$ and $n\in \N$, it holds that $q_{n+1}>q_n^{1+\gamma}$ and $q_n^{\gamma}>\gamma^{-1} 1000$. Then there are sets $A,B\subset [0,1)$ with $\lambda(A)=\lambda(B)=\frac{\gamma}{1000}$ such that, whenever $k\leq q_{n+1} q_n^{-\left(1+\frac{\gamma}{2}\right)}$,
\begin{equation*}
S_{N}^{k}(f)(x)>\frac{\gamma}{4} q_{n+1} \log(q_n), \; \text{ while } \; S_{N}^{k}(f)(y)<\frac{\gamma}{100} q_{n+1} \log(q_n) \;\;\;\forall x\in A, y\in B,
\end{equation*}
where $N=\fceil{\gamma q_{n+1}}{250}$. 
\end{lemma}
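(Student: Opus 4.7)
The plan is to exploit the cluster structure of the first $N$ iterates. Since $N<q_{n+1}$, the three-distance theorem arranges $\{R^j(y)\}_{j=0}^{N-1}$ into $q_n$ clusters centred at $\{y+i\alpha\}_{i=0}^{q_n-1}$ (separated by gaps $\ge \delta_n$), each of $b_n\approx \gamma q_{n+1}/(250q_n)$ points spaced by $\delta_{n+1}$. By Lemma~\ref{strong1/xlembn} the truncated sum $S_N(f_n)(y)$ is essentially constant in $y$, of order $N\log(q_n)=\gamma q_{n+1}\log(q_n)/250$, so the variability of $S_N(f)(y)$ comes from the few orbit points that enter the very-near-zero region $[0,1/(q_n+q_{n-1}))$, where $f_n$ vanishes but $f$ blows up. The sets $A$ and $B$ will be designed to force, respectively, a cluster sitting essentially on $0$ and every cluster well away from $0$.

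For $A$, I would set $A=\bigcup_{j=\lfloor N/4\rfloor}^{\lfloor 3N/4\rfloor-1} R^{-j}([0,\eta))$ with $\eta=1/(2q_{n+1})$. Since $\delta_{n+1}>1/(q_{n+1}+q_n)>\eta$, the preimages are pairwise disjoint, giving $\lambda(A)=\gamma/1000$ up to rounding. For $x\in A$ there is a unique $j_0$ in this index range with $y_0:=R^{j_0}(x)\in[0,\eta)$, and the restriction $j_0\in[\lfloor N/4\rfloor,\lfloor 3N/4\rfloor)$ guarantees at least $b_n/4$ multiples $j_0+jq_n$ of each sign in $[0,N-1]$. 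Consequently the cluster through $y_0$ places orbit points on both sides of $0$ at distances $i\delta_{n+1}\pm y_0$ for $i=1,\ldots,\lfloor b_n/4\rfloor$. Even after trimming the $k$ largest values the retained cluster contribution is of order $2q_{n+1}\log(b_n/k)$; combined with the constraint $k\le q_{n+1}q_n^{-(1+\gamma/2)}$ (which gives $b_n/k\ge \gamma q_n^{\gamma/2}/500$) and with $q_n^\gamma>1000/\gamma$ to absorb the additive constants inside the logarithm, this should yield $S_N^k(f)(x)>(\gamma/4)q_{n+1}\log(q_n)$.

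For $B$, I would take any subset of measure $\gamma/1000$ of the set of $y$ whose nearest cluster centre to $0$ lies at distance in $[\delta_n/3,\delta_n/2]$. The nearest-centre distance is piecewise linear in $y$ with slopes $\pm 1$, so a simple parametrisation argument shows that this set has measure of order a constant, far in excess of $\gamma/1000$. For $y\in B$ both nearest cluster centres (on each side of $0$) sit at distance at least $\delta_n/3$ from $0$, and since the cluster diameter is only $O(\gamma\delta_n/250)\ll \delta_n$, every close orbit point is at distance at least $\delta_n/4$ from $0$. The close-cluster contribution to $S_N(f)(y)$ is therefore at most $O(b_nq_n)=O(\gamma q_{n+1})$; combined with $S_N(f_n)(y)\approx \gamma q_{n+1}\log(q_n)/250$ and using that the hypothesis $q_n^\gamma>1000/\gamma$ makes $\log(q_n)$ large enough to dominate the remainder, one gets $S_N(f)(y)<(\gamma/100)q_{n+1}\log(q_n)$, which trivially controls $S_N^k(f)(y)$.

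The hard part will be the quantitative $A$-side estimate: near the upper limit $k=q_{n+1}q_n^{-(1+\gamma/2)}$, the lower bound on $\log(b_n/k)$ is only of the form $(\gamma/2)\log(q_n)-O(\log(1/\gamma))$, so absorbing the constants into the final $\gamma/4$-bound depends essentially on the hypothesis $q_n^\gamma>1000/\gamma$ being used tightly, together with the factor of two coming from the symmetric placement of the cluster on both sides of $0$.
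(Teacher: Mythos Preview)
Your strategy is essentially the same as the paper's: locate sets $A$ and $B$ by controlling where the nearest-to-zero cluster sits, then estimate the cluster's contribution for $A$ and use a Denjoy--Koksma-type bound for $B$. The paper parametrises $A$ and $B$ via the position of $R^{j_{q_n}^{q_n}(x)}(x)$ (the last of the first $q_n$ iterates before $0$), while you parametrise $A$ by forcing a specific iterate into $[0,\eta)$; these are interchangeable.

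There is one genuine error. Your ``factor of two coming from the symmetric placement of the cluster on both sides of $0$'' does not exist for $f(x)=1/x$: this function has its only singularity at $0^+$, so orbit points landing just to the \emph{left} of $0$ (i.e.\ near $1$ in $[0,1)$) contribute $f\approx 1$, not anything large. Only the right-side half of your cluster matters. Concretely, if $\alpha-\tfrac{p_n}{q_n}>0$ then from $y_0\in[0,\eta)$ the points $R^{j_0+jq_n}(x)=y_0+j\delta_{n+1}$ for $j\ge 0$ lie in $(0,\infty)$ and contribute; the points $R^{j_0-jq_n}(x)=y_0-j\delta_{n+1}$ for $j\ge 1$ lie in $(-\tfrac12,0)$ and contribute essentially nothing. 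Since you restrict $j_0\in[\lfloor N/4\rfloor,\lfloor 3N/4\rfloor)$, the number of \emph{right-side} points is only about $b_n/4$, so the retained cluster contribution after trimming is at best $q_{n+1}\log\!\big(\tfrac{b_n/4}{k}\big)$, not $2q_{n+1}\log(b_n/k)$.

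This does not kill the argument, but it does mean your constant-tracking in the last paragraph is off: with $b_n/(4k)\ge \gamma q_n^{\gamma/2}/1000$ you get a lower bound $q_{n+1}\big[(\gamma/2)\log q_n-\log(1000/\gamma)\big]$, and to clear $(\gamma/4)q_{n+1}\log q_n$ you would need $q_n^{\gamma/4}\gtrsim 1000/\gamma$, which is stronger than the stated hypothesis $q_n^{\gamma}>1000/\gamma$. The paper avoids this by designing $A$ so that the crossing time $s^*$ is at most $\lceil b_n/2\rceil$, leaving $\approx b_n/2$ right-side points rather than $b_n/4$; to match this you should drop the lower restriction on $j_0$ (take $j_0\in[0,\lfloor N/2\rfloor)$, say, in the case $\alpha-\tfrac{p_n}{q_n}>0$) so that the forward orbit $j_0+jq_n$ has room for $j$ up to $\approx b_n/2$. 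Your $B$-side argument via Lemma~\ref{strong1/xlembn} is fine; the paper does it slightly more directly by applying Denjoy--Koksma to the cut-off $f'=f\cdot 1_{[1/(8q_n),1)}$, but your route gives the same control once you bound $\sum_{j<n}a_jq_j\log b_j\le 4q_n\log q_n=o(\gamma q_{n+1}\log q_n)$ using $q_{n+1}>q_n\cdot q_n^{\gamma}>1000q_n/\gamma$.
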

\begin{proof}
We only treat the case for $\alpha-\frac{p_n}{q_n}>0$, the other case follows by considering $1-\alpha$ instead. Let
\begin{align*}
A&=\left\{x\in [0,1) \;\left|\; R^{j_{q_n}^{q_n}(x)}(x)\in \left(-\frac{\gamma}{1000 q_n},0\right)\right. \right\}
=\bigcup_{j=0}^{q_n-1} \left(-\frac{\gamma}{1000 q_n},0\right) - j\alpha,
\end{align*}
and
\begin{align*}
B&=\left\{x\in [0,1) \;\left|\; R^{j_{q_n}^{q_n}(x)}(x) \in \left(-\left(\frac{1}{4}+\frac{\gamma}{1000}\right) \frac{1}{q_n},-\frac{1}{4 q_n}\right)\right. \right\}\\
&=\bigcup_{j=0}^{q_n-1} \left(-\left(\frac{1}{4}+\frac{\gamma}{1000}\right) \frac{1}{q_n},-\frac{1}{4 q_n}\right) - j\alpha.
\end{align*}
Clearly $\lambda(A)=\lambda(B)=\frac{\gamma}{1000}$.

For $x\in B$ we have
\begin{equation*}
R^{j_{q_n}^{q_n}(x)}(x) \leq -\frac{1}{4 q_n} \leq -\fceil{\gamma q_{n+1}}{250 q_n} \frac{1}{q_{n+1}} \leq -b_n \delta_{n+1},
\end{equation*}
and Lemma \ref{j1lem} yields $j_1^{q_n}(x)=j_1^N(x)$. Moreover, since different points in $\{x,...,R^{q_n - 1}(x)\}$ are at distance at least $\frac{1}{2q_n}$, it holds that $R^{j_1^{q_n}(x)}(x) \geq \frac{1}{8q_n}$, therefore $S_{N}^{k}(f)(x)=S_{N}^{k}(f')(x)$, where
\begin{equation*}
f'(z)=\begin{cases}
0 & \text{if } z\in [0,\frac{1}{8q_n}),\\
f(z) & \text{otherwise}.
\end{cases}
\end{equation*}
Using the Denjoy-Koksma inequality, we obtain
\begin{equation*}
S_{N}^{k}(f)(x) \leq S_{(b_n + 1) q_n}(f')(x) \leq 2 (b_n+1) q_n \log(q_n) + 8 (b_n+1) q_n \leq \frac{\gamma}{100} q_{n+1} \log(q_n).
\end{equation*}

On the other hand, since $\alpha-\frac{p_n}{q_n}>0$, applying $R^{q_n}$ will move any point to the right by a distance of $\delta_{n+1} = | q_n \alpha| > \frac{1}{2q_{n+1}}$. Therefore, if $x\in A$ then it holds that 
\begin{equation*}
R^{j_{q_n}^{q_n}(x) + \fceil{b_n}{2} q_n}(x) \geq -\frac{\gamma}{1000 q_n} + \fceil{b_n}{2} \delta_{n+1} \geq -\frac{\gamma}{1000 q_n} + \frac{\gamma q_{n+1}}{500 q_n} \frac{1}{2q_{n+1}} \geq 0,
\end{equation*}
where we identify the circle with $\left[-\frac{1}{2}, \frac{1}{2}\right)$. Since now the points 
\begin{equation*}
R^{j_{q_n}^{q_n}(x)}(x),R^{j_{q_n}^{q_n}(x) +q_n}(x),...,R^{j_{q_n}^{q_n}(x) + \fceil{b_n}{2} q_n}(x)
\end{equation*}
are at distance $\delta_{n+1} < \frac{1}{q_{n+1}}$ and cross from negative to positive, it follows that there is an $s^*\in\left\{1,...,\fceil{b_n}{2} \right\}$ such that $R^{j_{q_n}^{q_n}(x)+s^* q_n}(x)\in\left(0, \frac{1}{q_{n+1}}\right)$. There could be two such points in the interval, in that case choose the left one\footnote{For the calculation below it is not important which we choose.} so that $j_{q_n}^{q_n}(x)+s^* q_n = j_1^N(x)$. The points
\begin{equation*}
\{R^{j_{q_n}^{q_n}(x)+(s^*+k) q_n}(x),..., R^{j_{q_n}^{q_n}(x)+(b_n - 1) q_n}(x)\} 
\end{equation*}
have $k$ other points to their left, therefore $S^k_N(f)(x)$ can be estimated from below by summing $f$ only over those points. More precisely
\begin{align*}
S^k_N(f)(x)&>\sum_{j=s^*+k}^{b_n-1} f(R^{j_{q_n}^{q_n}(x)+j q_n}(x)) >\sum_{j=k}^{\ffloor{b_n-1}{2}} f\left(\frac{j+1}{q_{n+1}}\right)\\
& = q_{n+1} \sum_{j=k}^{\ffloor{b_n-1}{2}} \frac{1}{j+1} \geq q_{n+1} \left(\log \left(\ffloor{b_n-1}{2}\right) - \log(k) \right)\\
& \geq \frac{1}{2} q_{n+1} \left( \log\left(\frac{q_{n+1}}{2q_n}\right)-\log\Big(q_{n+1} q_n^{-1-\frac{\gamma}{2}}\Big)\right) \geq \frac{\gamma}{4} q_{n+1} \log(q_n).
\end{align*}
\end{proof}

Now we are in a position to prove Theorem \ref{weak1/xthm}.

\begin{proof}[Proof of Theorem \ref{weak1/xthm}]
First assume that $\lim_{n\to\infty} \frac{\log(q_{n+1})}{\log(q_n)} = 1$, then, by Lemma~\ref{weak1/x}
\begin{equation}\label{weak1/xsm1con}
\lambda\left(\left|\frac{S_N^1(f)(x)}{N\log(N)}-1\right|>\epsilon\right) \to 0 \;\;\;\as{n} \quad \forall \epsilon>0.
\end{equation}
For $N\geq 1$ let $M_N=S_N(f)-S_N^1(f)=\max_{i=0,...,N-1} f\circ R^i$, then, for $\epsilon>0$, we have
\begin{align*}
\lambda&\left(\left|\frac{S_N(f)(x)}{N\log(N)}-1\right|>\epsilon\right) \leq\lambda\left(M_N(x) > \frac{\epsilon}{2} N \log(N)\right) + \lambda\left(\left|\frac{S_N^1(f)(x)}{N\log(N)}-1\right|>\frac{\epsilon}{2}\right)\\
&\leq \lambda\left(x\;\left|\; \exists i\in \{0,\ldots, N-1\}\text{ s.t.\ } R^i(x)\in \left( 0,  \left(\frac{\epsilon}{2} N \log(N)\right)^{-1}\right) \right)\right. \\
&\qquad+ \lambda\left(\left|\frac{S_N^1(f)(x)}{N\log(N)}-1\right|>\frac{\epsilon}{2}\right)\\
&\leq \frac{2}{\epsilon \log(N)} + \lambda\left(\left|\frac{S_N^1(f)(x)}{N\log(N)}-1\right|>\frac{\epsilon}{2}\right).
\end{align*}
In light of \eqref{weak1/xsm1con}, it follows that
\begin{equation*}
\lambda\left(\left|\frac{S_N(f)(x)}{N\log(N)}-1\right|>\epsilon\right) \to 0 \;\;\;\as{n} \quad \forall \epsilon>0.
\end{equation*}

On the other hand, now suppose $\limsup_{n\to\infty} \frac{\log(q_{n+1})}{\log(q_n)} > 1$. Taking a subsequence $\s{n}{l}$ we may assume that there is a $\gamma\in (0,1)$ such that $q_{n_l+1}>q_{n_l}^{1+\gamma}$ and $q_{n_l}^{\gamma}>\gamma^{-1} 1000$. For a contradiction assume that there are $d_N>0$ such that
\begin{equation*}
\lambda\left(\left|\frac{S_N(f)(x)}{d_N}-1\right|>\epsilon\right) \to 0 \;\;\;\as{N}\quad \forall \epsilon>0.
\end{equation*}
For every $l\geq 1$ let $N_l=\fceil{\gamma q_{n_l+1}}{250}$, by Lemma \ref{weak1/xnotlem} there are sets $A_l,B_l$ with $\lambda(A_l)=\lambda(B_l)=\frac{\gamma}{1000}$ such that
\begin{equation}\label{weak1/xthmeq1}
S_{N_l}(f)(x)>\frac{\gamma}{4} q_{n_l+1} \log(q_{n_l})\;\;\;\forall x\in A_l,
\end{equation}
while
\begin{equation}\label{weak1/xthmeq2}
S_{N_l}(f)(y)<\frac{\gamma}{100} q_{n_l+1} \log(q_{n_l}) \;\;\;\forall y\in B_l.
\end{equation}
Due to \eqref{weak1/xthmeq1}, we must have $d_{N_l} \geq \frac{\gamma}{8} q_{n_l+1} \log(q_{n_l})$, on the other hand \eqref{weak1/xthmeq2} implies $d_{N_l} \leq \frac{\gamma}{50} q_{n_l+1} \log(q_{n_l})$. This is a contradiction.
\end{proof}

\begin{proof}[Proof of Remark \ref{seq1/xrem}]
Let $l(N)=o(N)$ be a sequence of natural numbers and $k(N)\leq l(N)$. For $\epsilon>0$ denote 
\begin{equation*}
u(\epsilon)=\min(N\geq 1\;|\; l(m)<\epsilon m, \;\forall m\geq N).
\end{equation*}
Let $\alpha$ be such that,
\begin{equation*}
q_{n+1}>q_n^2 u \left( \frac{1}{q_n^2} \right) \;\;\; \text{ for all $n$ sufficiently large},
\end{equation*}
noting that there is a $G_{\delta}$ dense set of $\alpha$ satisfying this condition. Moreover, this choice implies that
\begin{equation*}
k(N) \leq l(N) \leq q_{n+1} q_n^{-\frac{3}{2}},
\end{equation*}
whenever $N=\fceil{q_{n+1}}{250}$ and $\sqrt{q_n}> 250$. The claim follows from Lemma \ref{weak1/xnotlem} with $\gamma=1$.
\end{proof}

We will now focus on the claims about strong convergence, i.e.\ Theorems \ref{strong1/xthm}, \ref{nonconat0thm1/x}, and \ref{noncothm1/x}.

\begin{proof}[Proof of Theorem \ref{strong1/xthm}]
Let $\kappa=\frac{1}{8}$ and $\alpha$ be such that $q_{n+1} < q_n \log(q_n) \log^2(\log(q_n))$ and 
\begin{equation}\label{strong1/xthmfkalphacond}
\sum_{n\geq 1, q_{n+1} \geq q_n \log^{1-\frac{\kappa}{2}}(q_n)} \frac{1}{\log^{1-\kappa}(q_n)} <\infty,
\end{equation}
by\footnote{In \cite{fkmultimix} it is shown that the stronger condition 
\begin{equation*}
\sum_{n\geq 1, q_{n+1} \geq q_n \log^{1-\kappa}(q_n)} \frac{1}{\log^{1-\kappa}(q_n)} <\infty,
\end{equation*}
holds for almost all $\alpha$.}
\cite{fkmultimix}, this condition is satisfied for almost all $\alpha$. Denote 
\begin{equation*}
\mathcal{N}=\{n\geq 1 \;|\; q_{n+1} < q_n \log^{1-\frac{\kappa}{2}}(q_n)\}.
\end{equation*}

Clearly, $\alpha$ fulfils the Roth type condition and thus, Proposition \ref{strong1/xprop} shows that 
\begin{equation*}
\lim_{N\rightarrow \infty} \frac{S_N^{b_n +1}(f)(x)}{N \log (N)} = 1 \;\;\; \text{ uniformly in } x\in [0,1).
\end{equation*}
The claim \eqref{strong1/xthmcon} follows once we show that, for almost every $x$, it holds that
\begin{equation}\label{strong1/xwant}
S_N^1(f)(x) - S_N^{b_n + 1}(f)(x) = o(N \log(N)).
\end{equation}
We estimate the left side in two different ways;
\begin{itemize}
\item since any two distinct points in $\{x,...,R^{N-1}(x)\}$ are at distance at least $\frac{1}{2q_{n+1}}$, we have 
\begin{equation}\label{strong1/xproofineq1}
S_N^1(f)(x) - S_N^{b_n + 1}(f)(x) \leq \sum_{j=1}^{b_n} \frac{2 q_{n+1}}{j} \leq 10 q_{n+1} \max(1, \log(b_n)), 
\end{equation}
\item on the other hand, denoting $x^N_{\min}=R^{j_1^N(x)}(x)$, we can trivially estimate
\begin{equation}\label{strong1/xproofineq2}
S_N^1(f)(x) - S_N^{b_n + 1}(f)(x) \leq b_n \frac{1}{x^N_{\min}}.
\end{equation}
\end{itemize}

Since $b_n < \log(q_n) \log^2(\log(q_n))$, we have;
\begin{enumerate}
\item[(A)] if $n\in \mathcal{N}$ sufficiently large, then by \eqref{strong1/xproofineq1} it holds that
\begin{align*}
S_N^1&(f)(x) - S_N^{b_n + 1}(f)(x) \leq 10 q_{n+1} \max(1,\log(b_n)) 
\leq 20 q_n \log^{1-\frac{\kappa}{2}}(q_n) \log_2(q_n)=o(N \log(N)),
\end{align*}
\item[(B)] if $N > q_n \log^{\frac{\kappa}{2}}(q_n)$, then by \eqref{strong1/xproofineq1} for $n$ sufficiently large it holds that
\begin{align*}
S_N^1&(f)(x) - S_N^{b_n + 1}(f)(x) \leq 10 q_{n+1} \max(1,\log(b_n)) \\
&\leq 20 q_n \log(q_n) \log^3(\log(q_n)) \leq 100 N \log^{1-\frac{\kappa}{4}}(N) = o(N\log(N)),
\end{align*}
\item[(C)] if $x^N_{\min} \geq \frac{1}{q_n \log^{1-\kappa}(q_n)}$, then by \eqref{strong1/xproofineq2} for $n$ sufficiently large it holds that
\begin{equation*}
S_N^1(f)(x) - S_N^{b_n + 1}(f)(x) \leq q_n \log^{1-\kappa}(q_n) \log^2(\log(q_n)) = o(N\log(N)).
\end{equation*}
\end{enumerate}

In the next steps we will show that, indeed, for almost all $x$ and all but finitely many $n\not\in\mathcal{N}$, if $N\geq q_n\log^{\frac{\kappa}{2}}(q_n)$ we have $x_{\min}^N\geq \frac{1}{q_n\log^{1-\kappa}(q_n)}$ and we have covered all cases.

For each $n\not \in \mathcal{N}$ with $\alpha-\frac{p_n}{q_n}>0$ let $\epsilon_n= \log^{-(1-\kappa)}(q_n)$ and 
\begin{align*} 
B_n & =\left\{x\in [0,1) \;\left|\; d(x+j\alpha,\Z)\leq \frac{\epsilon_n}{q_n} \; \text{ for some } j\in \{0,...,q_n-1\} \right. \right\}
= \bigcup_{j=0}^{q_n-1} \left[\frac{-\epsilon_n}{q_n},\frac{\epsilon_n}{q_n}\right] - j \alpha.
\end{align*}
For $n\not \in \mathcal{N}$ with $\alpha-\frac{p_n}{q_n}>0$, $x\not \in B_n$, and $N \leq q_n \log^{\frac{\kappa}{2}}(q_n)$ we have
\begin{equation*}
R^{j_{q_n}^{q_n}(x)}(x) <  -q_n^{-1} \log^{-(1-\kappa)}(q_n) \leq - b_n q_{n+1}^{-1},
\end{equation*}
and by Lemma \ref{j1lem} it follows that $j_1^{q_n}(x)=j_1^N(x)$. In this case
\begin{equation*}
x^N_{\min} \geq q_n^{-1} \log^{-(1-\kappa)}(q_n).
\end{equation*}

Similarly, for $n\notin\mathcal{N}$, $\alpha-\frac{p_n}{q_n}<0$ and
\begin{equation*}
B_n'=\bigcup_{j=0}^{q_n-1} \left[0,\frac{2\epsilon_n}{q_n}\right] - j\alpha,
\end{equation*}
it also holds that 
\begin{equation*}
x^N_{\min} \geq q_n^{-1} \log^{-(1-\kappa)}(q_n) \quad \forall x\not\in B_n'.
\end{equation*}
Indeed, if $x\notin B_n'$, $\alpha-\frac{p_n}{q_n}<0$, $n\notin \mathcal{N}$ and $N\leq q_n \log^{\frac{\kappa}{2}}(q_n)$, then
\begin{equation*}
j_1^N(x)= \begin{cases}
j_1^{q_n}(x) + b_n q_n & \text{if } j_1^{q_n}(x) + b_n q_n \leq N,\\
j_1^{q_n}(x) + (b_n -1) q_n & \text{otherwise},
\end{cases}
\end{equation*}
therefore, under the just stated conditions, it holds that
\begin{equation*}
x^N_{\min} \geq 2 q_n^{-1} \log^{-(1-\kappa)}(q_n) - b_n q_{n+1}^{-1} \geq q_n^{-1} \log^{-(1-\kappa)}(q_n).
\end{equation*}

Since $\lambda( B_n\cup B_n')= 3\log^{-(1-\kappa)}(q_n)$, the assumption \eqref{strong1/xthmfkalphacond} shows that $\sum_{n\not \in \mathcal{N}} \lambda(B_n \cup B_n') <\infty$, and by Borel-Cantelli almost every $x$ is only in finitely many $B_n$ or $B_n'$ with $n\not\in \mathcal{N}$. By (A)-(C) it follows that \eqref{strong1/xwant} holds for almost every $x$.

For the "furthermore" part of the statement, note that if $\alpha$ is of bounded type then $\mathcal{N}=\N \setminus K$ for a finite set $K\subset \N$. Hence, case (C), the only case where we don't have a uniform statement, does not have to be considered.
\end{proof}

Note that, if $\alpha$ is not of bounded type, the proof uses a Borel-Cantelli argument, hence convergence typically is not uniform. For example we shall show that, for almost all $\alpha$,
\begin{equation*}
\frac{S_N^1(f)(0)}{N \log(N)}
\end{equation*}
does not converge.

\begin{lemma}\label{pointoscbiglem}
If $q_{n+1} \in (q_n \log(q_n) \log_3(q_n)\, ,\, q_n\log^2(q_n))$, $q_n>10^6$, $\alpha-\frac{p_n}{q_n}>0$ and $x^{q_n}_{\min} < \frac{1}{q_n \log (q_n) \log_3(q_n)}$, then it holds that
\begin{equation*}
S_N^1(f)(x)-S_N^{b_n+1}(f)(x) \geq \frac{1}{2} N\log(N),
\end{equation*}
where $N= \lceil \frac{q_{n + 1} \log_5(q_n)}{\log(q_{n})} \rceil $.
\end{lemma}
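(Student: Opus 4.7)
The strategy is to locate a cluster of $b_n$ orbit points crowding near $0$ beyond $x^{q_n}_{\min}$, convert this into a pointwise upper bound on the order statistics by pigeonhole, and finally estimate the resulting harmonic-type sum. The essential structural fact, which powers the Lemma, is that the two quantities $T := \epsilon/\delta_{n+1}$ and $b_n$ are both driven by $q_{n+1}$, in opposing directions, so the ratio $b_n/T$ is uniformly bounded below by something of iterated-log order.

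\textbf{Step 1 (locate the cluster).} Since $\alpha - p_n/q_n > 0$, the map $R^{q_n}$ translates every point of $\T$ to the right by $\delta_{n+1}$. Writing $\epsilon := x^{q_n}_{\min} = R^{j_1^{q_n}(x)}(x)$, the iterates
\[
R^{j_1^{q_n}(x) + sq_n}(x) = \epsilon + s\delta_{n+1}, \qquad s = 0, 1, \ldots, b_n - 1,
\]
all belong to $\{x, Rx, \ldots, R^{N-1}(x)\}$ (use $j_1^{q_n}(x) \leq q_n - 1$ together with $N \geq b_n q_n$), are pairwise distinct, and are contained in $[\epsilon,\, \epsilon + (b_n - 1)\delta_{n+1}] \subset [0, 1/q_n]$.

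\textbf{Step 2 (pigeonhole).} Let $p_i := R^{j_i^N(x)}(x)$ denote the $i$-th smallest positive orbit point. Step 1 provides $b_n$ orbit points in $[\epsilon, \epsilon+(b_n-1)\delta_{n+1}]$, so $p_i \leq \epsilon + (i-1)\delta_{n+1}$ for $i = 1, \ldots, b_n$, and therefore
\[
S_N^1(f)(x) - S_N^{b_n+1}(f)(x) = \sum_{i=2}^{b_n+1} \frac{1}{p_i} \geq \sum_{s=1}^{b_n-1} \frac{1}{\epsilon + s\delta_{n+1}}.
\]

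\textbf{Step 3 (evaluate the sum).} Using $1/\delta_{n+1} > q_{n+1}$ and integral comparison,
\[
\sum_{s=1}^{b_n-1} \frac{1}{\epsilon + s\delta_{n+1}} = \frac{1}{\delta_{n+1}} \sum_{s=1}^{b_n-1}\frac{1}{T + s} \geq q_{n+1} \log\!\left(\frac{T + b_n}{T + 1}\right).
\]
The hypothesis $\epsilon < 1/(q_n\log(q_n)\log_3(q_n))$ combined with $q_{n+1} \leq q_n \log^2(q_n)$ gives $T \leq 2\log(q_n)/\log_3(q_n)$; and $q_{n+1} \geq q_n\log(q_n)\log_3(q_n)$ together with the definition of $N$ forces $b_n \geq \log_3(q_n)\log_5(q_n) - 1$. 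Dividing these bounds yields $b_n/T \geq \log_3(q_n)\log_5(q_n)/3$, so in particular $b_n \geq T$; the elementary inequality $(T+b_n)/(T+1) \geq b_n/(2\max(T,1))$ then produces $(T+b_n)/(T+1) \geq \log_3(q_n)\log_5(q_n)/6$. Taking logarithms gives a lower bound of $q_{n+1}(\log_4(q_n) - O(1))$ for the sum. Since $N\log(N) \leq 2q_{n+1}\log_5(q_n)$ for $q_n$ large (using $\log(N) \leq \log(q_{n+1}) = \log(q_n)(1+o(1))$), and since $\log_4(q_n)/\log_5(q_n) \to \infty$, the inequality $\tfrac{1}{2} N \log(N) \leq q_{n+1}\log_5(q_n) \leq q_{n+1}(\log_4(q_n) - O(1))$ holds for $q_n$ sufficiently large.

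\textbf{Main obstacle.} Step 3 is the crux. The naive bound $\log((T+b_n)/(T+1)) \geq \log(b_n) - \log(T+1)$ is useless on its own, since $\log(T+1)$ can be as large as $\log_2(q_n)$ while $\log(b_n)$ can be as small as $\log_4(q_n)$. What rescues the argument is the coupling: both $T$ and $b_n$ are proportional to $q_{n+1}$ (in opposing ways), forcing $b_n/T$ to grow like $\log_3(q_n)\log_5(q_n)$, which is just enough to produce a logarithm of order $\log_4(q_n)$, comfortably above the target $\log_5(q_n)$.
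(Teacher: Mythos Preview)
Your approach matches the paper's: locate the cluster of $b_n$ orbit points near $0$, bound the order statistics by pigeonhole, and estimate the resulting harmonic sum. However, Step~3 contains a logical gap. You derive $T \leq 2\log(q_n)/\log_3(q_n)$ using the \emph{upper} bound $q_{n+1} \leq q_n\log^2(q_n)$, and $b_n \geq \log_3(q_n)\log_5(q_n) - 1$ using the \emph{lower} bound $q_{n+1} \geq q_n\log(q_n)\log_3(q_n)$. Literally dividing these gives only
\[
\frac{b_n}{T} \;\geq\; \frac{(\log_3(q_n)\log_5(q_n)-1)\log_3(q_n)}{2\log(q_n)},
\]
which tends to $0$, not to $\log_3(q_n)\log_5(q_n)/3$. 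The two bounds use opposite extremes of $q_{n+1}$ and cannot be sharp simultaneously, so dividing them discards exactly the coupling you correctly identify in your ``Main obstacle'' paragraph.

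The fix is to compute the ratio directly, retaining the $q_{n+1}$-dependence until it cancels. From $T \leq 2q_{n+1}/(q_n\log(q_n)\log_3(q_n))$ and $b_n \geq q_{n+1}\log_5(q_n)/(q_n\log(q_n)) - 1$ one gets
\[
\frac{b_n}{T} \;\geq\; \frac{q_n\log(q_n)\log_3(q_n)}{2q_{n+1}}\left(\frac{q_{n+1}\log_5(q_n)}{q_n\log(q_n)} - 1\right)
\;=\; \frac{\log_3(q_n)\log_5(q_n)}{2} - \frac{q_n\log(q_n)\log_3(q_n)}{2q_{n+1}}
\;\geq\; \frac{\log_3(q_n)\log_5(q_n)}{3},
\]
the last step using $q_{n+1} > q_n\log(q_n)\log_3(q_n)$. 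This is precisely what the paper does (with $\epsilon q_{n+1}$ playing the role of your $T$). Once this step is repaired, the remainder of your argument goes through and coincides with the paper's proof.
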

\begin{proof}
Denote $\epsilon=\frac{1}{q_n \log(q_n) \log_3(q_n)}$, for $n$ sufficiently large, it holds that
\begin{align*}
S_N^{1}(f)(x)-S_N^{b_n + 1}(f)(x) &\geq \sum_{i=1}^{b_n} \frac{1}{\epsilon + \frac{i}{q_{n+1}}}\geq \sum_{i=1+\lceil\epsilon q_{n+1}\rceil}^{b_n + \lceil\epsilon q_{n+1}\rceil} \frac{q_{n+1}}{i}\\
&\geq \frac{1}{2} q_{n+1} \log \left( \frac{b_n}{\epsilon q_{n+1}} \right) \geq \frac{1}{2} N\frac{\log(q_n)}{\log_5(q_n)} \log_4(q_n)
\geq \frac{1}{2} N \log (N).
\end{align*}
\end{proof}

\begin{remark}
The "$\log_3$" in the previous lemma might seem a bit odd, and in fact, the lemma works also if we replace $\log_3$ by any function $\psi$ with $\psi(n)\nearrow \infty$. The reason for our choice will be apparent from the proofs of Theorems \ref{nonconat0thm1/x} and \ref{noncothm1/x}.
\end{remark}

\begin{lemma}\label{evenlem}
For almost every $\alpha$ there are infinitely many $n\geq 1$ such that 
$$q_{n+1}> q_n \log(q_n) \log_3(q_n)\quad\text{ and }\alpha-\frac{p_n}{q_n}>0.$$
\end{lemma}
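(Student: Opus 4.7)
The plan is to combine L\'evy's theorem on the exponential growth of $q_n$ with the Gauss--Kuzmin statistics of the continued fraction coefficients, and then extract infinitely many good indices via a Borel--Cantelli argument exploiting the mixing of the Gauss map along the relevant parity subsequence.

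First, with the paper's convention $p_1 = 0$, $q_1 = 1$, the convergents alternately under- and over-approximate $\alpha$, so $\alpha - \frac{p_n}{q_n} > 0$ holds precisely for $n$ of one fixed parity; without loss of generality I focus on this parity class, call it $\mathcal{P} \subseteq \N$. Next, by L\'evy's theorem, for Lebesgue-a.e.\ $\alpha$ one has $\frac{\log q_n}{n} \to C := \frac{\pi^2}{12 \log 2}$, and in particular $\log q_n \leq 2 C n$, hence $\log_3 q_n \leq 2 \log_2 n$, for all $n$ sufficiently large. Combined with the bound $q_{n+1} \geq a_n q_n$ coming from the recursion, it therefore suffices to produce, for a.e.\ $\alpha$, infinitely many $n \in \mathcal{P}$ with
\begin{equation*}
a_n > 4 C n \log_2 n.
\end{equation*}

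This last task is a metric question about continued fraction coefficients. Under the Gauss measure $\mu_G$ (which is equivalent to Lebesgue on $[0,1)$) invariant for the Gauss map $G(x) = \{1/x\}$, the sequence $(a_n)$ is stationary with $a_n(\alpha) = a_1(G^{n-1}\alpha)$, and by the Gauss--Kuzmin formula $\mu_G(a_1 > M) = \frac{1}{\log 2} \log(1 + \frac{1}{M+1}) \asymp \frac{1}{M}$. Setting $A_n := \{a_n > 4 C n \log_2 n\}$, one has
\begin{equation*}
\sum_{n \in \mathcal{P}} \mu_G(A_n) \asymp \sum_{n \in \mathcal{P}} \frac{1}{n \log_2 n} = +\infty,
\end{equation*}
where divergence follows from the integral test applied to $\int dx/(x \log\log x)$. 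The events $A_n$ are cylinder sets in the single coordinate $a_n$, and $G$ (hence $G^2$) is exponentially $\psi$-mixing. The standard divergence Borel--Cantelli lemma for $\psi$-mixing stationary sequences, applied along the arithmetic progression $\mathcal{P}$, then gives $\mu_G(\limsup_{n \in \mathcal{P}} A_n) = 1$, and equivalence of $\mu_G$ with Lebesgue measure transfers the conclusion.

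The main technical point is the divergence Borel--Cantelli step along the parity subsequence. This can be handled by a direct second-moment computation using the exponential decay of correlations for cylinder events under $G^2$, or by invoking classical Khinchin--Philipp-type metric results stating that $a_n \geq \phi(n)$ infinitely often for a.e.\ $\alpha$ whenever $\sum 1/\phi(n) = \infty$. Restricting to a single parity introduces no additional difficulty, since for a stationary mixing process the Borel--Cantelli series along an arithmetic progression differs from the full sum only by a bounded density factor, preserving divergence.
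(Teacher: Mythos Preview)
Your approach is correct and genuinely different from the paper's. The paper does not linearise the threshold via L\'evy's theorem; instead it keeps the condition in the form $a_n > \log(q_n)\log_3(q_n)$, invokes Khinchine's theorem to get infinitely many such $n$ with no parity restriction, and then runs a short ergodicity argument: writing $A_{\mathrm{even}}$, $A_{\mathrm{odd}}$ for the sets of $\alpha$ with infinitely many good $n$ of the respective parity, the shift identity $a_n(T\alpha)=a_{n+1}(\alpha)$ together with $q_n(T\alpha)\leq q_{n+1}(\alpha)$ gives $A_{\mathrm{odd}}\subset T^{-1}(A_{\mathrm{even}})$ and $A_{\mathrm{even}}\subset T^{-1}(A_{\mathrm{odd}})$, so $A_{\mathrm{even}}$ is $T^2$-invariant mod null sets; ergodicity of $T^2$ forces $\mu(A_{\mathrm{even}})=\mu(A_{\mathrm{odd}})\in\{0,1\}$, and since their union has full measure both equal $1$. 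Your route trades this soft symmetry argument for more quantitative input (L\'evy, Gauss--Kuzmin tail asymptotics, $\psi$-mixing), which is heavier but perfectly valid. One point worth making explicit: the Kochen--Stone/Lamperti lemma you implicitly use (this is Lemma~\ref{reclem} in the paper) only yields $\mu_G(\limsup_{n\in\mathcal{P}} A_n)>0$; to upgrade to full measure you should either observe that the Erd\H{o}s--R\'enyi ratio tends to $1$ under $\psi$-mixing with $\psi(k)\to 0$, or invoke exactness of the Gauss map so that the tail event has measure $0$ or $1$.
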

\begin{proof}
To avoid confusion, in this proof we shall write $q_n(\alpha)$ resp.\ $a_n(\alpha)$ instead of $q_n$ resp.\ $a_n$.

We have $\sum_{n\geq 1} \frac{1}{n \log(n) \log_3(n)} = \infty$ and hence by Khinchine's Theorem, for almost every $\alpha$, there are infinitely many $n$ with 
\begin{equation*}
q_{n+1}(\alpha)> q_n(\alpha) \log(q_n(\alpha)) \log_3(q_n(\alpha)).
\end{equation*}
Equivalently,\footnote{This is not quite equivalent since $q_{n+1}=a_n q_n + q_{n-1}$, but almost. Using Khinchine also yields $q_{n+1}(\alpha)> 2q_n(\alpha) \log(q_n(\alpha)) \log_3(q_n(\alpha))$ for almost all $\alpha$, which implies $a_n(\alpha) >  \log(q_n(\alpha))\log_3(q_n(\alpha))$.}
\begin{equation*}
a_n(\alpha) >  \log(q_n(\alpha))\log_3(q_n(\alpha)).
\end{equation*}

Expanding $\alpha$ as
\begin{equation}\label{cfedef2}
\alpha=\cfrac{1}{a_1(\alpha) + \cfrac{1}{a_2(\alpha) + \cfrac{1}{...}}},
\end{equation}
we can easily see that $\alpha-\frac{p_n(\alpha)}{q_n(\alpha)}>0$ if and only if $n$ is even. Denote
\begin{align*}
&A_{even}=\left\{\alpha\in (0,1)\setminus \Q \;\left|
\begin{aligned}
& \text{there are infinitely many even } n \text{ with }\\
&  a_n(\alpha) >  \log(q_n(\alpha))\log_3(q_n(\alpha))
\end{aligned}
\right\}\right. ,\\
&A_{odd}=\left\{\alpha\in (0,1)\setminus \Q \;\left|
\begin{aligned}
& \text{there are infinitely many odd } n \text{ with }\\
&  a_n(\alpha) >  \log(q_n(\alpha))\log_3(q_n(\alpha))
\end{aligned}
\right\}\right. .
\end{align*}

Let $T(x)=\ffloor{1}{x}$ be the Gauss-map. From \eqref{cfedef2} it follows that
\begin{equation*}
a_n(T(\alpha))=a_{n+1}(\alpha) \; \text{ and therefore } \; q_n(T(\alpha)) \leq q_{n+1}(\alpha),
\end{equation*} 
hence $A_{odd} \subset T^{-1} (A_{even})$ and $A_{even} \subset T^{-1} (A_{odd})$. It follows that $A_{even} \subset T^{-2}(A_{even})$. Since $T$ preserves the Gauss-measure $\mu$, which is equivalent to Lebesgue, we have $A_{even} = T^{-2}(A_{even})$ (mod $\mu$). And since $T^2$ is ergodic\footnote{A fortiori $T$ is mixing.} $\mu(A_{even})\in \{0,1\}$. From
\begin{equation*}
A_{even} \subset T^{-1} (A_{odd}) \subset T^{-2} (A_{even}) = A_{even} \text{ (mod $\mu$)},
\end{equation*}
it follows that $A_{even}=T^{-1} (A_{odd})$ (mod $\mu$), hence $\mu(A_{odd})=\mu(A_{even})\in \{0,1\}$, and by equivalence to Lebesgue also $\lambda(A_{odd})=\lambda(A_{even})\in \{0,1\}$. At the same time, by Khinchine it holds that $\lambda(A_{even} \cup A_{odd})=1$, it follows that $\lambda(A_{odd})=\lambda(A_{even})=1$ as desired.
\end{proof}

\begin{proof}[Proof of Theorem \ref{nonconat0thm1/x}] 
Proposition \ref{qn1/xprop} ascertains that $|S_{q_n}^1(f)(x)-q_n \log (q_n)|\leq 7q_n$ and the "$\liminf$" part of the claim holds. 

By Lemma \ref{evenlem}, for almost all $\alpha$, there are infinitely many $n$ with $q_{n+1}> q_n \log(q_n) \log_3(q_n)$ and $\alpha-\frac{p_n}{q_n}>0$. For all such $n$ and $x=\alpha$ we have 
\begin{equation*}
x^{q_n}_{\min}=R^{q_n}(0) < \frac{1}{q_{n+1}} < \frac{1}{q_n \log(q_n) \log_3(q_n)},
\end{equation*}
and by Lemma \ref{pointoscbiglem} there is an $N_n\in [q_n, q_{n+1}-1]$ such that 
\begin{equation}\label{noneq}
S_{N_n}^1(f)(x)-S_{N_n}^{b_n+1}(f)(x) \geq \frac{1}{2} N_n\log(N_n).
\end{equation}
The claim follows from \eqref{noneq} together with Proposition \ref{strong1/xprop}.
\end{proof}

Theorem \ref{noncothm1/x} can be proven using the same ideas. Here we will need a version of the second Borel-Cantelli Lemma, using only weak pairwise independence. The following has been proven in \cite{LAMPERTIBC}, see also \cite{PETROVBC} or \cite{CHANDRABC}.

\begin{lemma}[\cite{LAMPERTIBC}]\label{reclem}
Let $(\Omega, \P)$ be a probability space and $\s{A}{n}$ be a sequence of events with
\begin{equation*}
\sum_{n=1}^{\infty} \P(A_n) = \infty
\end{equation*}
and there is a $K>0$ such that
\begin{equation*}
\P(A_i \cap A_j) \leq K \P(A_i) \P(A_j) \;\;\;\forall i<j.
\end{equation*}
Then it holds that
\begin{equation*}
\P(\omega\in \Omega \;|\; \omega \in A_n \text{ for infinitely many } n) > 0.
\end{equation*}
\end{lemma}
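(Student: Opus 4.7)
The plan is to apply the second-moment / Paley--Zygmund method to the counting sums $S_N := \sum_{n=1}^N 1_{A_n}$. Set $m_N := \E[S_N] = \sum_{n=1}^N \P(A_n)$; by hypothesis $m_N \to \infty$. A direct expansion together with the pairwise bound yields
\begin{equation*}
\E[S_N^2] = m_N + 2 \sum_{1\leq i < j\leq N} \P(A_i\cap A_j) \leq m_N + K m_N^2.
\end{equation*}

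For any $\lambda\in(0,1)$, the Paley--Zygmund inequality (a consequence of Cauchy--Schwarz applied to the estimate $\E[S_N \cdot 1_{\{S_N\geq \lambda m_N\}}] \geq (1-\lambda) m_N$) gives
\begin{equation*}
\P(S_N \geq \lambda m_N) \geq (1-\lambda)^2\frac{m_N^2}{\E[S_N^2]} \geq \frac{(1-\lambda)^2 m_N^2}{m_N + K m_N^2},
\end{equation*}
and the right-hand side tends to $(1-\lambda)^2/K$ as $N\to\infty$.

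Finally, I would pass to the tail event. Fix any $M>0$; since $m_N\to\infty$, for all $N$ sufficiently large we have $\lambda m_N>M$, so $\P(S_N\geq M)\geq \P(S_N\geq \lambda m_N)$. Monotonicity of the events $\{S_N\geq M\}$ in $N$ then gives
\begin{equation*}
\P\Big(\bigcup_{N\geq 1}\{S_N\geq M\}\Big) = \lim_{N\to\infty} \P(S_N\geq M) \geq \frac{(1-\lambda)^2}{K}.
\end{equation*}
Since $\{\omega\in A_n \text{ for infinitely many } n\} = \{S_\infty = \infty\}$ coincides with the decreasing intersection $\bigcap_M \bigcup_N \{S_N\geq M\}$, continuity of measure from above yields that this tail event has probability at least $(1-\lambda)^2/K$. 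Sending $\lambda\to 0$ gives the quantitative lower bound $1/K$. The only mild subtlety is this last passage from the pointwise-in-$N$ second-moment control to the tail event, but it is made automatic by the monotonicity of $\{S_N\geq M\}$ in $N$; the core of the argument is really just the second-moment bound on $\E[S_N^2]$ that weakens the usual independence hypothesis in the classical Borel--Cantelli lemma.
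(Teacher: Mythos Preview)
Your argument is correct and is essentially the standard second-moment (Paley--Zygmund / Kochen--Stone) proof of this weak-independence Borel--Cantelli lemma. Note, however, that the paper does not actually supply its own proof of Lemma~\ref{reclem}: it is stated with attribution to \cite{LAMPERTIBC} (with further references \cite{PETROVBC}, \cite{CHANDRABC}) and then used as a black box in the proof of Theorem~\ref{noncothm1/x}. So there is nothing in the paper to compare against beyond the citation; your write-up is a faithful and self-contained rendering of the classical argument those references contain, and even recovers the quantitative lower bound $\P(\limsup A_n)\geq 1/K$.
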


\begin{proof}[Proof of Theorem \ref{noncothm1/x}]
Let $\alpha$ be such that $q_{n+1}\in (q_n \log(q_n) \log_3(q_n), q_n\log^2(q_n))$ 
for all $n$. The claim about the weak law follows from Theorem \ref{weak1/xthm}. We will now show that the strong law does not hold. For simplicity, we only consider $n$ for which $\alpha-\frac{p_n}{q_n}>0$, the other case follows by considering $1-\alpha$ instead.

(i) For the "$\liminf$" part of the claim, note that by Proposition \ref{qn1/xprop} we always have $|S_{q_n}^1(f)(x)-q_n \log (q_n)|\leq 7q_n$. Furthermore, since the points $\{x,...,R^{q_n -1}(x)\}$ are at distance at least $\delta_n > \frac{1}{2q_n}$, it holds that $R^{j_{i+1}^{q_n}(x)}(x) \geq \frac{i}{2q_n}$. It follows that
\begin{equation*}
|S_{q_n}^K(f)(x)-q_n \log (q_n)|\leq (7+2\log(K)) q_n \;\;\; \text{ if } q_n>K \geq 1.
\end{equation*}

(ii) Let $N_n= \lceil \frac{q_{n + 1} \log_5(q_n)}{\log(q_{n})} \rceil $ and $G$ be the set 
\begin{equation*}
G=\Big\{x \;|\; \limsup_{n\to\infty} \frac{S_{N_n}^{K}(f)(x)}{N_n \log(N_n)} >1\Big\}.
\end{equation*}
Note that \footnote{Since, for big enough $n$, the distance between any two points in $\{R(x),...,R^{N}(x)\}$ is at least $\frac{1}{2q_{n+1}}$, at most one such point can be in $[0,\frac{\log_5(q_n)}{N_n \log(N_n)})\subset [0, \frac{1}{2q_{n+1}})$.}
\begin{equation*}
|S_{N_n}^K(f)(x)-S_{N_n}^K(f)(R(x))| = o(N_n \log(N_n)) \;\;\; \text{ uniformly in } x,
\end{equation*}
therefore $G=R(G)$ is invariant. By ergodicity, the claim \eqref{noncoclaim1/x} will follow once we show $\lambda(G)>0$.
 
(iii) Let $\epsilon_n=\frac{1}{q_n \log(q_n) \log_3(q_n)}$ and
\begin{equation*}
A_n=\{x\;|\; R^{j_1^{q_n}(x)}(x)\in (0,\epsilon_n)\} = \bigcup_{l=0}^{q_n -1} (0,\epsilon_n) - l\alpha,
\end{equation*}
and this union is disjoint.\footnote{Since $\epsilon_n < \frac{1}{2 q_n} < \min_{j_1,j_2\in \{0,..., q_n - 1\}, j_1 \not = j_2} |j_1 \alpha - j_2 \alpha|$.} Note that $\frac{b_n}{\epsilon_n q_{n+1}} \geq \frac{\log_3(q_n) \log_5(q_n)}{2}$. Then for $x\in A_n$
\begin{align*}
S^K_{N_n}(f)(x)-S^{b_n + 1}_{N_n}(f)(x) 
&\geq \sum_{i=K}^{b_n} \frac{1}{\epsilon_n + \frac{i}{q_{n+1}}}\geq \sum_{i=K+\lceil\epsilon_n q_{n+1}\rceil}^{b_n +\lceil\epsilon_n q_{n+1}\rceil} \frac{q_{n+1}}{i}\\
& \geq (1000K)^{-1} q_{n+1} \log \left(\frac{b_n}{\epsilon_n q_{n+1}}\right) \geq \kappa N_n \log (N_n),
\end{align*}
for some $\kappa>0$ and $n$ sufficiently large. 

By Proposition \ref{strong1/xprop} we have
\begin{equation*}
S_N^{b_n +1}(f)(x)= N \log (N) (1+o(1))\;\;\; \text{ uniformly in } x\in [0,1),
\end{equation*}
and it follows that
\begin{equation*}
G\supset \{x \;|\; x\in A_n \text{ for infinitely many } n\}.
\end{equation*}
It remains to show
\begin{equation*}
\lambda(x \;|\; x\in A_n \text{ for infinitely many } n)>0.
\end{equation*}

(iv) We will verify the conditions of Lemma \ref{reclem} for $A_n$, i.e.\ we will show
\begin{equation}\label{optsum}
\sum_{n=1}^{\infty} \lambda(A_n) = \infty
\end{equation}
and 
\begin{equation}\label{optind}
\lambda(A_i \cap A_j) \leq 2 \lambda(A_i) \lambda(A_j) \;\;\;\forall i<j.
\end{equation}
Lemma \ref{reclem} will then imply $\lambda (x \;|\; x\in A_n \text{ for infinitely many } n)>0$ and the proof will be complete.

(v) First we verify \eqref{optsum}. Indeed, the assumption $q_{n+1}<q_n \log^2(q_n)$ implies\footnote{This can be shown by induction. Recall that, by convention $q_1=1$. Now assume $q_n < 16^n (n!)^4$ for some $n$, then it holds that
\begin{align*}
q_{n+1}< 16^n (n!)^4 \log^2(16^n (n!)^4) \leq 16^n (n!)^4 (\log(16) n + 4 n \log(n))^2 \leq 16^{n+1} ((n+1)!)^4.
\end{align*}
} that $q_n < 16^n (n!)^4$, in particular $\log(q_n) < 100 n \log(n)$ and $\log_3(q_n) < 10 \log_2(n)$. It follows that
\begin{align*}
\sum_{n=1}^{\infty} \lambda(A_n) & = \sum_{n=1}^{\infty} \epsilon_n q_n = \sum_{n=1}^{\infty} \frac{1}{\log(q_n) \log_3(q_n) }
> \frac{1}{1000} \sum_{n=1}^{\infty} \frac{1}{n \log(n) \log_2(n)} =\infty.
\end{align*}

Now we verify \eqref{optind}. Let $i<j$ and, for $l_1=0,...,q_i - 1$ and $l_2=0,...,q_j - 1$, denote
\begin{equation*}
I_{l_1} = (0, \epsilon_i) - l_1 \alpha \; \text{ and } \; \tilde{I}_{l_2} = (0, \epsilon_j)  - l_2 \alpha,
\end{equation*}
additionally
\begin{equation*}
\mathcal{N}_{l_1} =\#\{l\in \{0,...,q_j -1\} \;|\; \tilde{I}_l \cap I_{l_1} \not=\emptyset\}.
\end{equation*}
Since the $\tilde{I}_l$ are disjoint we have
\begin{align*}
\mathcal{N}_{l_1}& = \#\{l\in \{0,...,q_j -1\} \;|\; - l \alpha \in I_{l_1}\} = \#\{l\in \{0,...,q_j -1\} \;|\; l \alpha \in -I_{l_1}\} =S_{q_j} (1_{-I_{l_1}})(0).
\end{align*}
\begin{equation*}
\lambda(A_i \cap A_j) \leq S_{q_j}(1_{A_i})(0) \epsilon_j \leq (q_j q_i \epsilon_i + 2) \epsilon_j \leq 2 \lambda(A_i) \lambda (A_j),
\end{equation*}
showing \eqref{optind} and completing the proof.
\end{proof}

\subsection{The function $x^{-\beta}$, $\beta>1$}

In this section, we shall provide a proof of Theorem \ref{betathm}. Let $\beta>1$ and $f(x)=x^{-\beta}$. Clearly, the strong law implies the weak law, so (I) $\implies$ (II).
The main propositions in the section are the following: We show that the strong law holds (uniformly) assuming \eqref{betacond}, i.e.\ (III) $\implies$ (I), see Proposition \ref{betastrongprop}.
Then we introduce an a priori weaker condition than \eqref{betacond} - condition (D) given in Definition \ref{def: condD}. We show in 
Proposition \ref{prop:II to D} that (II) $\implies$ condition (D) and Lemma \ref{betacondeqlem} shows that condition (D) and \eqref{betacond} are equivalent under the condition that $k(N)$ is monotone implying (II) $\implies$ (III).

We start with some preparations to prove Proposition \ref{betastrongprop}. To simplify matters, we will introduce an auxiliary sequence $K(N)=o(N)$. Instead of studying the full sum $S_N(f)$ it will be convenient to study separately\footnote{It holds that $S_N(f)=S_{N'}(f) + S_{N''}(f) \circ R^{N''}$.} the sums $S_{N'}(f)$ and $S_{N''}(f)$ with
\begin{equation*}
N'=\sum_{j=n-K(N)}^n b_j q_j, \quad \text{ and } \quad N''=N-N'=\sum_{j=1}^{n-K(N)-1} b_j q_j \leq q_{n-K(N)}.
\end{equation*}
By carefully choosing $K(N)$, the second sum $S_{N''}(f)$ is dominated by $S_{N'}(f)$, hence it suffices to study the latter.

Let $\epsilon_{\beta}>0$ be so that $\left(1+\epsilon_{\beta}\right) \frac{\beta - 1}{\beta} < 1$, and
\begin{equation*}
K(N)= \min \left(\kappa \geq 1 \;\left|\; q_{n - \kappa} \leq N k(N)^{\left(1+\epsilon_{\beta}\right) \frac{1-\beta}{\beta}} \right) \right. .
\end{equation*}
With this choice it holds that
\begin{align*}
\sup_{x\in [0,1)} |S_{N''}^{1}(f)(x)|&\leq \sup_{x\in [0,1)} |S_{q_{n-K(N)}}^{1}(f)(x)|
\leq \sum_{i=1}^{q_{n-K(N)}} q_{n-K(N)}^{\beta} i^{-\beta}\\
&= O\big(q_{n-K(N)}^{\beta} \big) =O\big(N^{\beta} k(N)^{(1+\epsilon_{\beta}) (1-\beta)}\big)= o\big(N^{\beta} k(N)^{1-\beta}\big).
\end{align*}

Since
\begin{equation}\label{betastrongthmsplit}
S_{N'}^{k(N)}(f) \leq S_N^{k(N)}(f)\leq S_{N'}^{k(N)-1}(f) + S_{N''}^{1}(f)\circ R^{N'},
\end{equation}
the strong law \eqref{betastrongcon}, with $d_N=\frac{1}{\beta-1} N^{\beta} k(N)^{1-\beta}$, will follow once 
we show\footnote{Applying the same arguments with $k(N)-1$ instead of $k(N)$ also yields
\begin{equation*}
\lim_{N\rightarrow\infty} \frac{S^{k(N)-1}_{N'}(f)(x)}{\frac{1}{\beta-1} N^{\beta} k(N)^{-\beta+1}} = 1 \;\;\; \text{uniformly in } x\in [0,1).
\end{equation*}}
\begin{equation}\label{betastrongthmclaim2}
\lim_{N\rightarrow\infty} \frac{S^{k(N)}_{N'}(f)(x)}{N^{\beta} k(N)^{-\beta+1}} = \frac{1}{\beta-1} \;\;\; \text{uniformly in } x\in [0,1).
\end{equation}

First, we claim that  
\begin{equation}\label{kncond}
\frac{k(N)}{\sum_{n-K(N)\leq j\leq n} b_j} \to \infty.
\end{equation}
Note that, if\footnote{To be slightly more precise; if $K(N)=1$ along a subsequence, then \eqref{kncond} follows trivially from \eqref{betacond} along this subsequence.} $K(N)=1$, then this follows trivially from \eqref{betacond}. Therefore we can focus on the case $K(N)>1$.

By \eqref{betacond}, the condition \eqref{kncond} is equivalent to the a priori weaker condition
\begin{equation*}
\frac{k(N)}{\sum_{n-K(N)+1 \leq j\leq n} b_j} \to \infty.
\end{equation*} 

Note that, for every $1\leq J \leq n-1$, $N$ satisfies
\begin{equation*}
N\geq b_n q_n > b_n a_{n-1} q_{n-1} > ... > b_n \prod_{j=n-J}^{n-1} a_j q_{n-J}.
\end{equation*}
Therefore, by the definition of $K(N)$ (and since $K(N)>1$), we have  
\begin{equation*}
q_{n-K(N)+1} > N k(N)^{(1+\epsilon_{\beta})\frac{1-\beta}{\beta}} > b_n \prod_{j=n-K(N)+1}^{n-1} a_j q_{n-K(N)+1} k(N)^{(1+\epsilon_{\beta})\frac{1-\beta}{\beta}}.
\end{equation*}  

Moreover, for sufficiently large $n$ and $K$, it holds that  
\begin{equation*}
q_n > \rho^{K-2} q_{n-K}, \quad \text{where } \rho = \frac{1+\sqrt{5}}{2} \text{ (the golden ratio).}
\end{equation*}  
It follows that $K(N) = o(k(n))$, and consequently,  
\begin{equation*}
\sum_{n-K(N)+1\leq j\leq n} b_j \leq b_n \prod_{j=n-K(N)+1}^{n-1} a_j + K(N) \leq k(N)^{(1+\epsilon_{\beta})\frac{\beta-1}{\beta}} + K(N) = o(k(N))
\end{equation*}  
and hence \eqref{kncond} holds.

Let $z(N,x)=R^{j^{N'}_{k(N)+1}(x)}(x)$, recall that this means that $z(N,x)$ is the $k(N)+1$st point in $\{x,...,R^{N'-1}(x)\}$ that we encounter moving from left to right from $0$ (not counting $0$ itself, if $0\in \{x,...,R^{N'-1}(x)\}$), 
then we have
\begin{equation}\label{zdef}
S^{k(N)}_{N'}(f)(x)=f(z(N,x))+\sum_{y\in \{x,...,R^{N'-1}(x)\}, y > z(N,x)} f(y).
\end{equation}
We first try to locate $z(N,x)$. To that end first rewrite
\begin{equation*}
z(N,x)=\min(\zeta\in\{x,...,R^{N'-1}(x)\} \;|\; S_{N'}(1_{(0,\zeta]})(x)\geq k(N)+1).
\end{equation*}
For $\zeta\in (0,1)$ we have 
\begin{equation*}
|S_{N'}(1_{(0,\zeta]})(x)-N'\zeta| \leq 2 \sum_{j=n-K(N)}^n b_j.
\end{equation*}
Since
\begin{equation*}
S_{N'}(1_{(0,\zeta]})(x) \in [N'\zeta - 2 \sum_{j=n-K(N)}^n b_j\, ,\, N' \zeta + 2 \sum_{j=n-K(N)}^n b_j],
\end{equation*}
it follows that 
\begin{equation*}
z(N,x)\in \left[\frac{k(N) + 1-2 \sum_{j=n-K(N)}^n b_j}{N'}, \frac{k(N) + 1 +2 \sum_{j=n-K(N)}^n b_j}{N'}\right] =: I_N. 
\end{equation*}

Denote 
\begin{equation*}
f_N(x)=\begin{cases}
f(x) & \text{if } x\in \left[\frac{k(N)+1}{N'},1\right),\\
0 & \text{otherwise}.
\end{cases}
\end{equation*}

\begin{lemma}\label{errbetalem}
For $N\in [q_n,q_{n+1}-1]$ sufficiently large
\begin{equation*}
|S_{N'}^{k(N)}(f)(x)-S_{N'}(f_N)(x)|=o(N^{\beta} k(N)^{1-\beta}) \;\;\; \text{uniformly in } x\in [0,1).
\end{equation*}
\end{lemma}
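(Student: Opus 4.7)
The plan is to compare the two sums pointwise by locating the (small) set of orbit points on which they disagree. Since $f(y)=y^{-\beta}$ is decreasing on $(0,1)$, trimming the $k(N)$ largest values of $f$ is equivalent to discarding the $k(N)$ orbit points of $\{x,Rx,\ldots,R^{N'-1}x\}$ closest to $0$, so that
\[
S_{N'}^{k(N)}(f)(x)=\sum_{\substack{0\leq i\leq N'-1\\ R^i(x)\geq z(N,x)}} f(R^i(x)),
\]
while by definition
\[
S_{N'}(f_N)(x)=\sum_{\substack{0\leq i\leq N'-1\\ R^i(x)\geq (k(N)+1)/N'}} f(R^i(x)).
\]
Since $z(N,x)\in I_N$, the absolute difference of the two sums is bounded by the sum of $f$ over those orbit points that lie in $I_N$.

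The next step is to count the orbit points of $\{x,\ldots,R^{N'-1}x\}$ that fall inside $I_N$. Applying the Denjoy--Koksma-type estimate $|S_{N'}(1_{(0,\zeta]})(x)-N'\zeta|\leq 2\sum_{j=n-K(N)}^n b_j$, which was used just before the lemma statement, once at each endpoint of $I_N$, the number of such orbit points is at most
\[
N'\cdot |I_N|+4\sum_{j=n-K(N)}^n b_j=O\Big(\sum_{j=n-K(N)}^n b_j\Big),
\]
uniformly in $x$, since $|I_N|=4\sum_{j=n-K(N)}^n b_j/N'$.

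To bound the contribution of each such point I would invoke \eqref{kncond}, which yields $\sum_{j=n-K(N)}^n b_j=o(k(N))$; in particular, for large $N$ the left endpoint of $I_N$ exceeds $k(N)/(2N')$. Therefore any orbit point $y\in I_N$ satisfies $f(y)\leq (2N'/k(N))^\beta=O(N^\beta k(N)^{-\beta})$, and multiplying by the count gives
\[
|S_{N'}^{k(N)}(f)(x)-S_{N'}(f_N)(x)|=O\Big(\sum_{j=n-K(N)}^n b_j\cdot N^\beta k(N)^{-\beta}\Big)=o\big(N^\beta k(N)^{1-\beta}\big),
\]
with the final step again a consequence of \eqref{kncond}. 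All estimates are uniform in $x$. I do not expect a serious obstacle here: the only delicate point is the equivalence between trimming and discarding the closest orbit points (a feature of the monotonicity of $f$), combined with the Denjoy--Koksma count of orbit points in $I_N$.
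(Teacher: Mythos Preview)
Your argument is correct and follows essentially the same route as the paper: both proofs recognize that the two sums differ only on orbit points lying in $I_N$, bound the number of such points by $O\big(\sum_{j=n-K(N)}^n b_j\big)$ via the Denjoy--Koksma estimate, bound each contribution by $O(N^\beta k(N)^{-\beta})$ using that the left endpoint of $I_N$ exceeds $k(N)/(2N')$ once $\sum b_j=o(k(N))$, and conclude. The paper phrases the first step by introducing the cutoff function $\tilde f_N=f\cdot 1_{[z(N,x),1)}$ so that $S_{N'}^{k(N)}(f)=S_{N'}(\tilde f_N)$, but this is the same observation you make about monotonicity of $f$.
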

\begin{proof}
For fixed $x\in [0,1)$ let $\tilde{f}_N$ be the function defined by
\begin{equation*}
\tilde{f}_N(y)=\begin{cases}
f(y) & \text{ if } y\in \left[z(N,x),1\right),\\
0 & \text{ otherwise},
\end{cases}
\end{equation*}
then, by \eqref{zdef}, we have $S_{N'}^{k(N)}(f)(x)=S_{N'}(\tilde{f}_N)(x)$. Furthermore, for $N$ sufficiently large
\begin{align*}
|S_{N'}(\tilde{f}_N)(x)-S_{N'}(f_N)(x)|
&\leq 2^{\beta} N^{\beta} k(N)^{-\beta} \#\{0\leq j\leq N'-1 \;|\; R^j(x)\in I_N\}\\
&\leq 2^{\beta} N^{\beta} k(N)^{-\beta} S_{N'}(1_{I_N})(x)\\
&\leq 1000^{\beta} N^{\beta} k(N)^{-\beta} \sum_{j=n-K(N)}^n b_j,
\end{align*}
and the claim follows since $\sum_{j=n-K(N)}^n b_j=o(k(N))$.
\end{proof}

\begin{lemma} 
It holds that
\begin{equation}\label{betaeq1}
\left| S_{N'}^{k(N)}(f)(x)- \frac{1}{\beta-1} N^{\beta} k(N)^{-\beta+1} \right| =o(N^{\beta} k(N)^{-\beta+1}) \;\;\; \text{ uniformly in } x\in [0,1).
\end{equation}
\end{lemma}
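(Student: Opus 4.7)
The plan is to reduce, via Lemma \ref{errbetalem}, the study of the trimmed sum $S_{N'}^{k(N)}(f)(x)$ to that of the untrimmed sum $S_{N'}(f_N)(x)$ of the bounded-variation function $f_N$, and then to apply the Denjoy-Koksma inequality through the Ostrowski decomposition of $N'$. The point is that the weight $\sum_{j=n-K(N)}^n b_j$ appearing in Denjoy-Koksma is $o(k(N))$ by \eqref{kncond}, and this is exactly what is needed to absorb the error into $o\bigl(N^{\beta} k(N)^{1-\beta}\bigr)$.

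Concretely, first I would compute the integral
\begin{equation*}
\int_0^1 f_N \d\lambda = \int_{(k(N)+1)/N'}^{1} x^{-\beta}\,\d x = \frac{1}{\beta-1}\left(\frac{N'}{k(N)+1}\right)^{\beta-1} - \frac{1}{\beta-1},
\end{equation*}
so that
\begin{equation*}
N' \int_0^1 f_N \d\lambda = \frac{1}{\beta-1} \frac{(N')^{\beta}}{(k(N)+1)^{\beta-1}} + O(N').
\end{equation*}
Since $N'' \leq q_{n-K(N)} \leq N k(N)^{(1+\epsilon_\beta)(1-\beta)/\beta}$, we have $N' = N(1+o(1))$ and $(N')^{\beta} = N^{\beta}(1+o(1))$. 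The error $O(N')$ is $o\bigl(N^{\beta} k(N)^{1-\beta}\bigr)$ since $k(N)=o(N)$. Thus
\begin{equation*}
N' \int_0^1 f_N \d\lambda = \frac{1}{\beta-1} N^{\beta} k(N)^{1-\beta}(1+o(1)).
\end{equation*}

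Next I would bound the total variation of $f_N$. The function $f_N$ is identically zero on $\bigl[0,(k(N)+1)/N'\bigr)$, jumps up to $f\bigl((k(N)+1)/N'\bigr) = (N'/(k(N)+1))^{\beta}$, and then decreases monotonically to $1$. Therefore
\begin{equation*}
\mathrm{Var}(f_N) \leq 2\left(\frac{N'}{k(N)+1}\right)^{\beta} = O\!\left(\frac{N^{\beta}}{k(N)^{\beta}}\right).
\end{equation*}
Applying the Denjoy-Koksma inequality to each of the $b_j$ complete orbits of length $q_j$ in the Ostrowski decomposition $N' = \sum_{j=n-K(N)}^{n} b_j q_j$, I would obtain
\begin{equation*}
\left|S_{N'}(f_N)(x) - N' \int_0^1 f_N \d\lambda \right| \leq \left(\sum_{j=n-K(N)}^{n} b_j \right) \mathrm{Var}(f_N) = o(k(N)) \cdot O\!\left(\frac{N^{\beta}}{k(N)^{\beta}}\right) = o\!\left(\frac{N^{\beta}}{k(N)^{\beta-1}}\right),
\end{equation*}
where the estimate $\sum_{j=n-K(N)}^{n} b_j = o(k(N))$ is exactly \eqref{kncond}. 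Combining this with Lemma \ref{errbetalem} and the computation of the integral yields \eqref{betaeq1}, uniformly in $x$.

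The only real obstacle is bookkeeping: ensuring that $N'$ can be replaced by $N$ in the leading term without producing an error larger than $o(N^{\beta} k(N)^{1-\beta})$, and that the additive $O(N')$ from the integral, the error from Lemma \ref{errbetalem}, and the Denjoy-Koksma error are all of the admissible order. All three boil down to using $k(N)=o(N)$ together with the crucial gain $\sum_{j=n-K(N)}^{n} b_j = o(k(N))$ coming from the tailored choice of $K(N)$.
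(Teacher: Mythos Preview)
Your proposal is correct and follows essentially the same approach as the paper: apply Denjoy--Koksma along the Ostrowski decomposition of $N'$ to control $S_{N'}(f_N)$, compute $\int f_N$, and invoke Lemma~\ref{errbetalem} together with \eqref{kncond}. The paper's proof is terser (it suppresses the explicit variation bound and the $N'\sim N$ bookkeeping), but the ingredients and logic are identical.
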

\begin{proof}
The Denjoy-Koksma inequality yields
\begin{equation*}
|S_{N'}(f_N)(x)-N'\int_0^1 f_N(y) \d y|\leq N^{\beta} k(N)^{-\beta} \sum_{j=n-K(N)}^n b_j. 
\end{equation*}
Since $\int_0^1 f_N(y) \d y = \frac{1}{\beta-1} (N^{\beta-1} k(N)^{-\beta+1}-1)$ and $\frac{N}{N'}\to1$, the claim follows via Lemma \eqref{errbetalem}. 
\end{proof}

Altogether we obtain the following.
\begin{proposition}\label{betastrongprop}
If \eqref{betacond} holds, then the strong law holds uniformly, i.e.
\begin{equation*}
\lim_{N\to \infty} \frac{S_N^{k(N)}(f)(x)}{d_N} = \frac{1}{\beta - 1} \;\;\; \text{ uniformly in } x\in [0,1).
\end{equation*}
\end{proposition}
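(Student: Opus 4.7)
My plan is to deduce Proposition \ref{betastrongprop} by squeezing $S_N^{k(N)}(f)$ via the sandwich \eqref{betastrongthmsplit},
\begin{equation*}
S_{N'}^{k(N)}(f) \leq S_N^{k(N)}(f) \leq S_{N'}^{k(N)-1}(f) + S_{N''}^{1}(f)\circ R^{N'},
\end{equation*}
and then controlling each of the three quantities with the estimates already established. The lower bound is handled immediately by \eqref{betaeq1}, which gives
\begin{equation*}
S_{N'}^{k(N)}(f)(x) = \tfrac{1}{\beta-1} N^\beta k(N)^{1-\beta} (1+o(1)) \quad \text{uniformly in } x\in[0,1).
\end{equation*}

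For the upper bound I first observe that condition \eqref{betacond} forces $k(N)\to\infty$ (since $\max_{j\leq n-1}a_j\geq a_1\geq 1$), so $(k(N)-1)^{1-\beta}\sim k(N)^{1-\beta}$. This allows me to re-run the derivation of \eqref{betaeq1} with trimming parameter $k(N)-1$ in place of $k(N)$: the only quantities that change -- the auxiliary $K(N)$ and the bound \eqref{kncond} -- absorb a $(1+o(1))$ perturbation without difficulty, so I obtain the same asymptotics for $S_{N'}^{k(N)-1}(f)(x)$. For the remaining boundary term I would recycle the trivial bound already made when $K(N)$ was selected: since $N'' \leq q_{n-K(N)}$,
\begin{equation*}
\sup_{x\in[0,1)}\bigl|S_{N''}^{1}(f)(x)\bigr| \leq \sum_{i=1}^{q_{n-K(N)}} q_{n-K(N)}^{\beta}\, i^{-\beta} = O\bigl(q_{n-K(N)}^{\beta}\bigr) = o\bigl(N^\beta k(N)^{1-\beta}\bigr),
\end{equation*}
where the last equality uses $q_{n-K(N)} \leq N k(N)^{(1+\epsilon_\beta)(1-\beta)/\beta}$ together with the strict inequality $(1+\epsilon_\beta)\tfrac{\beta-1}{\beta} < 1$. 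Substituting the three asymptotics into the sandwich and dividing by $N^\beta k(N)^{1-\beta}$ pins the limit at $\tfrac{1}{\beta-1}$, uniformly in $x$.

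I do not anticipate any real obstacle in this step, because every nontrivial ingredient -- the decomposition $N = N' + N''$, the key bound \eqref{kncond}, the deterministic localisation $z(N,x) \in I_N$, Lemma \ref{errbetalem}, and the Denjoy--Koksma approximation leading to \eqref{betaeq1} -- has already been carried out, and the proposition is purely a matter of assembly. The only subtlety is ensuring \eqref{betaeq1} transfers cleanly from trimming level $k(N)$ to $k(N)-1$, and this is automatic once one notes $k(N)\to\infty$ and that all error terms in that proof are multiplicatively stable under the perturbation $k(N)\mapsto k(N)-1$.
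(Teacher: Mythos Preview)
Your proposal is correct and follows essentially the same approach as the paper: the paper's proof simply says that by \eqref{betastrongthmsplit} it suffices to establish \eqref{betastrongthmclaim2}, which is \eqref{betaeq1}, and a footnote already records that the same argument with $k(N)-1$ in place of $k(N)$ gives the matching upper bound. Your write-up just makes these assembly steps explicit.
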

\begin{proof}
Considering \eqref{betastrongthmsplit}, it is enough to show \eqref{betastrongthmclaim2}, which follows from \eqref{betaeq1}.
\end{proof}

To complete the proof of Theorem \ref{betathm}, it remains to show that if the weak law\footnote{A priori weak or strong laws could also be possible for a different normalisation $d_N>0$ with 
\begin{equation*}
\lim_{N\to\infty} \frac{(\beta-1) d_N}{N^{\beta} k(N)^{1-\beta}} \not = 1,
\end{equation*}
so we have to take this into account in the proof.} holds and $k(N)$ is monotone, then $k(N)$ satisfies condition \eqref{betacond}.

To this end, we show that if the weak law \eqref{betaweakcon} is satisfied, then $k(N)$ satisfies the following (a priori weaker) condition.

\begin{definition}\label{def: condD}
$k(N)$ is said to satisfy condition (D), if, for any subsequence $\s{N}{l}$ satisfying $\limsup_{l\to \infty} \frac{N_l}{q_{n+1}}<1$, it holds that
\begin{equation}\label{betacond'}
\frac{k(N_l)}{b_{n}} \to \infty.
\end{equation} 
\end{definition}

\begin{lemma}\label{betacondeqlem}
Let $k(N)$ be a monotone sequence satisfying condition (D), then \eqref{betacond} holds.
\end{lemma}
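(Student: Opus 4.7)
The plan is to argue by contraposition: I would assume $k$ is monotone and that \eqref{betacond} fails, then derive a violation of condition~(D). Failure of \eqref{betacond} provides a constant $C>0$ and a subsequence $(N_l)$ with $n_l\to\infty$, $N_l\in[q_{n_l},q_{n_l+1}-1]$, satisfying
\[
k(N_l)\leq CM_l,\qquad M_l:=\max\bigl(b_{n_l},\,\max_{j\leq n_l-1}a_j\bigr).
\]
My strategy will be to attach to each $N_l$ an auxiliary integer $\tilde N_l\leq N_l$, lying in some window $[q_{\iota_l},q_{\iota_l+1}-1]$ with leading Ostrowski coefficient $\tilde b_l$, so that (i) $\tilde b_l\geq M_l/3$ and (ii) $\tilde N_l/q_{\iota_l+1}\leq 1/2$. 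Monotonicity of $k$ then gives $k(\tilde N_l)\leq k(N_l)\leq CM_l\leq 3C\tilde b_l$, while (ii) triggers condition (D) and forces $k(\tilde N_l)/\tilde b_l\to\infty$---a contradiction.

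The construction of $\tilde N_l$ will split according to the size of $M_l$. When $M_l\geq 2$, I would pick $\iota_l\in\{1,\ldots,n_l\}$ and $\beta_l\in\{b_{n_l},a_{\iota_l}\}$ that realize the maximum, noting that in both instances $\beta_l\leq a_{\iota_l}$, and set $\tilde N_l:=\lfloor\beta_l/2\rfloor q_{\iota_l}$. Then the Ostrowski expansion of $\tilde N_l$ consists of the single leading term $b_{\iota_l}(\tilde N_l)=\lfloor\beta_l/2\rfloor\geq \beta_l/3$, the inequality $\lfloor\beta_l/2\rfloor<a_{\iota_l}$ keeps the expansion admissible, and one directly checks
\[
\frac{\tilde N_l}{q_{\iota_l+1}} \leq \frac{\lfloor\beta_l/2\rfloor q_{\iota_l}}{a_{\iota_l}q_{\iota_l}} \leq \frac{1}{2}, \qquad \tilde N_l\leq\frac{q_{\iota_l+1}}{2}\leq\frac{q_{n_l}}{2}\leq N_l,
\]
so both (i) and (ii) hold and the contradiction from condition~(D) follows.

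The remaining case $M_l=1$ along a subsequence is the one I expect to be the genuine obstacle, as the halving trick is then unavailable. Here $b_{n_l}=1$ and $a_j=1$ for every $j\leq n_l-1$, and as $n_l\to\infty$ this actually pins down $\alpha=[1,1,1,\ldots]$, so the $q_n$ are the Fibonacci numbers and $q_{n-1}/q_n\to \frac{\sqrt{5}-1}{2}<1$. I would then take $\tilde N_l:=q_{n_l-1}$, which has $\iota_l=n_l-1$, $\tilde b_l=1$, and $\limsup_l \tilde N_l/q_{n_l}=\frac{\sqrt{5}-1}{2}<1$; condition~(D) yields $k(\tilde N_l)\to\infty$, contradicting the bound $k(\tilde N_l)\leq k(N_l)\leq C$ inherited from monotonicity. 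The delicate point here is precisely that from the otherwise innocuous assumption $M_l=1$ one has to extract enough structure of $\alpha$ to guarantee the ratio $q_{n_l-1}/q_{n_l}$ is bounded away from~$1$.
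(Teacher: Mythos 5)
Your contrapositive set-up and the halving trick $\tilde N_l=\lfloor\beta_l/2\rfloor q_{\iota_l}$ are sound as far as they go (indeed these $\tilde N_l$ are essentially the paper's test sequence $(q_n,2q_n,\dots,\lceil a_n/2\rceil q_n)$ in contrapositive form), but there is a genuine gap in your first case: condition (D) quantifies over \emph{subsequences} $N_1<N_2<\cdots$, i.e.\ over sequences tending to infinity, so to invoke it for $(\tilde N_l)$ you must know $\tilde N_l\to\infty$, and nothing in the case ``$M_l\geq 2$'' forces this. If, say, $a_1=5$ and $a_j=2$ for all $j\geq 2$, then for all large $l$ the maximum $M_l=5$ is attained only at $j=1$, so $\iota_l=1$ and $\tilde N_l=2q_1$ is a \emph{constant} sequence; condition (D) says nothing about it and the contradiction evaporates. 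This happens precisely when $\alpha$ is of bounded type and the maximal partial quotient is attained at only finitely many indices, so your dichotomy ``$M_l\geq 2$ versus $M_l=1$'' is not the right one: the problematic regime is ``$M_l$ bounded'' (equivalently $\sup_j a_j<\infty$), of which your golden-mean case $M_l=1$ is only a sliver. The lemma is still true there, but your proof does not reach it; you need a separate argument showing that for bounded type, (D) plus monotonicity already gives $k(N)\to\infty$, which is all that \eqref{betacond} asks when the $a_j$ are bounded. This is exactly how the paper begins: for $A=\sup_j a_j<\infty$ one applies (D) to the subsequence $(q_m)$, legitimate because $q_m/q_{m+1}\leq 1/(1+1/(A+1))<1$, to get $k(q_m)\to\infty$ and hence $k(N)\to\infty$ by monotonicity; the unbounded-type case is then handled by an argument equivalent to yours. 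Your proof becomes correct if you split instead on whether $M_l\to\infty$ (then the realizing indices $\iota_l$ must tend to infinity, so $\tilde N_l\geq q_{\iota_l}\to\infty$ and your argument applies) or $M_l$ stays bounded (then argue as above with $(q_m)$).

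Two smaller points. First, the chain $\tilde N_l\leq q_{\iota_l+1}/2\leq q_{n_l}/2\leq N_l$ is false when the maximum is realized by $b_{n_l}$, i.e.\ $\iota_l=n_l$; the conclusion $\tilde N_l\leq N_l$ still holds there, but directly via $\lfloor b_{n_l}/2\rfloor q_{n_l}\leq b_{n_l}q_{n_l}\leq N_l$, so state it that way. Second, you tacitly use that ``monotone'' means nondecreasing when writing $k(\tilde N_l)\leq k(N_l)$; that is the intended reading (the paper uses it the same way), but it is worth saying.
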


Since clearly \eqref{betacond} is the stronger condition, the lemma shows that if $k(N)$ is assumed to be monotone, then (D) is equivalent to \eqref{betacond}.

\begin{proof}
If $\alpha$ is of bounded type, then both \eqref{betacond} and \eqref{betacond'} are equivalent to $k(N)\rightarrow\infty$. So assume now that $\alpha$ is not of bounded type.

Let $\mathcal{N}:= \{n \;|\; a_n>2\}$ and consider the sequence
\begin{equation*}
\s{N}{l} = (q_n,2q_n,..., \fceil{a_n}{2} q_n \;|\; n\in \mathcal{N}),
\end{equation*}
clearly $\limsup_{l\to \infty} \frac{N_l}{q_{n+1}}\leq \frac{2}{3}$ and, per assumption, \eqref{betacond'} holds for the sequence $\s{N}{l}$.

In order to show \eqref{betacond}, let $C>0$, by \eqref{betacond'} there is an $L_0$ so big that $\frac{k(N_l)}{b_{n}}>2C$ for $l\geq L_0$. Furthermore, let $n_0$ be such that $N_{L_0} \in [q_{n_0}, q_{n_0 + 1} - 1]$. Since $\alpha$ is not of bounded type, there is an $n_1 > n_0$ such that\footnote{Implicitly $n_1\in \mathcal{N}$.} $a_{n_1} > \max_{j\leq n_0} a_j$. Now \eqref{betacond} will follow once we show that, for\footnote{Now $N$ is not necessarily a member of the sequence $\s{N}{l}$ any more.} $N> q_{n_1 + 1}$, it holds that
\begin{equation*}
\frac{k(N)}{\max (b_n , \max_{j\leq n-1} a_j)} >C.
\end{equation*}
Since $n\geq n_1 +1$ and $n_1\in \mathcal{N}$, the above is equivalent to
\begin{equation*}
\frac{k(N)}{\max (b_n , \max_{j\leq n-1, j\in \mathcal{N}} a_j)} >C.
\end{equation*}

For $j\in [n_0+1, n-1]\cap \mathcal{N}$ let $l_j\geq 1$ be such that $N_{l_j} = \fceil{a_n}{2} q_n$, and, if $n\in \mathcal{N}$, let $l_n\geq 1$ be such that $N_{l_n} =\min(b_n,\fceil{a_n}{2})$. Clearly ,
\begin{equation*}
\min_{j\in [n_0+1, n]} l_j \geq L_0  \quad \text{ and } \quad N \geq \max_{j\in [n_0+1, n]} N_{l_j}.
\end{equation*}

Since $k$ is monotone
\begin{itemize}
\item $k(N)\geq k(N_{l_{n_1}})  > C a_{n_1} > C \max_{j\leq n_0} a_j$,
\item for $j \in [n_0 + 1 , n-1]$ it holds that $k(N) \geq k(N_{l_j}) > C a_j$ since $N \geq \fceil{a_{n_1}}{2} q_{n_1}$,
\item $k(N) \geq k(N_{l_n}) > C b_n$ since $N \geq b_n q_n$.
\end{itemize}
\end{proof}

Assume now that there are $d_N>0$ such that 
\begin{equation*}
\lambda\left(\left|\frac{S_N^{k(N)}(f)}{d_N} - 1\right| > \epsilon\right) \to 0 \;\;\;\as{N},
\quad \forall\epsilon>0.
\end{equation*}
We will show that $k(N)$ satisfies condition (D). 

To this end, we will show that if $k(N)$ does not satisfy (D), then $S_N^{k(N)}(f)$ has "oscillations" of order $N^{\beta} k(N)^{1-\beta}$ (Lemma \ref{betaosclem}). To conclude, we have to ensure additionally that, along suitable subsequences $\s{N}{l}$, it holds that\footnote{This is not quite what we show, but morally speaking.} $d_{N_l} = O(N_l^{\beta} k(N_l)^{1-\beta})$.

\begin{lemma}\label{smkbetabiglem}
There is a constant $C>0$ only depending on $\beta$ such that, for $\epsilon\in (0,\frac{1}{100})$, $N \in [q_n,(1-\epsilon)q_{n+1}]$, $k\geq 0$ and $n$ big enough, we have
\begin{equation*}
\lambda\left(S_N^k(f) \leq C \epsilon^{-\beta} \min\left(N q_n^{\beta-1}, N^{\beta} k^{1-\beta}\right)\right) \geq \frac{\epsilon}{40}.
\end{equation*}
\end{lemma}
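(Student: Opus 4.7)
My plan is to exhibit a ``good set'' $A\subset [0,1)$ with $\lambda(A)\geq \epsilon/40$ on which both bounds $S_N^k(f)\leq C_\beta\epsilon^{-\beta}Nq_n^{\beta-1}$ and $S_N^k(f)\leq C_\beta\epsilon^{-\beta}N^\beta k^{1-\beta}$ hold simultaneously. Whether the cluster picture preceding Lemma~\ref{j1lem} is available depends on $b_n\delta_{n+1}$ being comfortably smaller than $\delta_n$. The condition $N\leq (1-\epsilon)q_{n+1}$ only guarantees $b_n\leq (1-\epsilon)a_n+1$ and hence $b_n\delta_{n+1}\leq (1-\epsilon)\delta_n+\delta_{n+1}$, so the proof naturally splits according to the size of $a_n$.

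\emph{Small $a_n$.} If $a_n\leq 2/\epsilon$, then $q_{n+1}/q_n\leq 3/\epsilon$, so $N\leq 3q_n/\epsilon$ and the minimum gap $\delta_{n+1}\geq 1/(2q_{n+1})\geq \epsilon/(6q_n)$ is comparable to $1/q_n$. I would take
\begin{equation*}
A=\{x: x_{\min}^N\geq (1-\epsilon/4)/N\},
\end{equation*}
whose measure is $\geq \epsilon/4$ by subadditivity applied to the preimages of $(0,(1-\epsilon/4)/N)$ under the $N$ iterates. The sorted orbit positions then satisfy $y_i\geq (1-\epsilon/4)/N+(i-1)\delta_{n+1}$, and integrating the tail of $\sum_i y_i^{-\beta}$ yields both required bounds after using $N\leq 3q_n/\epsilon$ to convert the raw estimate into the $\epsilon^{-\beta}Nq_n^{\beta-1}$ form.

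\emph{Large $a_n$.} If $a_n>2/\epsilon$, then Lemma~\ref{rotdeltalem} gives $\delta_{n+1}<(\epsilon/2)\delta_n$, whence $b_n\delta_{n+1}\leq (1-\epsilon/2)\delta_n$, leaving room for Lemma~\ref{j1lem}. Assuming $\alpha-p_n/q_n>0$ (the other case by considering $1-\alpha$), I would define
\begin{equation*}
A=\left\{x: R^{j_1^{q_n}(x)}(x)\geq \tfrac{\epsilon}{8q_n}\text{ and } R^{j_{q_n}^{q_n}(x)}(x)\leq -(1-\epsilon/2)\delta_n\right\}.
\end{equation*}
On $A$ the hypothesis of Lemma~\ref{j1lem} holds, so $j_1^N(x)=j_1^{q_n}(x)$, and the cluster estimate preceding the lemma applies with ``$\epsilon$'' parameter $\epsilon/8$. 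Splitting into $k\leq b_n$ (where the $j=0$ term of the cluster sum, giving $(\epsilon/8)^{-\beta}$, dominates) and $k>b_n$ (where the decaying $j^{-\beta}$ tail dominates) delivers both the $Nq_n^{\beta-1}$ and the $N^\beta k^{1-\beta}$ bounds.

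The main technical difficulty is the lower bound $\lambda(A)\geq \epsilon/8$ in this second case. I would use the gap structure of the $q_n$-orbit: by three-gap theory together with the identity $q_n\delta_n+q_{n-1}\delta_{n+1}=1$, the circle is partitioned into $q_n-q_{n-1}$ gaps of length $\delta_n$ and $q_{n-1}$ gaps of length $\delta_n+\delta_{n+1}$. Since $0$ is uniformly distributed within whichever gap contains it, the contributions to $\lambda(A)$ from each gap type can be written out explicitly: each ``small'' gap contributes $\geq \epsilon/(8q_n)$ and each ``big'' gap contributes $\geq (\epsilon/4)\delta_n+\delta_{n+1}$. Either type alone can be small (depending on $q_{n-1}/q_n$), but upon summing and using $\delta_n\geq 1/(2q_n)$, the pieces involving $q_{n-1}/q_n$ telescope to give $\lambda(A)\geq \epsilon/8$ uniformly in $n$. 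This cancellation is the key step of the argument.
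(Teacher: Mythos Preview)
Your argument is correct, but it takes a noticeably longer route than the paper's. The paper does \emph{not} split on the size of $a_n$; instead it defines the good set by a single condition,
\[
A=\Bigl\{x : R^{j_1^{q_n}(x)}(x)\in\bigl[\tfrac{\epsilon}{20}\delta_n,\tfrac{\epsilon}{10}\delta_n\bigr]\Bigr\}=\bigcup_{j=0}^{q_n-1}\bigl[\tfrac{\epsilon}{20}\delta_n,\tfrac{\epsilon}{10}\delta_n\bigr]-j\alpha,
\]
which is visibly a disjoint union of $q_n$ intervals of length $\tfrac{\epsilon}{20}\delta_n\geq\tfrac{\epsilon}{40q_n}$, so $\lambda(A)\geq\epsilon/40$ with no three-gap computation at all. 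The point you are missing is that once $R^{j_1^{q_n}(x)}(x)\leq\tfrac{\epsilon}{10}\delta_n$, the second condition you impose comes \emph{for free}: the gap of the $q_n$-orbit containing $0$ has length at least $\delta_n$, so automatically $R^{j_{q_n}^{q_n}(x)}(x)\leq -(1-\tfrac{\epsilon}{10})\delta_n$. This single observation replaces your entire large-$a_n$ measure argument. On $A$ the paper then bounds $S_N^k(f)$ by summing the first cluster of points separately and the remaining clusters via the $\epsilon/q_n$-shifted estimate, exactly as you do.

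What your approach buys is some extra robustness: by treating small $a_n$ via the set $\{x_{\min}^N\geq(1-\epsilon/4)/N\}$ and the raw gap bound $\delta_{n+1}\gtrsim\epsilon/q_n$, you sidestep verifying the hypothesis of Lemma~\ref{j1lem} in the regime where $b_n\delta_{n+1}$ is closest to $\delta_n$. The paper's unified argument is shorter, but the chain of inequalities it writes to invoke Lemma~\ref{j1lem} is tighter precisely when $a_n$ is small, so your case split is not unreasonable. Either way, the three-gap ``telescoping'' you describe is unnecessary once you realise that pinning down only $R^{j_1^{q_n}(x)}(x)$ already determines the measure.
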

\begin{proof}
Assume $\alpha - \frac{p_n}{q_n} > 0$, the other case can be proven analogously. Let $A\subset [0,1)$ be the set
\begin{align*}
A&=\left\{x\in [0,1) \;\left|\; R^{j_1^{q_n}(x)}(x)\in \left[\frac{\epsilon}{20} \delta_n, \frac{\epsilon}{10} \delta_n\right] \right. \right\}
=\bigcup_{j=0}^{q_n-1} \left[\frac{\epsilon}{20} \delta_n, \frac{\epsilon}{10} \delta_n\right] - j \alpha,
\end{align*}
where $\delta_n$ is given by \eqref{DefDelta}, clearly $\lambda(A)\geq \frac{\epsilon}{40}$. We have $b_n \leq (1-\epsilon) \frac{q_{n+1}}{q_n}$. Hence, by Lemma \ref{rotdeltalem}, for $x\in A$ it holds that
\begin{equation*}
R^{j_{q_n}^{q_n}(x)}(x) \leq -(1-\frac{\epsilon}{10}) \delta_n \leq -(1-\frac{\epsilon}{10}) a_n \delta_{n+1} \leq - b_n \frac{1}{q_{n+1}},
\end{equation*}
and Lemma \ref{j1lem} ascertains that $j_1^N(x)=j_1^{q_n}(x)$. By summing separately over the first cluster of points, we obtain 
\begin{equation*}
\begin{aligned}
S_N^k(f)(x)
&\leq \sum_{j=k}^{b_n+1} \left(\frac{\epsilon}{40 q_n} + \frac{j}{2q_{n+1}}\right)^{-\beta} + b_n \sum_{i=\max\left(1,\ffloor{k}{b_n}\right)}^{q_n - 1} \left( \frac{i}{2 q_n} \right)^{-\beta}\\
&\leq 100^{\beta} \epsilon^{-\beta} \chi_{k\leq b_n+1} b_n q_n^{\beta} + \frac{2^{\beta}}{\beta-1} b_n q_n^{\beta} \min(1,b_n^{\beta-1} k^{1-\beta})\\
&\leq C \epsilon^{-\beta} \min(N q_n^{\beta-1}, N^{\beta} k^{1-\beta}),
\end{aligned}
\end{equation*}
for some constant $C>0$, where
\begin{equation*}
\chi_{k\leq b_n+1} = \begin{cases}
1 & \text{ if } k\leq b_n+1\\
0 & \text{ otherwise}.
\end{cases}
\end{equation*}
\end{proof}

\begin{lemma}\label{betadnlem}
If the weak law of large numbers as in \eqref{betaweakcon} holds and $\s{N}{l}$ is a subsequence fulfilling $\limsup_{l\to \infty} \frac{N_l}{q_{n+1}}<1$, then
\begin{equation}\label{dnO}
d_{N_l} = O(\min(N_l q_n^{\beta-1} , N_l^{\beta} k(N_l)^{1-\beta})).
\end{equation}
\end{lemma}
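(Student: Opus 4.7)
The plan is to combine Lemma \ref{smkbetabiglem}, which gives a lower bound on the measure of the set where $S_N^{k(N)}(f)$ is small, with the weak law \eqref{betaweakcon}, which forces $S_N^{k(N)}(f)$ to concentrate around $d_N$ on a set of measure close to one. The two sets must intersect, yielding the desired upper bound on $d_{N_l}$.

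More concretely, by hypothesis there exists $\epsilon_0 \in (0, \tfrac{1}{100})$ and $L_0$ such that for all $l \geq L_0$ we have $N_l \leq (1 - \epsilon_0) q_{n+1}$, where $n = n(l)$ is determined by $N_l \in [q_n, q_{n+1} - 1]$. Applying Lemma \ref{smkbetabiglem} with this uniform $\epsilon = \epsilon_0$ and $k = k(N_l)$ produces, for each sufficiently large $l$, a set
\begin{equation*}
A_l := \bigl\{ x \in [0,1) \;\bigl|\; S_{N_l}^{k(N_l)}(f)(x) \leq C \epsilon_0^{-\beta} \min\bigl(N_l q_n^{\beta-1}, N_l^{\beta} k(N_l)^{1-\beta}\bigr) \bigr\}
\end{equation*}
with $\lambda(A_l) \geq \epsilon_0/40$.

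On the other hand, the weak law \eqref{betaweakcon} applied with $\epsilon = 1/2$ yields
\begin{equation*}
\lambda\left( S_{N_l}^{k(N_l)}(f) \geq \tfrac{1}{2} d_{N_l} \right) \geq 1 - \tfrac{\epsilon_0}{80}
\end{equation*}
for all sufficiently large $l$. Since $\lambda(A_l) \geq \epsilon_0/40 > \epsilon_0/80$, the two sets must have non-empty intersection, and for any $x$ in that intersection we get
\begin{equation*}
\tfrac{1}{2} d_{N_l} \leq C \epsilon_0^{-\beta} \min\bigl(N_l q_n^{\beta-1}, N_l^{\beta} k(N_l)^{1-\beta}\bigr).
\end{equation*}
Since $\epsilon_0$ depends only on the subsequence (not on $l$), this gives \eqref{dnO}.

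No part of this is really hard: Lemma \ref{smkbetabiglem} does the heavy lifting, and the crucial role of the assumption $\limsup N_l / q_{n+1} < 1$ is to allow a single $\epsilon_0$ to be used uniformly in $l$, so that the lower bound $\epsilon_0/40$ on $\lambda(A_l)$ is bounded away from zero. Finitely many initial indices $l < L_0$ can be absorbed into the implicit constant in the $O(\cdot)$-notation.
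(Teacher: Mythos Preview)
Your proof is correct and follows essentially the same approach as the paper's own proof: apply Lemma \ref{smkbetabiglem} with a uniform $\epsilon$ coming from the $\limsup$ hypothesis, and then use the weak law to force $d_{N_l}$ below the resulting bound. The paper states the final implication as ``this clearly implies \eqref{dnO}'' without spelling out the intersection argument, whereas you have made that step explicit.
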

\begin{proof}
Let $\epsilon \in (0,100^{-1})$ and $L$ be such that $\frac{N_l}{q_{n+1}}<1-\epsilon$ for $l>L$. For such $l$, Lemma \ref{smkbetabiglem} yields
\begin{equation*}
\lambda(S_{N_l}^k \leq C \epsilon^{-\beta} \min(N_l q_n^{\beta-1}, N_l^{\beta} k(N_l)^{1-\beta})) \geq \frac{\epsilon}{40}.
\end{equation*}
This clearly implies 
\begin{equation*}
d_{N_l} = O(\min(N_l q_n^{\beta-1}, N_l^{\beta} k(N_l)^{1-\beta})).
\end{equation*}
\end{proof}

\begin{remark}
A posteriori, if $k(N)$ satisfies condition (D), then 
\begin{equation*}
N_l^{\beta} k(N_l)^{1-\beta} = o (N_l q_n^{\beta -1} ),
\end{equation*}
and \eqref{dnO} becomes simply
\begin{equation*}
d_{N_l}=O(N_l^{\beta} k(N_l)^{1-\beta}).
\end{equation*} 
\end{remark}

\begin{lemma}\label{floorlem}
For $\epsilon\in (0,1)$ and $w\geq 1$ it holds that
\begin{equation*}
\lfloor (1- \epsilon) w \rfloor - 1 \leq (1-\frac{\epsilon}{2}) \lfloor w \rfloor.
\end{equation*}
\end{lemma}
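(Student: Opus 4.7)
The plan is a direct manipulation based on the two standard bounds for the floor function. Set $n := \lfloor w \rfloor$, so that $n \geq 1$ (since $w\geq 1$) and $w < n+1$. From the monotonicity of the floor, combined with $(1-\epsilon)w < (1-\epsilon)(n+1)$, I obtain
\begin{equation*}
\lfloor (1-\epsilon)w \rfloor \;\leq\; (1-\epsilon)w \;\leq\; (1-\epsilon)(n+1) \;=\; (1-\epsilon)n + (1-\epsilon).
\end{equation*}

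Subtracting $1$ on both sides yields
\begin{equation*}
\lfloor (1-\epsilon)w \rfloor - 1 \;\leq\; (1-\epsilon)n - \epsilon.
\end{equation*}
Hence the claim reduces to showing $(1-\epsilon)n - \epsilon \leq (1-\frac{\epsilon}{2})n$, i.e.\ $-\epsilon \leq \frac{\epsilon}{2}n$, which is trivial since $\epsilon>0$ and $n\geq 1$.

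There is no real obstacle: the argument is a one-line comparison of linear expressions in $n$, using only $n\geq 1$. The role of the hypothesis $w\geq 1$ is exactly to guarantee $n\geq 1$ (in fact $n\geq 0$ is already enough for the final inequality to hold, but $w\geq 1$ is the form in which the lemma will be invoked later).
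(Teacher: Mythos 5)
Your proof is correct, and it takes a genuinely different route from the paper. The paper argues by a case distinction on $w$: for $w\leq \frac{2}{\epsilon}$ it uses $\lfloor (1-\epsilon)w\rfloor -1 \leq \lfloor w\rfloor -1$ and $1-\frac{1}{w}\leq 1-\frac{\epsilon}{2}$, while for $w>\frac{2}{\epsilon}$ it writes $\lfloor w-\epsilon w\rfloor \leq \lfloor w\rfloor - \lfloor \epsilon w\rfloor$ and exploits $\lfloor \epsilon w\rfloor \geq \epsilon w -1 \geq \frac{\epsilon}{2}\lfloor w\rfloor$; note that in that second case the subtracted $1$ is not even needed. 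You instead avoid any case split: bounding $w<\lfloor w\rfloor +1$ gives $\lfloor (1-\epsilon)w\rfloor -1 \leq (1-\epsilon)\lfloor w\rfloor -\epsilon$, and the comparison with $(1-\frac{\epsilon}{2})\lfloor w\rfloor$ reduces to $-\epsilon\leq \frac{\epsilon}{2}\lfloor w\rfloor$, which is trivial. Here the subtracted $1$ is what absorbs the $(1-\epsilon)$ slack coming from replacing $w$ by $\lfloor w\rfloor+1$ (and it is genuinely needed: the inequality without the $-1$ fails, e.g.\ for $w$ slightly below $2$ and small $\epsilon$), so your use of it is exactly right. Your argument is shorter and slightly more general (as you note, $\lfloor w\rfloor\geq 0$ suffices), while the paper's version makes the two regimes, ``$w$ small'' versus ``$\epsilon w$ large'', explicit; either proof serves the later application equally well.
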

\begin{proof}
If $w\leq \frac{2}{\epsilon}$, then
\begin{equation*}
\frac{\lfloor (1- \epsilon) w \rfloor - 1}{\lfloor w \rfloor} \leq 1 - \frac{1}{w} \leq 1 - \frac{\epsilon }{2}.
\end{equation*}
On the other hand, if $w > \frac{2}{\epsilon}$
\begin{equation*}
\lfloor (1- \epsilon) w \rfloor = \lfloor w - \epsilon w \rfloor \leq \lfloor w \rfloor - \lfloor \epsilon w \rfloor,
\end{equation*}
and
\begin{equation*}
\lfloor \epsilon w \rfloor \geq \epsilon w - 1 \geq (1-\frac{1}{2}) \epsilon w \geq \frac{\epsilon}{2} \lfloor w \rfloor.
\end{equation*}
It follows that
\begin{equation*}
\lfloor (1- \epsilon) w \rfloor \leq (1-\frac{\epsilon}{2}) \lfloor w \rfloor.
\end{equation*}
\end{proof}

\begin{lemma}\label{betaosclem}
There is a constant $c>0$ such that, for $\epsilon\in (0,\frac{1}{100})$ and $N \in [q_n,(1-\epsilon)q_{n+1}]$, there are sets $A,B$ with $\lambda(A)=\lambda(B)\geq \frac{\epsilon^2}{1000}$ such that
\begin{equation}\label{rotbetanotweakclaim1}
S_{N}(f)(x)-S_{N}(f)(y)> c \epsilon N q_n^{\beta-1} \;\;\;\forall x\in A, y\in B.
\end{equation}
Furthermore, if $k= \hat{k} \frac{N}{q_n} \in \N$ then
\begin{equation}\label{rotbetanotweakclaim2}
S_{N}^k(f)(x)-S_{N}^k(f)(y)> c \epsilon \min\left(N q_n^{\beta - 1} ,\hat{k}^{-2} N^{\beta} k^{1-\beta} \right) \;\;\;\forall x\in A, y\in B.
\end{equation}
\end{lemma}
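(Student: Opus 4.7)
The proof adapts the construction of Lemma~\ref{weak1/xnotlem} to the power singularity $f(x)=x^{-\beta}$. By the symmetry $\alpha\leftrightarrow 1-\alpha$ we may restrict to the case $\alpha-p_n/q_n>0$, so that $R^{q_n}$ acts as the rigid shift $+\delta_{n+1}$ on $\T$. Both $A$ and $B$ will be disjoint unions of $q_n$ translates of short intervals that constrain the location of the leftmost orbit point $R^{j_{q_n}^{q_n}(x)}(x)$ among the first $q_n$ iterates. The essential structural input is the inequality $b_n\delta_{n+1}\leq(1-\epsilon/2)\delta_n$, which follows from the assumption $N\leq(1-\epsilon)q_{n+1}$ via Lemma~\ref{rotdeltalem}.

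Writing the leftmost as $-d$ and decomposing $d=m\delta_{n+1}+r$ with $m\in\{0,\ldots,b_n-1\}$ and $r\in[0,\delta_{n+1})$, the orbit point at iteration $(m+1)q_n$ lies at distance $\delta_{n+1}-r$ on the positive side of $0$. For $A$, I would require $r$ in the narrow window $(\delta_{n+1}(1-\epsilon/C),\delta_{n+1})$ for a large constant $C$, with $m$ allowed to vary, producing an orbit point at distance at most $\epsilon\delta_{n+1}/C$ from $0$ and hence a sum contribution $\geq c_\beta\epsilon^{-\beta}q_{n+1}^\beta$. For $B$, I would require $r\in[\delta_{n+1}/3,2\delta_{n+1}/3]$, so that every orbit point produced from this cluster stays at distance $\geq\delta_{n+1}/3$ from $0$. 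Taking unions over the $q_n$ translates gives $\lambda(A),\lambda(B)\geq\epsilon^2/1000$; the corner case $N=q_n$ (when $b_n=1$) is handled by a direct variant using the closest positive orbit point $R^{j_1^{q_n}(x)}(x)$ instead.

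Summing $S_N(f)(y)$ cluster by cluster for $y\in B$: the close cluster at distance $\geq\delta_{n+1}/3\sim 1/q_{n+1}$ contributes at most $C_\beta q_{n+1}^\beta$ by the convergent series $\sum i^{-\beta}$, while the $\ell$-th further cluster (for $\ell=1,\ldots,q_n-1$) lies at distance $\geq c\ell/q_n$ and contains at most $b_n+1$ points, with total contribution at most $C_\beta b_n q_n^\beta=C_\beta Nq_n^{\beta-1}$. Using $q_{n+1}^\beta\geq q_{n+1}q_n^{\beta-1}\geq Nq_n^{\beta-1}$, I would then conclude
\[
S_N(f)(x)-S_N(f)(y)\geq (c_\beta\epsilon^{-\beta}-C_\beta)q_{n+1}^\beta-C_\beta Nq_n^{\beta-1}\geq c_\beta\epsilon Nq_n^{\beta-1}
\]
for $\epsilon$ sufficiently small, proving the first claim.

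For the trimmed claim~\eqref{rotbetanotweakclaim2}, I would refine $A$ so that the close cluster contains approximately $b_n$ points at approximate distances $\delta_{n+1},2\delta_{n+1},\ldots,b_n\delta_{n+1}$ from $0$; trimming the top $k$ leaves the $(k+1)$-th closest at distance $\sim(k+1)\delta_{n+1}$, and the remaining close-cluster sum $\sum_{i=k+1}^{b_n}(i\delta_{n+1})^{-\beta}$ is at least $c_\beta q_{n+1}^\beta k^{1-\beta}$. Using $\hat k=kq_n/N$ together with $q_{n+1}\geq N$, this can be shown to match $\hat k^{-2}N^\beta k^{1-\beta}$ in the regime $\hat k\geq 1$ and to exceed $Nq_n^{\beta-1}$ when $\hat k<1$ (the close cluster being largely untouched by trimming). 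Since trimming only decreases $S_N^k(f)(y)$ for $y\in B$, the upper bound on $B$ persists. The hard part is the delicate balancing of $m,r,C$ in the construction to simultaneously guarantee the measure lower bound $\epsilon^2/1000$ and the cluster-structure bounds across every admissible combination of $N\in[q_n,(1-\epsilon)q_{n+1}]$ and $a_n$, together with verifying the algebraic identification of $q_{n+1}^\beta k^{1-\beta}$ with the $\hat k$-expression in the two regimes.
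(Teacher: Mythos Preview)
Your construction differs fundamentally from the paper's and has two genuine gaps.

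\textbf{Gap 1 (measure).} Your set $A$ requires the leftmost point $R^{j_{q_n}^{q_n}(x)}(x)=-d$ to satisfy $d=m\delta_{n+1}+r$ with $m\leq b_n-1$ and $r$ in a window of width $\epsilon\delta_{n+1}/C$. But $d$ ranges over an interval of length $\sim\delta_n\approx a_n\delta_{n+1}$, so the proportion of $x$ meeting your constraint is only $\sim b_n(\epsilon/C)/a_n$. When $N$ is near $q_n$ (so $b_n$ is small while $a_n$ may be arbitrarily large) this is far below $\epsilon^2/1000$; the issue is not confined to the corner case $b_n=1$ but persists throughout the range $N\ll\epsilon q_{n+1}$. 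The same defect afflicts $B$. Working instead with $R^{j_1^{q_n}(x)}(x)$ close to $0$ would fix the measure but then the single close point contributes only $\sim q_n^\beta$, which cannot beat the upper bound $\sim Nq_n^{\beta-1}$ on $B$ when $a_n$ is large.

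\textbf{Gap 2 (trimmed case, $\hat k\gtrsim 1$).} Your lower bound for $x\in A$ comes entirely from the close cluster, which contains at most $b_n+1$ points. Once $k\geq b_n$ (i.e.\ $\hat k\gtrsim 1$) the entire close cluster is trimmed and your sum $\sum_{i=k+1}^{b_n}(i\delta_{n+1})^{-\beta}$ is empty. After that, the surviving orbit structure for $x\in A$ and $y\in B$ agrees at leading order (the next cluster sits at distance $\sim\delta_n$ in both cases), so your construction provides no mechanism for the claimed gap $\hat k^{-2}N^\beta k^{1-\beta}$.

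The paper avoids both problems by a \emph{translation} argument rather than a ``close point versus far point'' dichotomy. It fixes a single set $A'=\{x:R^{j_1^{q_n}(x)}(x)\in(0,\tfrac{\epsilon}{10}\delta_n),\ j_1^{q_n}(x)<q_n-q_{n-1}\}$, whose measure $\geq\epsilon^2/40$ is independent of $b_n$. For $x\in A'$ one checks that the small shift $x\mapsto x+\tfrac{\epsilon}{4}\delta_n$ preserves the ordering $j_i^N$ of all $N$ orbit points, so $S_N^k(f)(x)-S_N^k(f)(x+\tfrac{\epsilon}{4}\delta_n)$ is a sum of nonnegative terms $f(z)-f(z+\tfrac{\epsilon}{4}\delta_n)$. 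Retaining just the $b_n$ terms from the $(\lceil\hat k\rceil+1)$-th cluster (located at distance $\sim(\hat k+2)/q_n$) and using a mean-value estimate already yields $\gtrsim\epsilon b_n q_n^\beta(\hat k+3)^{-\beta-1}$, which matches the target for every $\hat k$. The sets $A,B$ are then extracted from $A'$ and $A'+\tfrac{\epsilon}{4}\delta_n$ via a median-splitting trick. The conceptual point is that the paper exploits the \emph{derivative} of $f$ at a controlled, $\hat k$-dependent distance from the singularity rather than proximity to the singularity itself, which is exactly why the estimate survives trimming uniformly in $\hat k$ and is insensitive to the ratio $b_n/a_n$.
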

\begin{proof}
We focus on \eqref{rotbetanotweakclaim2}, \eqref{rotbetanotweakclaim1} can be shown using almost the same arguments. In the following we assume $\alpha-\frac{p_n}{q_n}>0$, the other case follows by considering $1-\alpha$ instead.

Let
\begin{align*}
A'&=\{x\in (0,1] \;|\; R^{j_1^{q_n}(x)}(x)\in (0,\frac{\epsilon}{10} \delta_n) \; \text{ and } \; j_1^{q_n}(x)\in \{0,..., q_n - q_{n-1} - 1\}\}\\
&=\bigcup_{j=0}^{q_n-q_{n-1} - 1} (0,\frac{\epsilon}{10} \delta_n) - j \alpha.
\end{align*}
Since $\delta_n\geq\frac{1}{2 q_n}$, it holds that\footnote{Strictly speaking this is only true if $(1-\frac{\epsilon}{2}) q_n> q_{n-1}$. If $(1-\frac{\epsilon}{2}) q_n\leq q_{n-1}$, then a similar proof with the set
\begin{align*}
A'&=\{x\in (0,1] \;|\; R^{j_1^{q_n}(x)}(x)\in (0,\frac{\epsilon}{20} \delta_{n-1})\}
=\bigcup_{j=0}^{q_n - 1} (0,\frac{\epsilon}{20} \delta_{n-1}) - j \alpha
\end{align*}
leads to the same conclusion.
} $\lambda(A') \geq \frac{\epsilon^2}{40}$. For $x\in A'$ we have
\begin{equation*}
R^{j_{q_n}^{q_n}(x)}(x)<R^{j_{q_n}^{q_n}(x)+q_n}(x)<...< R^{j_{q_n}^{q_n}(x) + (b_n-1) q_n} (x) < R^{j_1^{q_n}(x)}(x),
\end{equation*}
and no other points\footnote{The only candidates for points between $R^{j_{q_n}^{q_n}(x)}(x)$ and $R^{j_{1}^{q_n}(x)}(x)$ are
\begin{equation*}
R^{j_{q_n}^{q_n}(x)}(x)<R^{j_{q_n}^{q_n}(x)+q_n}(x)<...< R^{j_{q_n}^{q_n}(x) + b_n q_n} (x) < R^{j_1^{q_n}(x)}(x).
\end{equation*}
However the assumption $j_1^{q_n}(x) \leq q_n - q_{n-1} - 1$, or equivalently $j_{q_n}^{q_n} \geq q_{n-1}$, ensures that the point $R^{j_{q_n}^{q_n}(x) + b_n q_n} (x)$ is not present in $\{x,...,R^{N-1}(x)\}$.} of $\{x,...,R^{N-1}(x)\}$ are between $R^{j_{q_n}^{q_n}(x)}(x)$ and $R^{j_{1}^{q_n}(x)}(x)$. Using Lemma \ref{floorlem}, we have
\begin{equation*}
b_n - 1 \leq \ffloor{(1-\epsilon)q_{n+1}}{q_n} - 1 \leq \Big(1-\frac{\epsilon}{2}\Big) \ffloor{q_{n+1}}{q_n} = \Big(1-\frac{\epsilon}{2}\Big) a_n,
\end{equation*} 
in addition Lemma \ref{rotdeltalem} yields $a_n \delta_{n+1} \geq \delta_n$ and therefore
\begin{align*}
R^{j_{q_n}^{q_n}(x) + (b_n-1) q_n} (x) &\leq -\Big(1-\frac{\epsilon}{10}\Big) \delta_n + (b_n-1) \delta_{n+1}
 \leq -\Big(1-\frac{\epsilon}{10}\Big) \delta_n + \Big(1-\frac{\epsilon}{2}\Big) a_n \delta_{n+1}
\leq -\frac{\epsilon}{4} \delta_n.
\end{align*}
It follows that $j_1^N(x)=j_1^{q_n}(x)$ and $j_i^N(x)=j_i^N(x+\frac{\epsilon}{4}\delta_n)$ for all $i=1,...,N$. Writing 
\begin{equation*}
S_{N}^k(f)(x)-S_{N}^k(f)(x+\frac{\epsilon}{4}\delta_n) \geq \sum_{l=\lceil \hat{k} \rceil +1}^{q_n} \sum_{i=0}^{b_n-1} \left(f\left(R^{j_l^{q_n}(x)+i q_n}(x)\right) - f\left(R^{j_l^{q_n}(x)+i q_n}(x) +\frac{\epsilon}{4}\delta_n\right) \right),
\end{equation*}
we notice that all the summands are positive, hence we can bound below by taking only the terms where $l=\lceil \hat{k} \rceil +1$. Furthermore, all of the points $R^{j_{\lceil \hat{k}\rceil+1}^{q_n}(x)+i q_n}(x)$ are to the left of $R^{j_{\lceil \hat{k} \rceil +2}^{q_n}(x)}(x)$, therefore we again bound from below by replacing them all with this point. Therefore
\begin{equation}\label{betatranineq}
\begin{aligned}
S_{N}^k(f)(x)-S_{N}^k(f)(x+\frac{\epsilon}{4}\delta_n)
&\geq b_n \left(f\left(R^{j_{\lceil \hat{k} \rceil +2}^{q_n}(x)}(x)\right) - f\left(R^{j_{\lceil \hat{k}\rceil +2}^{q_n}(x)}(x) +\frac{\epsilon}{4}\delta_n\right) \right)\\
&\geq b_n \left( \left(\frac{\hat{k}+3}{q_n}\right)^{-\beta} -  \left(\frac{\hat{k}+3}{q_n}+\frac{\epsilon}{8q_n} \right)^{-\beta}\right)\\
&\geq c \epsilon \min(1,\hat{k}^{-\beta-1}) b_n  q_n^{\beta} \geq c \epsilon \min\left(N q_n^{\beta - 1} ,\hat{k}^{-2} N^{\beta} k^{1-\beta} \right) ,
\end{aligned}
\end{equation}
for a constant $c>0$.

We distinguish two cases:

(i) If there is an $s$ such that $\lambda(x\in A' \;|\; S_{N}^k(f)(x)=s )\geq \frac{1}{3} \lambda(A')$, we set
\begin{equation*}
A= \{x\in A' \;|\; S_{N}^k(f)(x)=s\} \; \text{ and } \; B=A + \frac{\epsilon}{4} \delta_n.
\end{equation*}
Then, for $x\in A,y\in B$, it holds that
\begin{equation*}
S_{N}^k(f)(x)-S_{N}^k(f)(y)=S_{N}^k(f)(y-\frac{\epsilon}{4} \delta_n)-S_{N}^k(f)(y) \overset{\eqref{betatranineq}}{>} c \epsilon \min\left(N q_n^{\beta - 1} ,\hat{k}^{-2} N^{\beta} k^{1-\beta} \right),
\end{equation*}
where in the first equality we used the fact that $S_N^k(f)$ is constant on $A$.

(ii) Otherwise, if there is no such $s$, let
\begin{equation*}
\mathcal{S} = \left\{s>0 \;\left|\; \lambda(x\in A' \;|\; S_{N}^k(f)(x)\leq s )\geq \frac{1}{3} \lambda(A')\right. \right\} \; \text{ and } \; s_0=\inf \mathcal{S}.
\end{equation*}
Then, for\footnote{Clearly $\mathcal{S}\not=\emptyset$ because $S_{N}^k(f)(x)<100^{\beta} \epsilon^{-\beta} N q_n^{\beta}$ whenever $x_{\min}^N>\frac{\epsilon}{10}\delta_n$.}
\begin{equation*}
A=\{x\in A' \;|\; S_{N}^k(f)(x)> s_0\} \; \text{ and } \; B=\{x\in A' \;|\; S_{N}^k(f)(x) \leq s_0\} + \frac{\epsilon}{4} \delta_n,
\end{equation*}
it holds that, for $x\in A,y\in B$, we have
\begin{equation*}
S_{N}^k(f)(x)-S_{N}^k(f)(y)>S_{N}^k(f)(y-\frac{\epsilon}{4} \delta_n)-S_{N}^k(f)(y) \overset{\eqref{betatranineq}}{>} c \epsilon \min\left(N q_n^{\beta - 1 },\hat{k}^{-2} N^{\beta} k^{1-\beta} \right),
\end{equation*}
where we used that $S_{N}^k(f)(x)>s_0$ by definition of $A$, and $S_{N}^k(f)(y-\frac{\epsilon}{4} \delta_n) \leq s_0$ by definition of $B$.

Furthermore, by continuity of $\lambda$, we have
\begin{equation*}
\lambda(B)=\lambda\left(\bigcap_{s\in \mathcal{S}} \{x\in A' \;|\; S_{N}^k(f)(x)\leq s\} \right) \geq \frac{1}{3} \lambda(A'). 
\end{equation*}
In order to estimate $\lambda(A)$, we distinguish two possibilities
\begin{itemize}
\item if $s_0\not \in \mathcal{S}$, then by definition it holds that
\begin{equation*}
\lambda(A)=\lambda(x\in A' \;|\; S_N^k(f)(x) > s_0) \geq \frac{2}{3} \lambda(A'),
\end{equation*}
\item if $s_0\in \mathcal{S}$, then
\begin{align*}
\lambda(A)&=\lambda(x\in A' \;|\; S_N^k(f)(x) > s_0)\\
&=\lambda\bigg( \bigcup_{s\not \in \mathcal{S}} \{x\in A' \;|\; S_N^k(f)(x) > s\} \bigg) - \lambda\left(x\in A' \;|\; S_N^k(f)(x)=s_0\right)\geq \frac{1}{3} \lambda(A').
\end{align*}
\end{itemize}
\end{proof}

\begin{proposition}\label{prop:II to D}
 If there are some $d_N>0$ such that 
\begin{equation*}
\lambda\left(\left|\frac{S_N^{k(N)}(f)}{d_N} - 1\right| > \epsilon\right) \to 0 \;\;\;\as{N},
\quad \forall\epsilon>0,
\end{equation*}
then $k(N)$ fulfils Property D. 
\end{proposition}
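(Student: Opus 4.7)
The plan is to argue by contradiction, combining the oscillation estimate of Lemma \ref{betaosclem} with the upper bound on the normalising constants from Lemma \ref{betadnlem}. Suppose condition (D) fails: there is a subsequence $(N_l)$ with $\limsup_{l\to\infty} N_l/q_{n+1} < 1$ along which $k(N_l)/b_n$ stays bounded. By passing to a further subsequence I may assume $k(N_l) \leq M b_n$ for some constant $M \geq 1$, and that $N_l \leq (1-\epsilon_0) q_{n+1}$ for some fixed $\epsilon_0 \in (0, 1/100)$ and all $l$ large. Here $n = n(l)$ is the index with $N_l \in [q_n, q_{n+1}-1]$.

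I then apply Lemma \ref{betaosclem} with $N = N_l$, $k = k(N_l)$, $\epsilon = \epsilon_0$, and the choice $\hat{k} := k(N_l) q_n/N_l$. By construction $\hat{k} N_l/q_n = k(N_l) \in \N$, and $N_l \geq b_n q_n$ gives $\hat{k} \leq k(N_l)/b_n \leq M$. The lemma produces sets $A_l, B_l$ of Lebesgue measure at least $\epsilon_0^2/1000$ on which the values of $S_{N_l}^{k(N_l)}(f)$ differ by at least $c\epsilon_0 \min\bigl(N_l q_n^{\beta-1},\, \hat{k}^{-2} N_l^\beta k(N_l)^{1-\beta}\bigr)$. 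Using $k(N_l) \leq M N_l/q_n$ to obtain $N_l^\beta k(N_l)^{1-\beta} \geq M^{1-\beta} N_l q_n^{\beta-1}$, together with $\hat{k}^{-2} \geq M^{-2}$, this minimum is bounded below by $c_1 N_l q_n^{\beta-1}$ for a positive constant $c_1 = c_1(\epsilon_0, M, \beta)$.

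To close the argument I invoke Lemma \ref{betadnlem} for the subsequence $(N_l)$: since the weak law is assumed, $d_{N_l} \leq C N_l q_n^{\beta-1}$ for some $C > 0$. Hence the oscillation on $A_l, B_l$ is at least $c_0 d_{N_l}$ with $c_0 := c_1/C > 0$. But the weak law, applied with $\delta := c_0/4$, implies that for all $l$ sufficiently large the exceptional set $\{|S_{N_l}^{k(N_l)}(f)/d_{N_l} - 1| > \delta\}$ has measure strictly smaller than $\min(\lambda(A_l), \lambda(B_l))/2$. Picking $x \in A_l$ and $y \in B_l$ both outside this exceptional set gives $|S_{N_l}^{k(N_l)}(f)(x) - S_{N_l}^{k(N_l)}(f)(y)| \leq 2\delta d_{N_l} = c_0 d_{N_l}/2$, contradicting the lower bound $\geq c_0 d_{N_l}$.

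The main obstacle I anticipate is matching parameters in Lemma \ref{betaosclem}: its conclusion is phrased via $\hat{k}$ and requires the divisibility $\hat{k} N/q_n \in \N$, whereas the negated hypothesis only provides information about $k(N_l)/b_n$. The identification $\hat{k} = k(N_l) q_n/N_l$ handles both simultaneously, and the ratio $N_l/(b_n q_n)$ lies in $[1,2)$ so no loss of constants occurs. Once this bookkeeping is in place, the rest is just combining the two auxiliary lemmas with the assumed weak law.
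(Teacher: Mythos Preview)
Your proposal is correct and follows essentially the same route as the paper: assume (D) fails along a subsequence, use Lemma~\ref{betadnlem} to bound $d_{N_l}$ from above, and Lemma~\ref{betaosclem} (with $\hat{k} = k(N_l) q_n / N_l$ bounded) to produce oscillations of comparable size on sets of definite measure, contradicting the weak law. The only minor omission is the edge case $k(N_l)=0$, where $\hat{k}^{-2}$ is undefined and one must invoke \eqref{rotbetanotweakclaim1} instead of \eqref{rotbetanotweakclaim2}; the paper handles this in a footnote and it does not affect your argument.
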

\begin{proof}
 We assume weak convergence as in \eqref{betaweakcon} holds and let $\s{N}{l}$ be a subsequence with $\lim_{l\to\infty} \frac{N_l}{q_{n+1}} < 1$, say $\lim_{l\to\infty} \frac{N_l}{q_{n+1}} = 1-\epsilon$ for some small $\epsilon>0$. By Lemma \ref{betadnlem}, we have 
\begin{equation}\label{betanotthmdm}
d_{N_l} = O(\min(N_l q_n^{\beta-1}, N_l^{\beta} k(N_l)^{1-\beta})).
\end{equation}
We claim that necessarily $\frac{k(N_l)}{b_n}\to\infty$, equivalently $\frac{k(N_l) q_n}{N_l} \to \infty$. If not, then, by Lemma\footnote{A priori it might also be possible that $k(N_l)=0$, in this case, use \eqref{rotbetanotweakclaim1}. Otherwise, we use \eqref{rotbetanotweakclaim2}.} \ref{betaosclem}, there are sets $A_{N_l},B_{N_l}$ with $\lambda(A_{N_l})=\lambda(B_{N_l})\geq \frac{\epsilon^2}{1000}$ with
\begin{equation*}
S_{N_l}^{k(N_l)}(f)(x)-S_{N_l}^{k(N_l)}(f)(y)> \tilde{c} \min\left(N_l q_{n_l}^{\beta - 1} ,N_l^{\beta} k(N_l)^{1-\beta} \right), \;\;\;\forall x\in A_{N_l}, y\in B_{N_l},
\end{equation*}
for a constant\footnote{The dependence on $\epsilon$ and "$\hat{k}$" as in Lemma \ref{betaosclem} can be absorbed into $\tilde{c}$. This is doesn't cause any problems since $\hat{k}=\frac{k(N_l) q_n}{N_l}$ is bounded by assumption.} $\tilde{c}>0$ which depends on $\epsilon$ but not on $l$. In light of \eqref{betanotthmdm}, this contradicts \eqref{betaweakcon}.
\end{proof}

Finally, we are in a position to give the full proof of Theorem \ref{betathm}.

\begin{proof}[Proof of Theorem \ref{betathm}]
Clearly, strong convergence implies weak convergence, therefore (I) $\implies$ (II). Additionally, if $k(N)$ satisfies condition \eqref{betacond}, then Proposition \ref{betastrongprop} shows that the strong law holds uniformly, so (III) $\implies$ (I). 
Moreover, by Proposition \ref{prop:II to D} we have that (II) $\implies$ condition (D) and according to Lemma \ref{betacondeqlem}, if $k(N)$ is monotone, condition (D) is equivalent to \eqref{betacond}.
\end{proof}

\providecommand{\bysame}{\leavevmode\hbox to3em{\hrulefill}\thinspace}
\providecommand{\MR}{\relax\ifhmode\unskip\space\fi MR }
\providecommand{\MRhref}[2]{%
  \href{http://www.ams.org/mathscinet-getitem?mr=#1}{#2}
}
\providecommand{\href}[2]{#2}

\end{document}